\documentclass[reqno,10pt,a4paper]{amsart}

\usepackage{amsmath}
\usepackage{amsthm}
\usepackage{amssymb}
\usepackage[mathscr]{eucal}
\usepackage{mathtools}
\usepackage{microtype}
\usepackage[T1]{fontenc}
\usepackage{lmodern} % was fourier
\usepackage[colorlinks=true,linkcolor=black,citecolor=black]{hyperref}
\usepackage[colorinlistoftodos,textsize=tiny]{todonotes}

% Math definitions

\newtheorem{thm}{Theorem}[section]
\newtheorem{prop}[thm]{Proposition}
\newtheorem{lemma}[thm]{Lemma}
\newtheorem{cor}[thm]{Corollary}
\theoremstyle{definition}

\newtheorem{remark}[thm]{Remark}

\numberwithin{equation}{section}

\DeclareMathOperator{\kap}{Cap}
\DeclareMathOperator{\Ker}{Ker}
\DeclareMathOperator{\ind}{ind}
\DeclareMathOperator{\Real}{Re}

\let\epsilon=\varepsilon
\let\phi=\varphi

\def\R{\mathbb{R}}
\def\Z{\mathbb{Z}}
\def\N{\mathbb{N}}
\def\Nzero{\N\setminus\{0\}}

\newcommand{\Ab}{\mathbf{A}}

\def\l{\mathfrak{l}}

\def\RDK{T^{R\to D}_K}
\def\RDO{T^{R\to D}_\Omega}
\def\DRK{T^{D\to R}_K}
\def\DRO{T^{D\to R}_\Omega}
\def\DRq{T^{D\to R}_q}

% Title

\title[Clusters for the magnetic Laplacian
with Robin condition]%
{Clusters of eigenvalues for the magnetic Laplacian
with Robin condition}
\author{Magnus Goffeng}
\author{Ayman Kachmar}
\author{Mikael Persson Sundqvist}
\email[M. Goffeng]{goffeng@math.uni-hannover.de}
\email[A. Kachmar]{ayman.kashmar@liu.edu.lb}
\email[M. Persson Sundqvist]{mickep@maths.lth.se}
\subjclass[2010]{81Q10; 35PXX, 47B25}
\keywords{Eigenvalue asymptotics, Landau levels, Boundary conditions, 
Magnetic field}
\begin{document}

\begin{abstract}
We study the Schr\"odinger operator with a constant magnetic field in the 
exterior of a compact domain in euclidean space. Functions in the domain of 
the operator are subject to  a boundary condition of the third type (a magnetic
Robin condition). In addition to the Landau levels, we obtain that the spectrum 
of this operator consists of clusters of eigenvalues around the Landau levels 
and that they do accumulate to the Landau levels from below. We give a precise 
asymptotic formula for the rate of accumulation of eigenvalues in these 
clusters, which is independent of the boundary condition.
\end{abstract}

\maketitle

\makeatletter
\providecommand\@dotsep{5}
\makeatother
%\begin{small}
%\listoftodos\relax
%\end{small}

\section{Introduction}
Magnetic Schr\"odinger operators in domains with boundaries appear
in several areas of physics. E.g. the Ginzburg--Landau
theory of superconductors, the theory of Bose--Einstein condensates,
and of course the study of edge states in Quantum mechanics. We
refer the reader to \cite{AfHe, FH, hosm} for details and additional
references on the subject. From the point of view of spectral
theory, the presence of boundaries has an effect similar to that of
perturbing the magnetic Schr\"odinger operator by an electric
potential. In both cases, the essential spectrum
consists of the Landau levels and the discrete spectrum form
clusters of eigenvalues around the Landau levels. Several papers are
devoted to the study of different aspects of these clusters of
eigenvalues in domains with or without boundaries. For results in the 
semi-classical context, see~\cite{FK, Fr, HM, Kjmp, Krmp}. In case of
domains with boundaries, see~\cite{P, puro}.

Let us consider a \emph{compact} domain
$K\subset\R^{2d}$ with Lipschitz boundary. Let us
denote by $K^\circ$ the interior of $K$, $\Omega=\R^{2d}\setminus K$ and 
$\partial\Omega$ the common 
boundary of $\Omega$ and $K$. Given a real valued function 
$\tau\in L^\infty(\partial\Omega,\R)$ and a positive constant $b$ (the 
intensity of the magnetic field), we define the Schr\"odinger operator
$L_{\Omega,b}^\tau$ with domain $D(L_{\Omega,b}^\tau)$ as
follows,
\begin{align}
D\bigl(L_{\Omega,b}^\tau\bigr)&=\bigl\{u\in L^2(\Omega)~:~(\nabla-ib\Ab_0)^ju\in
L^2(\Omega),~j=1,2;\nonumber\\ 
&\qquad\qquad\qquad\qquad\nu_\Omega\cdot(\nabla-ib\Ab_0)u+\tau
u=0\quad\text{on }\partial\Omega\bigr\}\,,\label{D-L}\\
L_{\Omega,b}^\tau u
&=-(\nabla-ib\Ab_0)^2u\quad\forall~u\in
D\bigl(L_{\Omega,b}^\tau\bigr)\,.\label{Op-L}
\end{align}
Here, $\Ab_0$ is the magnetic potential in the symmetric gauge defined by
\begin{equation}\label{mp-A0}
\Ab_0(x_1,x_2,\ldots, x_{2d})=\tfrac12(-x_2,x_1,\ldots, -x_{2d},x_{2d-1})\,,
\end{equation}
and $\nu_\Omega$ is the unit {\it outward} normal vector of the
boundary $\partial\Omega$. We also introduce the boundary Neumann and Robin 
differential notations
\begin{equation}
\label{eq:dNdR}
\partial_N = \nu_\Omega\cdot(\nabla-i b\Ab_0),\quad\text{and}\quad
\partial_R = \partial_N+\tau=\nu_\Omega\cdot(\nabla-i b\Ab_0)+\tau.
\end{equation}
The operator $L_{\Omega,b}^\tau$ is actually the Friedrich's
self-adjoint extension in $L^2(\Omega)$ associated with the
semi-bounded quadratic form
\begin{equation}\label{QF-q}
\l_{\Omega,b}^\tau(u)=\int_\Omega|(\nabla-ib\Ab_0)u|^2\,dx
+\int_{\partial\Omega}\tau|u|^2\,d S\,,
\end{equation}
defined for all functions $u$ in the form domain
\begin{equation}\label{D-QF-q}
D\bigl(\l_{\Omega,b}^\tau\bigr)=H^1_{\Ab_0}(\Omega)=
\bigl\{u\in L^2(\Omega)~:~(\nabla-ib\Ab_0)u\in
L^2(\Omega)\bigr\}\subseteq H^1_{\text{loc}}(\Omega)\,.
\end{equation}

We introduce the \emph{(multidimensional) Landau levels} 
$\Lambda_q$, $q\in\N$, as
\[
\Lambda_q := \bigl(2(q-1)+d\bigr)b,\quad q\in\Nzero\,,
\quad \Lambda_0:=-\infty.
\]
The name is motivated by the fact that these numbers (for $q\in\Nzero$) are the
eigenvalues of the Landau Hamiltonian in $\R^{2d}$, see 
Section~\ref{sec:landau}.

We are now able to state the first main result, concerning the essential 
spectrum of $L_{\Omega,b}^\tau$ together with the non-accumulation of 
eigenvalues to the Landau levels from above.

\begin{thm}
\label{thm-GKPone}
Let $\Omega\subset \R^{2d}$ be a compactly complemented Lipschitz domain 
and $\tau\in L^\infty(\partial\Omega,\R)$. The essential spectrum 
of the operator $L_{\Omega,b}^\tau$
consists of the Landau levels,
\begin{equation}\label{eq-thm-1}
\sigma_{\text{ess}}\bigl(L_{\Omega,b}^\tau\bigr)
=\bigl\{\Lambda_q~:~q\in\Nzero\bigr\}\,.
\end{equation}
Moreover, for all $\epsilon\in(0,b)$ and $q\in\Nzero$, the spectrum
of $L_{\Omega,b}^\tau$ in the open interval
$\bigl(\Lambda_q,\Lambda_q+\varepsilon\bigr)$ is finite.
\end{thm}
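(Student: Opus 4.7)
The strategy is to compare $L_{\Omega,b}^\tau$ with the full-space Landau Hamiltonian on $\R^{2d}$, whose spectrum consists precisely of the Landau levels $\{\Lambda_q:q\in\Nzero\}$ with each level of infinite multiplicity. The essential-spectrum statement will follow from a Weyl-sequence construction combined with a parametrix argument away from the Landau levels, and the finiteness statement will follow from a Birman--Schwinger reduction in the spectral gap $(\Lambda_q,\Lambda_{q+1})$ of the reference operator.

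\textbf{Essential spectrum.} To show $\Lambda_q \in \sigma_{\text{ess}}(L_{\Omega,b}^\tau)$, fix a well-localized Landau eigenfunction $\psi$ at energy $\Lambda_q$ on $\R^{2d}$, translate it by points $y_n\to\infty$ along a direction escaping $K$ while applying the magnetic gauge transformation needed to remain an eigenfunction in the symmetric gauge, and multiply by a smooth cutoff vanishing in a neighborhood of $K$. The resulting vectors lie in $D(L_{\Omega,b}^\tau)$ (the Robin condition is trivially satisfied where the cutoff vanishes), converge weakly to zero, and satisfy $\|(L_{\Omega,b}^\tau-\Lambda_q)u_n\|\to 0$, so they form a singular Weyl sequence. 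For the reverse inclusion, fix $\lambda\notin\{\Lambda_q\}$ and build a parametrix for $L_{\Omega,b}^\tau-\lambda$ via a partition of unity $\chi_1+\chi_2=1$ with $\chi_1$ compactly supported near $\partial\Omega$: on $\operatorname{supp}\chi_1$ the operator is an elliptic Robin boundary problem on a bounded Lipschitz domain, invertible modulo compacts by standard elliptic theory; on $\operatorname{supp}\chi_2$ it coincides with the full-space Landau Hamiltonian minus $\lambda$, which is invertible since $\lambda$ is in its resolvent set. The commutators $[L_{\Omega,b}^\tau,\chi_j]$ are compactly supported and first-order, hence relatively compact, and gluing yields that $L_{\Omega,b}^\tau-\lambda$ is Fredholm.

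\textbf{Finiteness in $(\Lambda_q,\Lambda_q+\epsilon)$.} Fix $q\in\Nzero$ and $\epsilon\in(0,b)$, and note that $(\Lambda_q,\Lambda_q+2b)$ is a spectral gap of the full-space Landau Hamiltonian. The plan is a Birman--Schwinger argument in this gap: the eigenvalues of $L_{\Omega,b}^\tau$ in $(\Lambda_q,\Lambda_q+\epsilon)$ should be counted by positive eigenvalues, exceeding a threshold determined by $\epsilon$, of a Toeplitz-type operator obtained by compressing an effective compact perturbation onto the infinite-dimensional Landau eigenspace at $\Lambda_q$. The compact perturbation encodes the difference between the Robin problem on $\Omega$ and the full-space Hamiltonian, combining an interior contribution from excising the compact set $K$ with a boundary contribution involving $\tau$ and traces on $\partial\Omega$. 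Its compactness (ultimately traceable to the compactness of $K$ and of $\partial\Omega$) implies that only finitely many of its eigenvalues exceed any fixed positive threshold, which then yields finiteness of the spectrum of $L_{\Omega,b}^\tau$ in $(\Lambda_q,\Lambda_q+\epsilon)$.

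\textbf{Main obstacle.} The technical heart of the finiteness argument is the precise identification of this effective compact perturbation and its Toeplitz-type compression onto the Landau eigenspace, which requires care at a Lipschitz boundary: one must carefully handle traces on $\partial\Omega$ to express the difference between the Robin form and the full-space form as a compact perturbation, and then verify that the Birman--Schwinger correspondence in the gap is quantitative enough to convert the decay of the compact operator's eigenvalues into genuine finiteness of the eigenvalue count in $(\Lambda_q,\Lambda_q+\epsilon)$. The Weyl sequence and parametrix parts are comparatively standard once the magnetic gauge translations are handled correctly and the Lipschitz elliptic theory for the Robin problem on the bounded piece is in place.
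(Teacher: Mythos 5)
Your Weyl-sequence plus parametrix treatment of the essential spectrum is a valid but genuinely different route from the paper's. The paper instead extends $L_{\Omega,b}^\tau$ to an operator $\widetilde L = L_{\Omega,b}^\tau \oplus L_{K,b}^{-\tau}$ on all of $L^2(\R^{2d})$ by adjoining the Robin problem with coefficient $-\tau$ on the compact complement $K$, and then compares resolvents with the full-space Landau Hamiltonian $L$: since $L_{K,b}^{-\tau}$ has compact resolvent one has $\sigma_{\text{ess}}(L_{\Omega,b}^\tau)=\sigma_{\text{ess}}(\widetilde L)$, and showing $V:=\widetilde L^{-1}-L^{-1}$ compact gives \eqref{eq-thm-1} at once by Weyl's theorem. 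The point of that construction is that it also delivers the finiteness statement from the same operator $V$, whereas your plan handles the two assertions by two unrelated arguments. Your parametrix sketch for the reverse inclusion is plausible but would need care: the "elliptic Robin problem on a bounded Lipschitz domain" near $\partial\Omega$ also needs a closing boundary condition on the inner boundary of the collar, and the gluing must respect the magnetic gauge.

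The finiteness part contains a real gap: compactness of the effective perturbation alone does not preclude eigenvalues of $L_{\Omega,b}^\tau$ from accumulating at $\Lambda_q$ from above. A compact perturbation generically pushes eigenvalues towards a gap edge from both sides; your statement that "only finitely many eigenvalues exceed a fixed positive threshold" silently assumes sign-definiteness, which you never establish, and it is precisely the crux of the matter. In the paper this is handled by the specific choice of padding: with $-\tau$ on $K$ the boundary terms in $\l_{\Omega,b}^\tau$ and $\l_{K,b}^{-\tau}$ cancel on $H^1_{\Ab_0}(\R^{2d})$, so the quadratic form of $\widetilde L$ restricts to that of $L$ on the smaller form domain, and Lemma~\ref{lem-PR} yields $\widetilde L^{-1}\geq L^{-1}$, i.e.\ $V\geq 0$. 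Only then does the Birman--Solomyak perturbation lemma (Lemma~\ref{lem-biso}) for positive compact perturbations give that $\sigma(\widetilde L^{-1})$ does not accumulate at $\Lambda_q^{-1}$ from below, equivalently that $\sigma(L_{\Omega,b}^\tau)\cap(\Lambda_q,\Lambda_q+\epsilon)$ is finite. Without identifying a positive, compact perturbation on a common Hilbert space (which forces a construction like $\widetilde L$), your Birman--Schwinger plan cannot conclude.
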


Next, we restrict our attention to the case that $\partial\Omega$ is $C^\infty$ 
and $\tau\in C^\infty(\partial\Omega,\R)$. For $\epsilon>0$, we let 
$N\bigl(\Lambda_{q-1},\Lambda_q -\epsilon,L_{\Omega,b}^\tau\bigr)$ 
denote the number of eigenvalues 
of $L_{\Omega,b}^\tau$ in the open interval 
$\bigl(\Lambda_{q-1},\Lambda_q -\epsilon\bigr)$, counting multiplicity. 
We also denote by 
$\kap(K)$ the logarithmic capacity of the domain $K=\R^2\setminus\Omega$.

\begin{thm}
\label{thm-GKPtwo}
Let $\Omega\subset \R^{2d}$ be a compactly complemented domain with smooth 
boundary and $\tau\in C^\infty(\partial\Omega,\R)$.
\begin{itemize}\item[(A)] Assume $d=1$. For all $q\in\Nzero$, denoting by
$\bigl\{\ell_j^{(q)}\bigr\}_{j\in\mathbb N}$ the nondecreasing sequence of
eigenvalues of $L_{\Omega,b}^\tau$ (counting multiplicities) in the interval
$(\Lambda_{q-1},\Lambda_q)$, the following holds:
\begin{equation}
\label{eq-thm-2}
\lim_{j\to{+\infty}}
\Bigl(j!\bigl(\Lambda_q-\ell_j^{(q)}\bigr)\Bigr)^{1/j}
=\frac{b}2\big{(}\kap(K)\big{)}^2\,.
\end{equation} 
\item[(B)] Assume $d\geq 1$. The counting function 
$N\bigl(\Lambda_{q-1},\Lambda_q -\epsilon,L_{\Omega,b}^\tau\bigr)$ has 
the asymptotics:
\begin{equation}
\label{eq-thm-higherd}
N\bigl(\Lambda_{q-1},\Lambda_q -\epsilon,L_{\Omega,b}^\tau\bigr)
\sim \begin{pmatrix}q+d-1\\d-1\end{pmatrix} \frac{1}{d!}
\left( \frac{|\log\epsilon|}{\log|\log \epsilon|}\right)^d 
\quad\text{as}\quad \epsilon\to 0^+.
\end{equation}
\end{itemize}
\end{thm}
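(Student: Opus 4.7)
The plan is a three-step reduction: first remove the role of $\tau$ and the compact piece $K$; second, convert the eigenvalue-counting problem into a singular-value problem for a Landau--Toeplitz operator with symbol $\chi_K$; third, invoke the known asymptotics of such Toeplitz operators. The anticipated consequence is that parts (A) and (B) are formally the same statement, the only difference being the scalar vs.\ higher-dimensional Toeplitz asymptotics one quotes at the end.

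\textbf{Step 1. Insensitivity to $\tau$ and bracketing.} The Robin term $\int_{\partial\Omega}\tau|u|^2\,dS$ is form-compact relative to $\l_{\Omega,b}^0$: the trace theorem on a bounded collar of the compact set $\partial\Omega$, combined with Rellich compactness, controls it by $\delta\,\l_{\Omega,b}^0(u)+C_\delta\|u\|_{L^2(\Omega)}^2$. By the min-max principle this renders the leading-order cluster asymptotics independent of $\tau\in C^\infty(\partial\Omega,\R)$, so we may take $\tau\equiv 0$. Dirichlet--Neumann bracketing across $\partial\Omega$ then sandwiches the full-space Landau Hamiltonian $H_0=-(\nabla-ib\Ab_0)^2$ on $L^2(\R^{2d})$ between $L_{K,b}^N\oplus L_{\Omega,b}^0$ and its Dirichlet analog. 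Since $K$ is compact, the interior pieces contribute only finitely many eigenvalues in each interval $(\Lambda_{q-1},\Lambda_q)$, so $N(\Lambda_{q-1},\Lambda_q-\epsilon,L_{\Omega,b}^\tau)$ agrees up to $O(1)$ with the count coming from the exterior part of the $\Lambda_q$-eigenspace of $H_0$.

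\textbf{Step 2. Birman--Schwinger / Grushin reduction to a Toeplitz operator.} Let $P_q$ denote the spectral projector of $H_0$ onto the eigenvalue $\Lambda_q$; its explicit kernel (recalled in Section~\ref{sec:landau}) makes it amenable to Toeplitz analysis. Following the strategy of Raikov--Warzel, Pushnitski--Rozenblum, and Filonov--Pushnitski, I would perform a Grushin/Feshbach reduction of $L_{\Omega,b}^0-\Lambda_q$ near $0$, identifying the leading effective operator on $\operatorname{Ran}P_q$ with the compactly supported Landau--Toeplitz operator
\[
\mathcal T_q := P_q\,\chi_K\,P_q ,
\]
and yielding
\[
N\bigl(\Lambda_{q-1},\Lambda_q-\epsilon,L_{\Omega,b}^\tau\bigr) \;=\; n_+(\epsilon,\mathcal T_q)+O(1)\qquad (\epsilon\to 0^+),
\]
where $n_+(\epsilon,\mathcal T_q)$ counts the eigenvalues of $\mathcal T_q$ exceeding $\epsilon$. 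In part (A), the same reduction should in fact give the stronger asymptotic $\Lambda_q-\ell_j^{(q)}\sim\lambda_j(\mathcal T_q)$ as $j\to\infty$.

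\textbf{Step 3. Landau--Toeplitz asymptotics, and the main obstacle.} For $\chi_K$ the indicator of a compact set of positive capacity, the singular values of $\mathcal T_q$ are classical: when $d=1$ they decay factorially with rate set by the logarithmic capacity, $\lim_{j\to\infty}(j!\,\lambda_j(\mathcal T_q))^{1/j}=\tfrac{b}{2}\kap(K)^2$, giving (A); and for $d\geq 1$ the counting function of $\mathcal T_q$ has the logarithmic asymptotic stated in (B), with the prefactor $\binom{q+d-1}{d-1}$ reflecting the multiplicity with which $\Lambda_q=(2(q-1)+d)b$ arises as a Landau eigenvalue in $\R^{2d}$. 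The hard part of the whole argument is Step~2: Birman--Schwinger reductions of this form are standard for electric perturbations of $H_0$ on all of $\R^{2d}$, but here one must carry them out in the exterior of $K$ with a magnetic Robin condition. The key technical burden is controlling the boundary traces of near-threshold quasi-modes and verifying, uniformly in $\tau$, that the Grushin effective operator is indeed $\mathcal T_q+o(1)$; this is precisely what makes the limit independent of the boundary condition, as announced in the abstract.
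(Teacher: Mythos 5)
Your high-level plan---reduce the cluster asymptotics to Landau--Toeplitz spectral asymptotics from \cite{fipu,mero}---agrees with the paper's strategy, but Steps~1 and~2 have gaps serious enough to sink the argument. In Step~1, form-compactness of $\int_{\partial\Omega}\tau|u|^2\,dS$ relative to the Neumann form gives invariance of the \emph{essential} spectrum, but it does not imply that the cluster eigenvalues $\Lambda_q-\ell_j^{(q)}$, which decay like $1/j!$, have $\tau$-independent asymptotics: a form-compact perturbation can displace each discrete eigenvalue by amounts enormous compared with such microscopic gaps. Independence of $\tau$ is precisely the non-trivial content of the theorem and cannot be assumed at the outset; indeed the paper never reduces to $\tau\equiv0$ but carries $\tau$ through the entire pseudo-differential analysis. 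The Dirichlet--Neumann bracketing you invoke also fails: for sign-changing $\tau$ the Robin form is not sandwiched between the Neumann and Dirichlet forms, and even when it is, the Dirichlet exterior operator has its clusters \emph{above} $\Lambda_q$, so the upper bracket supplies nothing about the cluster below $\Lambda_q$.

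In Step~2 the target you set yourself---a Grushin effective operator equal to $\mathcal T_q=P_q\chi_K P_q$ up to $o(1)$, with an $O(1)$ error in the count---is neither what the paper proves nor plausibly attainable, since an operator-norm $o(1)$ remainder cannot control factorially small eigenvalues. The paper instead compares resolvents: it sets $V=\widetilde L^{-1}-L^{-1}$, shows $V$ is positive, compact, and localized to $\partial\Omega$ via a Birman-type integration-by-parts identity (Lemma~\ref{lem:V}), and then compares $T_q:=P_qVP_q$ with the Toeplitz operators $S_q^{K_0}$ and $S_q^{K_1}$ for $K_0\subset K\subset K_1$ through the \emph{two-sided} estimate $\tfrac1C\langle f,S_q^{K_0}f\rangle\le\langle f,T_qf\rangle\le C\langle f,S_q^{K_1}f\rangle$ on a finite-codimension subspace (Lemma~\ref{lem:toeplitztva}). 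This much coarser multiplicative control is sufficient because the functionals $(j!\,s_j)^{1/j}$ and $|\log\epsilon|/\log|\log\epsilon|$ are insensitive to constants and to slight enlargements or shrinkings of $K$. The hard work, which your outline only gestures at, is establishing this two-sided estimate via the Robin-to-Dirichlet operators, boundary layer potentials, and elliptic pseudo-differential calculus on $\partial\Omega$ developed in Sections~\ref{subsec:boundary}--\ref{sec:jobbigtlemma}.
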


\begin{remark}
Theorems~\ref{thm-GKPone} and~\ref{thm-GKPtwo} were obtained for the 
Neumann case ($\tau\equiv0$) by 
the third author in \cite{P}, and our proofs will follow the same idea as
in~\cite{P}. 
However, in~\cite{P}, the full details concerning the reduction 
to Toeplitz type operators were not written out explicitly. In this paper, 
we aim not only to generalize the Neumann result, but also to make the 
proof of Theorem~\ref{thm-GKPtwo} more transparent.
\end{remark}

\begin{remark}
Our proof of Theorem \ref{thm-GKPtwo} is carried out for $\tau$ being a 
self-adjoint pseudo-differential operator of  
order $0$ on $\partial \Omega$. They can be generalized to self-adjoint 
pseudo-differential operators of order $t<1$.
We also note that the pseudo-differential nature of the proof of 
Theorem~\ref{thm-GKPtwo} requires a fair amount of regularity on the boundary. 
Any considerable reduction of the regularity assumptions in 
Theorem~\ref{thm-GKPtwo} would require a new approach or 
a perturbation result for the left hand side of Equation \eqref{eq-thm-2} and \eqref{eq-thm-higherd}.
\end{remark}

The rest of the paper is devoted to the proof of Theorem~\ref{thm-GKPone} and 
Theorem~\ref{thm-GKPtwo}. The proof of Theorem ~\ref{thm-GKPone} is contained 
in Section~\ref{sec:proofone}. The proof of Theorem~\ref{thm-GKPtwo} is divided 
into two sections: the bulk of the proof is contained in 
Section~\ref{sec:prooftwo} 
except for a technical lemma, Lemma~\ref{lem:toeplitztva}, which is proved in 
Section~\ref{sec:jobbigtlemma}. Similarly to~\cite{P,puro}, the main idea in 
both proofs is to compare the resolvent of $L^\tau_{\Omega,b}$ with the 
resolvent of the Landau Hamiltonian.  Roughly speaking, the resolvents are 
compact perturbation of one another. 

Section~\ref{sec:proofone} goes analogously to~\cite[Section $3.1$]{P}. It is 
included for completeness and providing a notational introduction. As mentioned 
above, it relies heavily on the resolvent techniques introduced 
in~\cite[Proposition~2.1]{puro}. 

The asymptotics in Theorem~\ref{thm-GKPtwo} comes from the spectral 
asymptotics of Toeplitz operators on the Landau levels, these deep results were 
proven in \cite{fipu, mero}. We recall them in 
Subsection~\ref{subsec:specofcetrain}. The key step in obtaining the asymptotic 
accumulation of eigenvalues described in Theorem~\ref{thm-GKPtwo}, is the 
reduction to the case of Toeplitz operators from~\cite{fipu, mero} via a 
certain pseudo-differential operator on the boundary. The relevant 
pseudo-differential operators, e.g. boundary layer potentials and Dirichlet to 
Robin operators, are introduced in the 
Subsections~\ref{ressub},~\ref{subsec:boundary} and~\ref{sec:DRRD}. 
The reduction is carried out in Subsection~\ref{subsec:reduction} apart from a 
technical lemma. The technical Lemma~\ref{lem:toeplitztva} states the 
equivalence of the quadratic form associated with a 
certain pseudo-differential operator of order $1$ and the $H^{1/2}$-norm on 
the boundary. It is proven via standard pseudo-differential techniques in 
Section~\ref{sec:jobbigtlemma}.

\section{Proof of Theorem~\ref{thm-GKPone}}
\label{sec:proofone}

In this section we prove Theorem~\ref{thm-GKPone}. As remarked above, the proof 
goes along the lines of~\cite[Section~3.1]{P}. After adding on a Landau 
Hamiltonian $L_{K,b}^{-\tau}$ on $K$ (the sign of $-\tau$ comes 
from the orientation on the boundary) we can consider an operator densely 
defined in $L^2(\R^{2d})$ coinciding in form sense on the form domain of the 
usual Landau Hamiltonian. While $L_{K,b}^{-\tau}$ has discrete spectrum, the 
proof of Theorem~\ref{thm-GKPone} is deduced below in 
Corollary~\ref{corl:proof-mainthm} from the abstract results from~\cite{biso} 
and~\cite{puro} reviewed in the next subsection.

\subsection{Two abstract results}
In this section we state two abstract results.
We will use the first result to conclude positivity of difference of 
resolvents and the second one to obtain the finiteness of eigenvalues above
each Landau level.

\begin{lemma}[Pushnitski-Rozenblum~{\cite[Proposition~2.1]{puro}}]
\label{lem-PR}
Assume that $A$ and $B$ are two self-adjoint positive operators satisfying the 
following hypotheses:
\begin{itemize}
\item $0\not\in \sigma(A)\cup\sigma(B)$.
\item The form domain of $A$ contains that of $B$, i.e.
$D(B^{1/2})\subset D(A^{1/2})$.
\item For all $f\in D(B^{1/2})$, 
$\bigl\|A^{1/2}f\bigr\|=\bigl\|B^{1/2}f\bigr\|$, i.e.
the quadratic forms of $A$ and $B$ agree on the form
domain of $B$. \end{itemize} Then, $B^{-1}\leq A^{-1}$ in the quadratic form
sense, i.e.
\begin{displaymath}
\bigl\langle B^{-1}f,f\bigr\rangle
\leq \bigl\langle A^{-1}f,f\bigr\rangle\quad\forall~f.
\end{displaymath}
\end{lemma}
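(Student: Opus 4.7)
The plan is to reduce the inequality to a variational characterization of $\langle T^{-1}f,f\rangle$ in terms of the quadratic form of $T$, and then exploit the hypotheses directly. For a self-adjoint positive operator $T$ with $0\notin\sigma(T)$, I claim the identity
\begin{equation*}
\langle T^{-1}f,f\rangle
=\sup_{g\in D(T^{1/2})}
\bigl(2\,\Real\langle f,g\rangle-\|T^{1/2}g\|^2\bigr),
\qquad f\in L^2.
\end{equation*}
This is the standard Legendre-type duality between a positive quadratic form and its inverse: the functional $g\mapsto 2\Real\langle f,g\rangle-\|T^{1/2}g\|^2$ is concave, and by writing $T^{1/2}g=h$ and completing the square one sees that the supremum is attained at $g=T^{-1}f\in D(T)\subset D(T^{1/2})$, with value $\langle T^{-1}f,f\rangle$. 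The assumption $0\notin\sigma(T)$ guarantees that $T^{-1}$ is a bounded everywhere-defined operator, so $T^{-1}f$ is indeed a legitimate test element.

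With that identity in hand, applying it to $B$ gives
\begin{equation*}
\langle B^{-1}f,f\rangle
=\sup_{g\in D(B^{1/2})}\bigl(2\,\Real\langle f,g\rangle-\|B^{1/2}g\|^2\bigr).
\end{equation*}
The second and third hypotheses say precisely that on the (smaller) form domain $D(B^{1/2})$ we may replace $\|B^{1/2}g\|^2$ by $\|A^{1/2}g\|^2$ without changing anything. Thus the supremum equals
\begin{equation*}
\sup_{g\in D(B^{1/2})}\bigl(2\,\Real\langle f,g\rangle-\|A^{1/2}g\|^2\bigr)
\le
\sup_{g\in D(A^{1/2})}\bigl(2\,\Real\langle f,g\rangle-\|A^{1/2}g\|^2\bigr)
=\langle A^{-1}f,f\rangle,
\end{equation*}
where the inequality is merely enlarging the set over which one takes the supremum. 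This is the desired form inequality $B^{-1}\le A^{-1}$.

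The main steps to verify carefully are thus: (i) the Legendre duality identity above, which is the only real piece of content; and (ii) the bookkeeping confirming that $T^{-1}f$ actually lies in $D(T^{1/2})$ so the supremum is attained, a point where the hypothesis $0\notin\sigma(T)$ (applied to both $A$ and $B$) is essential. The potential obstacle, which turns out not to bite here, is that naively one might hope for the stronger statement $B\ge A$ as forms; this is false in general because the form domains differ, and the whole point of the Legendre formulation is that it converts the asymmetric domain inclusion $D(B^{1/2})\subset D(A^{1/2})$ into a supremum over a smaller set, which automatically gives the reversed inequality for the inverses without needing $A$ and $B$ themselves to be comparable.
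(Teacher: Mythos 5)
Your proof is correct. Note first that the paper does not prove this lemma at all; it is stated with a citation to Pushnitski--Rozenblum \cite[Proposition~2.1]{puro} and used as a black box. Your Legendre--Fenchel duality argument is a clean, self-contained proof and is essentially the same variational idea used in \cite{puro}: characterize $\langle T^{-1}f,f\rangle$ as a supremum of a concave functional over the form domain of $T$, observe that the hypotheses let you replace the $B$-form by the $A$-form on $D(B^{1/2})$, and then enlarge the set over which the supremum is taken. The two small points you flag for verification are both fine: the identity
\begin{equation*}
\langle T^{-1}f,f\rangle=\sup_{g\in D(T^{1/2})}\bigl(2\,\Real\langle f,g\rangle-\|T^{1/2}g\|^2\bigr)
\end{equation*}
follows by substituting $h=T^{1/2}g$ (a bijection of $D(T^{1/2})$ onto $L^2$ since $0\notin\sigma(T)$), rewriting as $2\,\Real\langle T^{-1/2}f,h\rangle-\|h\|^2=\|T^{-1/2}f\|^2-\|h-T^{-1/2}f\|^2$, and taking the supremum; and $T^{-1}f\in D(T)\subset D(T^{1/2})$ holds for any positive self-adjoint $T$. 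Your closing remark is also correct and worth keeping: the inequality $B^{-1}\le A^{-1}$ in form sense does \emph{not} require $A\le B$ as forms on a common domain, and the duality reformulation is precisely what turns the domain inclusion $D(B^{1/2})\subset D(A^{1/2})$ into the desired one-sided comparison of inverses.
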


\begin{lemma}[{\cite[Theorem~9.4.7]{biso}}]
\label{lem-biso}
Assume $A$ is a  self-adjoint operator  and $V$ a compact and positive operator 
such that the spectrum of $A$ in an interval
$(\alpha,\beta)$ is discrete and does not accumulate at $\beta$. Then the 
spectrum of the operator $B=A+V$ in $(\alpha,\beta)$ is discrete and does not 
accumulate at $\beta$. 
\end{lemma}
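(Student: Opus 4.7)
I would prove the two assertions separately.

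\emph{Discreteness.} By Weyl's theorem on the invariance of the essential spectrum under compact perturbations, $\sigma_{\mathrm{ess}}(B)=\sigma_{\mathrm{ess}}(A)$. The hypothesis that $\sigma(A)\cap(\alpha,\beta)$ is discrete is equivalent to $\sigma_{\mathrm{ess}}(A)\cap(\alpha,\beta)=\emptyset$, and hence also $\sigma_{\mathrm{ess}}(B)\cap(\alpha,\beta)=\emptyset$. Therefore $\sigma(B)\cap(\alpha,\beta)$ consists of isolated eigenvalues of finite multiplicity.

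\emph{Non-accumulation at $\beta$.} Using the non-accumulation hypothesis, I would first shrink $\delta>0$ so that $A$ has no spectrum at all in $(\beta-\delta,\beta)$ (this is possible precisely because the finitely many eigenvalues of $A$ in any $(\beta-\delta_0,\beta)$ have a supremum strictly less than $\beta$). Since $V\geq 0$ is compact, its eigenvalues $\mu_1\geq\mu_2\geq\cdots$ tend to zero, so I can pick $N$ with $\mu_{N+1}<\delta/2$ and decompose $V=V_{\leq N}+V_{>N}$ using the spectral projections of $V$ onto the top-$N$ eigenvalues and the remainder. Then $V_{\leq N}$ has rank $N$, $V_{>N}\geq 0$, and $\|V_{>N}\|<\delta/2$.

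Set $A_N:=A+V_{\leq N}$. By standard rank-$N$ perturbation bounds on the spectral counting function (a Cauchy-interlacing / Weyl-inequality style argument in the spectral gap around $\beta$), $A_N$ has at most $N$ eigenvalues in $(\beta-\delta,\beta)$. Now $B=A_N+V_{>N}$ is a bounded, nonnegative perturbation of $A_N$ of operator norm less than $\delta/2$; the min--max principle applied in the gap then forces each eigenvalue $\mu$ of $B$ in $(\beta-\delta/2,\beta)$ to come from a corresponding eigenvalue $\mu_k(A_N)$ with $\mu-\delta/2<\mu_k(A_N)\leq \mu$, so $\mu_k(A_N)\in(\beta-\delta,\beta)$. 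Consequently $B$ has at most $N$ eigenvalues in $(\beta-\delta/2,\beta)$, yielding the desired non-accumulation.

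\emph{Main obstacle.} The main subtlety is making the min--max counting rigorous in the presence of the possibly rich essential spectrum of $A$ outside $(\alpha,\beta)$ and the possibility that $\beta\in\sigma_{\mathrm{ess}}(A)$: one must work inside the spectral subspaces corresponding to the gap around $\beta$ and exploit the positivity of $V_{>N}$ to obtain a one-sided shift of eigenvalues. These points are handled cleanly by restricting to the finite-dimensional spectral subspace $E_B((\beta-\delta/2,\beta))L^2$ provided by the discreteness step, so that the min--max characterization reduces to a finite-dimensional comparison.
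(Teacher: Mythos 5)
The paper does not prove this lemma: it is quoted directly from Birman--Solomjak \cite[Theorem~9.4.7]{biso}, so the only comparison available is against the standard argument from that source. Your discreteness step is fine (Weyl's theorem), and your first reduction (choose $\delta$ with $\sigma(A)\cap(\beta-\delta,\beta)=\emptyset$, split $V=V_{\le N}+V_{>N}$, bound the rank-$N$ perturbation $A_N=A+V_{\le N}$ by at most $N$ eigenvalues in $(\beta-\delta,\beta)$) is correct. The gap is in the final step.

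You claim that because $B=A_N+V_{>N}$ with $V_{>N}\ge 0$ and $\|V_{>N}\|<\delta/2$, ``the min--max principle applied in the gap'' matches each eigenvalue of $B$ in $(\beta-\delta/2,\beta)$ with an eigenvalue of $A_N$ in $(\beta-\delta,\beta)$. There is no such min--max in a gap that sits above part of the essential spectrum and below more of it, with $\beta$ itself possibly in $\sigma_{\mathrm{ess}}$. The fix you offer --- restrict to the finite-dimensional subspace $\mathcal M:=E_B((\beta-\delta/2,\beta))L^2$ --- does not close the gap. On $\mathcal M$ one indeed gets, from $V_{>N}\ge 0$ and $\|V_{>N}\|<\delta/2$,
\[
\beta-\delta<\frac{\langle A_N\psi,\psi\rangle}{\|\psi\|^2}<\beta
\qquad\text{for all }\psi\in\mathcal M\setminus\{0\},
\]
but this Rayleigh-quotient confinement does not bound $\dim\mathcal M$ by the number of eigenvalues of $A_N$ in $(\beta-\delta,\beta)$: a vector $\psi$ can have its $A_N$-spectral mass split between $(-\infty,\beta-\delta]$ and $[\beta,\infty)$ and still have Rayleigh quotient in $(\beta-\delta,\beta)$. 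Since $[\beta,\infty)$ may contain (much) essential spectrum, there is a priori room for an infinite-dimensional space of such vectors, and your argument does not exclude it. So the key counting inequality is asserted, not proved.

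The argument that actually works (and is essentially Birman's) is the Birman--Schwinger one, and once you have it the finite-rank/small splitting of $V$ becomes unnecessary. Fix $\delta$ with $\sigma(A)\cap(\beta-\delta,\beta)=\emptyset$, set $\lambda_0:=\beta-\delta/2$, and for $\lambda\in(\beta-\delta,\beta)\setminus\sigma(A+V)$ form $K(\lambda):=V^{1/2}(\lambda-A)^{-1}V^{1/2}$, which is compact, self-adjoint and real-analytic on $(\beta-\delta,\beta)$. Then $\lambda$ is an eigenvalue of $A+V$ of multiplicity $m$ iff $1$ is an eigenvalue of $K(\lambda)$ of multiplicity $m$, and $K'(\lambda)=-V^{1/2}(\lambda-A)^{-2}V^{1/2}\le 0$, so each ordered eigenvalue curve of $K(\lambda)$ is nonincreasing and crosses the level $1$ at most once. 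Consequently the number of eigenvalues of $A+V$ in $(\lambda_0,\beta)$, counted with multiplicity, is at most $\#\{j:\mu_j(K(\lambda_0))\ge 1\}$, which is finite because $K(\lambda_0)$ is compact. Note that positivity of $V$ and compactness of $V$ both enter in an essential, quantitative way here (through $V^{1/2}$ and the monotonicity of $K$), whereas in your write-up compactness is used only to make $\|V_{>N}\|$ small. I would recommend replacing Step~4 by this Birman--Schwinger counting or by a careful appeal to the gap-variational principle with the precomposition by the orthogonal decomposition $E_A([\beta,\infty))\oplus E_A((-\infty,\beta-\delta])$, rather than the stated min--max comparison.
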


\subsection{Some facts about the Landau Hamiltonian in $\R^{2d}$}
\label{sec:landau}
In this section we review classical results concerning the Landau Hamiltonian
\begin{equation}\label{eq-LH}
L=-(\nabla-ib\Ab_0)^2\quad\text{in }\mathbb{R}^{2d}\,.
\end{equation}
Here $\Ab_0$ is the magnetic potential of a unit constant magnetic field of 
full rank introduced in~\eqref{mp-A0}, and $b$ is a positive constant. The 
form domain of $L$ is the magnetic Sobolev space
\begin{displaymath}
H^1_{\Ab_0}(\R^{2d})=\bigl\{u\in L^2(\R^{2d})~:~(\nabla-ib\Ab_0)u\in
L^2(\R^{2d})\bigr\}\subseteq H^1_{\text{loc}}(\R^{2d})\,.
\end{displaymath}
The spectrum of $L$ consists of infinitely degenerate eigenvalues
called Landau levels,
\begin{displaymath}
\sigma(L)=\sigma_{\text{ess}}(L)
=\bigl\{\Lambda_q~:~q\in \Nzero\bigr\}.
\end{displaymath}
We denote by $\mathcal L_q$ the eigenspace associated with the
Landau level $\Lambda_q$, i.e.
\begin{equation}\label{eq-Ln}
\mathcal{L}_q=\Ker(L-\Lambda_q)\quad\forall~q\in\Nzero\,.
\end{equation}
We use the notation $P_q$ for the orthogonal projection onto the 
eigenspace $\mathcal L_q$.

The operator $L$ can be expressed in terms of creation and annihilation 
operators. We introduce the complex notation 
$z_j=x_{2j-1}+ix_{2j}$, $j=1,\ldots, d$ and let $\Psi=\frac{b}{4}|z|^2$ be 
a scalar potential for the magnetic field, i.e. $\Delta\Psi=b$. The differential expressions
\begin{equation*}
\overline{Q}_j=-2i e^{-\Psi}\frac{\partial}{\partial z_j}e^\Psi,\quad
Q_j = -2i e^\Psi\frac{\partial}{\partial \overline{z}_j} e^{-\Psi}
\end{equation*}
formally satisfy the following well known identities:
\begin{equation}\label{eq-creann}
\begin{aligned}
\overline{Q}_j&=Q^*_j,\quad 1\leq j\leq d,\\
\bigl[Q_j,\overline{Q}_k\bigr]&=2b\delta_{jk},\quad 1\leq j,k\leq d,\\
L &=\sum_{j=1}^d \overline{Q}_jQ_j+bd.
\end{aligned}
\end{equation}

\subsection{Extension of $L_\Omega$ to an operator in $L^2(\R^{2d})$}
\label{subsectionextensions}

We introduced the operator 
$L_{\Omega,b}^\tau$ with quadratic form $\l_{\Omega,b}^\tau$ 
in~\eqref{QF-q}. We will use also the corresponding operator in $K^\circ$, namely 
$L_{K,b}^{-\tau}$. We will throughout the paper work under the assumption that 
the two quadratic forms $\l_{\Omega,b}^\tau$ and $\l_{K,b}^{-\tau}$ are 
strictly positive. This is always attainable after a shift of the quadratic 
forms by a constant. The effect of the constant is merely a shifting of the 
spectrums of all involved operators, hence we will for notational simplicity 
always assume that this constant is $0$.

\begin{remark}
Notice that, for $u\in H^1_{\Ab_0}(\R^{2d})$,
\[
\l_{\Omega,b}^\tau(u_\Omega)+\l_{K,b}^{-\tau}(u_K)
=\int_{\R^{2d}}|(\nabla-ib\Ab_0)u|^2\,d x.
\]
This motivates the usage of $-\tau$ for the quadratic form on $K$.
For $u_\Omega\in D(L_{\Omega,b}^\tau)$ and $u_K\in D(L_{K,b}^{-\tau})$,
\[
\partial_R u_\Omega=\partial_R u_K=0 \quad \mbox{on}\quad \partial\Omega,
\]
where $\partial_R$ denotes the Robin differential expression 
from~\eqref{eq:dNdR}. 
\end{remark}

When there is no ambiguity, we will skip $b$ and $\tau$ 
from the notation, and write $L_\Omega$, $L_K$, $\l_\Omega$ and $\l_K$ for the 
operators $L_{\Omega,b}^\tau$, $L_{K,b}^{-\tau}$, the quadratic forms 
$\l_{\Omega,b}^\tau$ and $\l_{K,b}^{-\tau}$ respectively.

Since $\Omega$ and $K$ are complementary in 
$\R^{2d}$, the space $L^2(\R^{2d})$ is decomposed as a direct sum 
$L^2(\Omega)\oplus L^2(K)$. This permits us to extend the operator
$L_\Omega$ in $L^2(\Omega)$ to an operator $\widetilde L$ in
$L^2(\R^{2d})$. We let $\widetilde L=L_\Omega\oplus L_K$ in
$D(L_\Omega)\oplus D(L_K)\subset L^2(\R^{2d})$. More precisely,
$\widetilde L$ is the self-adjoint extension associated with the
quadratic form
\begin{equation}\label{eq-QF-q}
\widetilde \l(u)=\l_\Omega(u_\Omega)+\l_K(u_K)\,,\quad u=u_\Omega\oplus
u_K\in L^2(\R^{2d}),\quad u_\Omega\in D(\l_\Omega),\quad u_K\in D(\l_K)\,.
\end{equation}
Since $\l_{\Omega}$ and $\l_{K}$ are strictly positive, we may speak of the 
inverse $\widetilde L^{-1}$ of $\widetilde L$. We have the 
following lemma.

\begin{lemma}
\label{lem:esssp}
With $\widetilde L$ and $L_\Omega$ defined as above:
\begin{enumerate}
\item $\sigma_{\text{ess}}(L_\Omega)
=\sigma_{\text{ess}}(\widetilde L)$.
\item $\lambda\in\sigma_{\text{ess}}\bigl(\widetilde{L}^{-1}\bigr)
\setminus\{0\}$ 
if and only if $\lambda\neq 0$ and 
$\lambda^{-1}\in\sigma_{\text{ess}}(L_\Omega)$.
\end{enumerate}
\end{lemma}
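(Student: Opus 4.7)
The plan is to treat the two assertions separately. For part (1), the key observation is that the quadratic form $\widetilde\l$ decomposes as an orthogonal sum, so $\widetilde L$ is literally the direct sum $L_\Omega\oplus L_K$ with respect to the decomposition $L^2(\R^{2d})=L^2(\Omega)\oplus L^2(K)$. By the standard formula for the (essential) spectrum of a direct sum of self-adjoint operators,
\[
\sigma_{\text{ess}}(\widetilde L)=\sigma_{\text{ess}}(L_\Omega)\cup\sigma_{\text{ess}}(L_K),
\]
so it suffices to show that $\sigma_{\text{ess}}(L_K)=\emptyset$, i.e.\ that $L_K$ has compact resolvent.

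To see this, recall that $K^\circ$ is a bounded Lipschitz domain and $\Ab_0$ is smooth, so on $K^\circ$ the magnetic Sobolev norm is equivalent to the usual $H^1$-norm, giving $H^1_{\Ab_0}(K^\circ)=H^1(K^\circ)$ as sets with equivalent norms. The Robin boundary term $\int_{\partial\Omega}(-\tau)|u|^2\,dS$ is controlled via the trace theorem by the $H^1$-norm and is relatively form-compact (since $\tau\in L^\infty$ and the trace map $H^1(K^\circ)\to L^2(\partial K)$ is continuous with compact composition into $H^{-1/2}(\partial K)$-pairings). Combined with the Rellich--Kondrachov compact embedding $H^1(K^\circ)\hookrightarrow L^2(K)$, this shows that the form domain of $L_K$ embeds compactly into $L^2(K)$. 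By the standard criterion this forces $L_K$ to have compact resolvent, hence empty essential spectrum, proving~(1).

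For part~(2), the standing assumption is that $\widetilde\l$ is strictly positive, so $0\notin\sigma(\widetilde L)$ and $\widetilde L^{-1}$ is a bounded self-adjoint operator on $L^2(\R^{2d})$. The spectral mapping theorem for the essential spectrum of a bounded self-adjoint operator under the continuous function $t\mapsto 1/t$ on $\sigma(\widetilde L)\subset(0,\infty)$, equivalently a direct application of Weyl sequences transported by the bounded functional calculus, yields
\[
\sigma_{\text{ess}}(\widetilde L^{-1})\setminus\{0\}=\bigl\{\lambda^{-1}:\lambda\in\sigma_{\text{ess}}(\widetilde L)\bigr\}.
\]
Combining this identity with part~(1) gives exactly the equivalence claimed in~(2).

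The only genuine work is the compactness of the resolvent of $L_K$ in the Robin setting; the main (minor) obstacle there is verifying that the boundary term $\int_{\partial\Omega}(-\tau)|u|^2\,dS$ does not prevent the form domain from being compactly contained in $L^2(K)$, which is handled by the continuity of the trace map together with the $L^\infty$ bound on $\tau$. Everything else is either a direct sum decomposition or the spectral mapping theorem.
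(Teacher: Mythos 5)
Your argument is correct and follows essentially the same route as the paper: decompose $\widetilde L=L_\Omega\oplus L_K$, show $\sigma_{\text{ess}}(L_K)=\emptyset$ via compact resolvent of $L_K$, and apply the spectral mapping of the essential spectrum under $t\mapsto 1/t$ using $0\notin\sigma(\widetilde L)$. The only (inessential) difference is that you unpack the compact-resolvent step for $L_K$ by hand (equivalence of magnetic and ordinary $H^1$-norms on the bounded set $K$, form-compactness of the Robin boundary term, Rellich--Kondrachov), whereas the paper simply cites Grisvard, Theorem~1.4.3.2.
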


\begin{proof}
Since $\widetilde L=L_\Omega\oplus L_K$, then 
$\sigma(\widetilde L)=\sigma(L_\Omega)\cup\sigma(L_K)$. But $K$ is compact 
and has a Lipschitz boundary, hence $L_K$  has a compact resolvent 
by~\cite[Theorem $1.4.3.2$]{noffnoff}. 
Thus $\sigma_{\text{ess}}(L_K)=\varnothing$ and the first assertion in the 
lemma above follows. Moreover, $L_\Omega$ and $L_K$ are both strictly positive
by hypothesis, hence $0\not\in\sigma(\widetilde L)$. It is straight forward 
that
\[
\sigma_{\text{ess}}(\widetilde
L)=\Bigl\{\lambda\in\R\setminus\{0\}~:~\lambda^{-1}
\in\sigma_{\text{ess}}\bigl(\widetilde{L}^{-1}\bigr)\Bigr\}.
\]
\end{proof}

\subsection{Essential spectrum of $L_\Omega$} 

With the operator $\widetilde L$ introduced above, we can view $L_\Omega$ as a
perturbation of the Landau Hamiltonian $L$ in $\R^{2d}$ introduced in
\eqref{eq-LH}. Actually, we define $V:L^2(\R^{2d})\to L^2(\R^{2d})$ as
\begin{displaymath}
V=\widetilde L^{-1}-L^{-1}.
\end{displaymath}
Then we have the following result on the operator $V$.

\begin{lemma}
\label{lem:V}
The operator $V$ is positive and compact.
Moreover, for all $f,g\in L^2(\R^{2d})$,
\begin{equation}
\label{eq-V}
\langle f,Vg\rangle_{L^2(\R^{2d})}
=\int_{\partial\Omega}\partial_R u\cdot\overline{(v_\Omega-v_K)}\,d S\,,
\end{equation} 
where $u=L^{-1}f$ and $v=\widetilde{L}^{-1} g$.
\end{lemma}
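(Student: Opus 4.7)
The lemma has three ingredients: positivity of $V$, the boundary-integral identity~\eqref{eq-V}, and compactness of $V$. My plan is to use Lemma~\ref{lem-PR} for positivity, the magnetic Green's formula for the identity, and the identity itself to exhibit $V$ as a composition of compact boundary trace maps.

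For positivity, I would apply Lemma~\ref{lem-PR} with $A = \widetilde L$ and $B = L$. The form domain of $L$ is $H^1_{\Ab_0}(\R^{2d})$, which sits inside $H^1_{\Ab_0}(\Omega) \oplus H^1_{\Ab_0}(K) = D(\widetilde\l)$, giving the form-domain inclusion. For $u \in H^1_{\Ab_0}(\R^{2d})$, the boundary terms $\int_{\partial\Omega}\tau|u|^2\,dS$ in $\l_\Omega$ and $-\int_{\partial\Omega}\tau|u|^2\,dS$ in $\l_K$ cancel---this is precisely why the sign convention for the coupling constant on $K$ is chosen as $-\tau$---so $\widetilde\l(u) = \l(u)$, whence $\widetilde L^{-1} \geq L^{-1}$, i.e., $V \geq 0$. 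For the identity, I would fix $f, g \in L^2(\R^{2d})$ and put $u = L^{-1}f$, $v = \widetilde L^{-1}g = v_\Omega \oplus v_K$, so that $\langle f, Vg\rangle = \langle Lu, v\rangle - \langle u, \widetilde L v\rangle$. Splitting each inner product as an integral over $\Omega$ plus one over $K$ and applying the magnetic Green's formula on each (noting that the outward normal of $K$ is $-\nu_\Omega$), the Robin conditions on $v_\Omega$ and $v_K$ both reduce to $\partial_N v_j = -\tau v_j$, eliminating the $\partial_N v$-boundary terms; what remains collapses to $\int_{\partial\Omega}\partial_R u \cdot \overline{(v_\Omega - v_K)}\,dS$.

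For compactness, I would use the identity to factor $V$ through $L^2(\partial\Omega)$: with $T_2 f := \partial_R(L^{-1}f)|_{\partial\Omega}$ and $T_1 g := (v_\Omega - v_K)|_{\partial\Omega}$, the identity reads $\langle V f, g\rangle = \langle T_2 f, T_1 g\rangle_{L^2(\partial\Omega)}$. The mapping property $L^{-1}\colon L^2(\R^{2d}) \to H^2_{\mathrm{loc}}(\R^{2d})$ combined with the trace theorem shows that $T_2$ takes values continuously in $H^{1/2}(\partial\Omega)$, which embeds compactly into $L^2(\partial\Omega)$ by Rellich-Kondrachov on the compact manifold $\partial\Omega$, and the analogous argument handles $T_1$ via the form-domain trace $H^1_{\Ab_0}(\Omega) \to H^{1/2}(\partial\Omega)$. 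Hence $V = T_1^* T_2$ is compact. The main obstacle I expect is this last step: for Lipschitz $\partial\Omega$ and merely $L^\infty$ Robin coefficient, elements of $D(L_\Omega)$ need not be $H^2$ up to the boundary, so the compactness of $T_1$ has to be extracted from the form domain rather than from full elliptic regularity.
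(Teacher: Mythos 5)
Your proposal is correct and matches the paper's proof in all three parts: positivity via Lemma~\ref{lem-PR} with $A=\widetilde L$, $B=L$; the identity~\eqref{eq-V} via Green's formula together with the Robin conditions $\partial_R v_\Omega=\partial_R v_K=0$; and compactness by factoring $V$ through boundary-trace maps out of the form domains (the paper cites the Lions--Magenes compact-trace theorem for exactly this). One small inaccuracy worth flagging: for merely $L^\infty$ coefficient $\tau$, multiplication by $\tau$ does not preserve $H^{1/2}(\partial\Omega)$, so $T_2$ lands only in $L^2(\partial\Omega)$ rather than in $H^{1/2}(\partial\Omega)$ as you state --- but this is harmless, since compactness of $T_1$ alone (obtained from the form-domain trace, exactly as you observe) already makes $V=T_1^*T_2$ compact.
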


\begin{proof}
Notice that the form domain $H_{\Ab_0}^1(\R^{2d})$ of $L$ is included
in that of $\widetilde L$, and that for $u\in H^1_{\Ab_0}(\R^{2d})$, we
have
\begin{displaymath}
\widetilde \l(u)=\int_{\R^{2d}}|(\nabla-ib\Ab_0)u|^2\,d x\,.
\end{displaymath}
Invoking Lemma~\ref{lem-PR}, we get that the operator $V$ is
positive.

Let us establish the identity in~\eqref{eq-V}. Set $f=Lu$
and $g=\widetilde L v= L_\Omega v_\Omega\oplus L_K v_K$. Then
\begin{displaymath}
\langle f,Vg\rangle_{L^2(\R^{2d})}=\int_\Omega Lu\cdot
\overline{v_\Omega}\,d x+\int_KLu\cdot\overline{v_K}\,d x-\int_\Omega
u\cdot\overline{L_\Omega v_\Omega}\, d x-\int_K
u\cdot\overline{L_Kv_K}\, d x\,.
\end{displaymath}
The identity in~\eqref{eq-V} then
follows by integration by parts and by using the boundary conditions
$\partial_R v_\Omega=\partial_R v_K=0$.

Knowing that the zeroth order trace operators 
$H^{s_1}(K)\to H^{s_2}(\partial \Omega)$ 
and $H^{s_1}_{\text{loc}}(\Omega)\to H^{s_2}(\partial \Omega)$ 
are compact whenever 
$s_1>s_2+1/2>1/2$, cf. \cite[Theorem $9.4$, Chapter $1$]{lions}, 
we conclude from~\eqref{eq-V} that $V$ is a compact operator.
\end{proof}

The localization of $V$ to the boundary carried out in Lemma \ref{lem:V} is by 
now a common triviality used in studying boundary value problems, but a sensation 
around the time of its invention by Birman, see more in \cite{birman61a,birman61b}.
Theorem~\ref{thm-GKPone} follows as a corollary of Lemma~\ref{lem:V}:

\begin{cor}\label{corl:proof-mainthm}
It holds that
\begin{displaymath}
\sigma_{\text{ess}}(L_\Omega)=\bigl\{\Lambda_q~:~q\in\Nzero\bigr\}\,,
\end{displaymath}
and for all $\epsilon\in(0,b)$ and $q\in\Nzero$,
\begin{displaymath}
\sigma(L_\Omega)\cap(\Lambda_q,\Lambda_q+\varepsilon)
\quad\text{is finite.}
\end{displaymath}
\end{cor}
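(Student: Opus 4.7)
The plan is to assemble the corollary from the positivity and compactness of $V=\widetilde L^{-1}-L^{-1}$ (Lemma~\ref{lem:V}), the two abstract Lemmas~\ref{lem-PR} and~\ref{lem-biso}, and the translation between $\widetilde L$ and $L_\Omega$ provided by Lemma~\ref{lem:esssp} together with the orthogonal decomposition $\widetilde L=L_\Omega\oplus L_K$. For the essential-spectrum identity, compactness of $V$ combined with Weyl's theorem gives $\sigma_{\text{ess}}(\widetilde L^{-1})=\sigma_{\text{ess}}(L^{-1})$; since the Landau Hamiltonian $L$ has spectrum $\{\Lambda_q:q\in\Nzero\}$ tending to $+\infty$, this set equals $\{\Lambda_q^{-1}:q\in\Nzero\}\cup\{0\}$. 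Lemma~\ref{lem:esssp}(2) then maps the nonzero essential spectrum of $\widetilde L^{-1}$ bijectively onto $\sigma_{\text{ess}}(L_\Omega)$ via $\lambda\mapsto\lambda^{-1}$, producing the claimed equality $\sigma_{\text{ess}}(L_\Omega)=\{\Lambda_q:q\in\Nzero\}$.

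For the finiteness statement, I would apply Lemma~\ref{lem-biso} with $A=L^{-1}$, perturbation $V$ (compact and positive by Lemma~\ref{lem:V}), and $B=A+V=\widetilde L^{-1}$. Fix $q\in\Nzero$ and $\epsilon\in(0,b)$; since $\epsilon<b<2b=\Lambda_{q+1}-\Lambda_q$, one can choose $\alpha\in\bigl(\Lambda_{q+1}^{-1},(\Lambda_q+\epsilon)^{-1}\bigr)$. The spectrum of $L^{-1}$ in $(\alpha,\Lambda_q^{-1})$ is empty---in particular discrete and not accumulating at $\Lambda_q^{-1}$---so Lemma~\ref{lem-biso} yields that $\sigma(\widetilde L^{-1})\cap(\alpha,\Lambda_q^{-1})$ is discrete and does not accumulate at $\Lambda_q^{-1}$. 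The sub-interval $\bigl((\Lambda_q+\epsilon)^{-1},\Lambda_q^{-1}\bigr)$ sits inside this larger interval with its lower endpoint $(\Lambda_q+\epsilon)^{-1}$ as an interior point; combined with discreteness and non-accumulation at the upper endpoint, the spectrum of $\widetilde L^{-1}$ in the sub-interval is therefore finite. Inverting, the spectrum of $\widetilde L$ in $(\Lambda_q,\Lambda_q+\epsilon)$ is finite, and since $\widetilde L=L_\Omega\oplus L_K$ with $L_K$ of compact resolvent (finitely many eigenvalues in any bounded interval), the same holds for $L_\Omega$.

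The argument is essentially organizational once Lemma~\ref{lem:V} is in place; the only mildly delicate step is the passage from the Birman--Solomjak conclusion ``discrete and non-accumulating at the upper endpoint'' to genuine finiteness on the target interval. Working on a slightly enlarged interval, so that the lower endpoint of the target interval becomes an \emph{interior} point of the one to which Lemma~\ref{lem-biso} is applied, lets discreteness automatically rule out accumulation at that lower endpoint and reduces the claim to the upper-endpoint statement the lemma supplies.
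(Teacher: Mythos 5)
Your proof is correct and follows the same route as the paper: Weyl's theorem applied to $\widetilde L^{-1}=L^{-1}+V$ together with Lemma~\ref{lem:esssp} for the essential spectrum, and Lemma~\ref{lem-biso} applied to $V=\widetilde L^{-1}-L^{-1}$ for the finiteness. You are in fact slightly more careful than the paper at the last step, by working on an enlarged interval $(\alpha,\Lambda_q^{-1})$ with $\alpha\in\bigl(\Lambda_{q+1}^{-1},(\Lambda_q+\epsilon)^{-1}\bigr)$ so that discreteness of the spectrum there, together with non-accumulation at $\Lambda_q^{-1}$, genuinely forces finiteness on the target sub-interval; the paper's text jumps directly to ``finite.''
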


\begin{proof}
Invoking Lemma~\ref{lem:esssp}, it suffices to prove that
\[
\sigma_{\text{ess}}\bigl(\widetilde{L}^{-1}\bigr)\setminus \{0\}
=\bigl\{\Lambda_q^{-1}~:~q\in\Nzero\bigr\}
\]
in order to get the result concerning the essential spectrum of
$L_\Omega$. Notice that $\widetilde{L}^{-1}=L^{-1}+V$ with $V$ a compact
operator. Hence by Weyl's theorem, 
$\sigma_{\text{ess}}\bigl(\widetilde{L}^{-1}\bigr)
=\sigma_{\text{ess}}(L^{-1})$. But we know from
Section~\ref{sec:landau} that $\sigma_{\text{ess}}(L^{-1})\setminus\{0\}
=\bigl\{\Lambda_q^{-1}~:~q\in\Nzero\bigr\}$ as was required to prove.

Since the operator $V$ is compact and positive, invoking Lemma~\ref{lem-biso}, 
we get that 
$\sigma\bigl(\widetilde{L}^{-1}\bigr)
\cap(\Lambda_q^{-1}-\varepsilon,\Lambda_q^{-1})$ is finite. This implies 
that $\sigma(L_\Omega)\cap(\Lambda_q,\Lambda_q+\epsilon)$ is finite.
\end{proof}

\section{Proof of Theorem~\ref{thm-GKPtwo}}
\label{sec:prooftwo}

In this section we prove Theorem~\ref{thm-GKPtwo}. The main idea of the proof is, 
as mentioned above, to reduce the spectral asymptotics in~\eqref{eq-thm-2} 
and~\eqref{eq-thm-higherd} to a similar asymptotics for Toeplitz operators on 
the Landau levels from \cite{fipu,mero}. 
The reduction to Toeplitz operators is by means of localizing to the boundary. 
The localization to the boundary is carried out using the identity~\eqref{eq-V}.

\subsection{The spectrum of certain Toeplitz operators}
\label{subsec:specofcetrain} 

For $q\in\Nzero$ and a measurable set $U\subset\R^{2d}$ the Toeplitz operator
$S_q^U$ is defined by
\begin{equation}\label{eq-toep}
S_q^U=P_q\chi_U P_q\quad \text{in }L^2(\R^{2d})\,.
\end{equation}
Here $\chi_U$ is the characteristic function of $U$. If $U$ is bounded, 
Arzela-Ascoli's theorem implies that $\chi_U P_q$ is a compact operator, 
because Cauchy estimates for holomorphic functions can be generalized to 
the Landau levels. In particular, the Toeplitz operator $S_q^U$ is compact. We 
state the following deep results on these Toeplitz operators. 

\begin{thm}[{\cite[Lemma~3.2]{fipu}}]
\label{lem:toepspectrum}
Assume that $U\subseteq \R^2$ is a bounded domain with Lipschitz boundary. 
Given $q\in\Nzero$, denote by $s_1^{(q)}\geq s_2^{(q)}\geq
\ldots$ the decreasing sequence of eigenvalues of $S^U_q$. Then,
\begin{equation*}
\lim_{j\to{+\infty}}\big(j! s_j^{(q)}\big)^{1/j} =
\frac{b}{2}\left(\kap(U)\right)^2.
\end{equation*}
\end{thm}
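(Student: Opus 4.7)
The plan is to reduce the eigenvalue asymptotics of $S_q^U = P_q\chi_U P_q$ to a classical potential-theoretic statement about Toeplitz-type operators on spaces of entire functions, in which the logarithmic capacity enters through the asymptotic behavior of extremal polynomials. The nonzero eigenvalues of $S_q^U$ coincide with those of the positive compact integral operator $T_q := \chi_U P_q \chi_U$ on $L^2(U)$, whose kernel is the reproducing kernel of $\mathcal{L}_q$ restricted to $U\times U$. This kernel is explicit --- a Laguerre polynomial times a Gaussian factor --- so the problem becomes purely complex-analytic on $U$.

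First I would settle the base case $q=1$. Here $\mathcal{L}_1$ consists precisely of functions of the form $f(z)e^{-b|z|^2/4}$ with $f$ entire, so dividing out the Gaussian weight yields an isometric isomorphism $\mathcal{L}_1\cong \mathcal{F}_b$ with the Bargmann--Fock space $\mathcal{F}_b=\{f\text{ entire}:\int|f|^2 e^{-b|z|^2/2}\,dA<\infty\}$. Under this identification $S_1^U$ becomes the compression to $\mathcal{F}_b$ of multiplication by $\chi_U$ (with the Gaussian weight absorbed), and $s_j^{(1)}$ measures, via the min--max principle, how sharply an element of $\mathcal{F}_b$ can concentrate in $U$ within a subspace of codimension $j-1$. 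The upper bound $\limsup (j! s_j^{(1)})^{1/j}\leq \tfrac{b}{2}\kap(U)^2$ is obtained by testing on the orthogonal complement of the span of the first $j-1$ normalized monomials $z^n/\sqrt{n!}$ and invoking Chebyshev's characterization $\kap(U)=\lim_n \|T_n\|_{L^\infty(U)}^{1/n}$ for monic degree-$n$ extremal polynomials. The matching lower bound is obtained by constructing Fekete-type polynomials via the equilibrium measure of $U$, which realize the capacity, and using them as explicit trial functions in the min--max principle.

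For a higher Landau level $q\geq 2$, I would propagate the result up using the creation operators $\overline{Q}$ from~\eqref{eq-creann}. A suitably normalized power $\overline{Q}^{q-1}$ intertwines $\mathcal{L}_1$ and $\mathcal{L}_q$ isometrically, and the commutator $[\overline{Q},\chi_U]$, being concentrated near $\partial U$ and of lower effective order, only produces corrections that do not affect the $j$-th root limit. A standard singular-value perturbation bound (of Ky Fan type) then transports the asymptotic from $q=1$ to general $q$.

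The hard part will be pinning down the precise constant $\tfrac{b}{2}\kap(U)^2$ in the base case. The polynomial-approximation upper bound is essentially algebraic once Chebyshev's identity is invoked term by term. The matching lower bound is more delicate: one needs, for each $j$, a concrete trial vector in $\mathcal{L}_1$ whose $L^2$-mass concentrates on $U$ at precisely the capacity-dictated rate. This is where genuine potential theory must enter --- equilibrium measure, Green function with pole at infinity, and the transfinite-diameter characterization of $\kap(U)$ --- and it is also the step that forces the Lipschitz regularity of $\partial U$, so that the equilibrium potential is well behaved up to the boundary and Fekete polynomials give sharp $L^\infty$-estimates on $U$.
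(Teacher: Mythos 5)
This is a result the paper does not prove but cites as known (Filonov--Pushnitski, \cite[Lemma~3.2]{fipu}), so there is no in-paper proof to compare against; I will instead assess your outline on its own terms and against the known argument.

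For the lowest Landau level $q=1$ your outline is essentially the argument used in the literature: pass to the Bargmann--Fock space, view $S_1^U$ as the Toeplitz operator with symbol $\chi_U$, then get the upper bound from the monomial filtration together with the $n$th-root Chebyshev characterization of $\kap(U)$, and the matching lower bound from extremal (Fekete-type) polynomials constructed from the equilibrium measure. That part is sound, and your bookkeeping of the normalization $\|z^n\|^2_{\mathcal{F}_b}\sim n!\,(2/b)^n$ against $\|T_n\|^{2/n}_{L^\infty(U)}\to\kap(U)^2$ does produce the constant $\tfrac{b}{2}\kap(U)^2$.

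The step to higher Landau levels is where there is a genuine gap. You propose to intertwine $\mathcal{L}_1$ with $\mathcal{L}_q$ by a normalized creation operator $J=c\,\overline{Q}^{\,q-1}$ and then absorb the difference between $J^*\chi_U J$ and $P_1\chi_U P_1$ by a Ky Fan singular-value bound, on the grounds that $[\overline{Q},\chi_U]$ is ``concentrated near $\partial U$ and of lower effective order.'' But $\chi_U$ is a characteristic function, so $[\overline{Q},\chi_U]$ is a multiple of $\partial_z\chi_U$, a surface distribution on $\partial U$, and the compression of such a boundary measure to $\mathcal{L}_1$ is a Toeplitz-type operator whose eigenvalues \emph{also} decay at the rate $(b/2)^j\kap^{2j}/j!$ --- with the \emph{same} constant, since $\kap(\partial U)=\kap(U)$ for a Lipschitz domain. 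Thus the correction is not of lower order in the relevant $(j!\,s_j)^{1/j}$ scale; a Ky Fan splitting $s_j(A+B)\le s_{\lceil j/2\rceil}(A)+s_{\lfloor j/2\rfloor+1}(B)$ would inflate the constant rather than preserve it. The actual treatment of $q\ge 2$ in the literature does not proceed by perturbing off $q=1$: it works directly with the reproducing kernel of $\mathcal{L}_q$, which is the Fock kernel multiplied by a Laguerre polynomial in $b|z-w|^2$, and reruns the polynomial-approximation argument, combined with a sandwiching of $U$ between slightly smaller and slightly larger domains to control the polynomial prefactor. You would need to replace your commutator step by such an argument (or by a genuinely sharper estimate showing the boundary contribution has strictly smaller capacity constant, which is false as stated) before the outline closes.
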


\begin{thm}[{\cite[Proposition~7.1]{mero}}]
\label{lem:toepspectrumhigherd}
Assume that $U\subseteq \R^{2d}$ is a bounded domain. Given $q\in\mathbb N$, 
we let $n\bigl(\epsilon,S^U_q\bigr)$ denote the number of eigenvalues of 
$S^U_q$ greater than $\epsilon$. Then
\[
n\bigl(\epsilon,S^U_q\bigr)\sim \begin{pmatrix}q+d-1\\d-1\end{pmatrix} 
\frac{1}{d!}\left(\frac{|\log\epsilon|}{\log|\log \epsilon|}\right)^d
\quad \text{as}\quad\epsilon\to 0^+.
\]
\end{thm}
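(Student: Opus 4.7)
The plan is to transfer the spectral problem to a weighted space of holomorphic functions and read off eigenvalue asymptotics from dimension counts of polynomial spaces. I would proceed in three stages.

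First, I would reduce to the lowest Landau level. The space $\mathcal{L}_1 \subset L^2(\R^{2d})$ is unitarily equivalent to the Bargmann--Fock space $\mathcal{F}_d$ of entire functions $f:\mathbb{C}^d \to \mathbb{C}$ with $\int_{\mathbb{C}^d} |f(z)|^2 e^{-b|z|^2/2}\,dL(z)<\infty$, via the isometry $f(z) \mapsto f(z)\,e^{-b|z|^2/4}$. Under this identification, $S_1^U = P_1 \chi_U P_1$ becomes the Fock--Toeplitz operator $T_U = P_{\mathcal{F}}\,\chi_U\, P_{\mathcal{F}}$. For higher Landau levels I would exploit the creation--annihilation calculus from \eqref{eq-creann}: since $[Q_j,\overline{Q}_k]=2b\delta_{jk}$ and $Q_j\psi=0$ for $\psi\in\mathcal{L}_1$, a direct computation gives $L\overline{Q}^\beta\psi = \Lambda_{|\beta|+1}\overline{Q}^\beta\psi$ for $\psi\in\mathcal{L}_1$, so $\mathcal{L}_q$ decomposes as an orthogonal sum of images $\overline{Q}^\beta\mathcal{L}_1$ over multi-indices $\beta\in\N^d$ with $|\beta|=q-1$. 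The combinatorial count of such $\beta$ is what produces the binomial prefactor in the main term.

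Next, I would compute matrix entries of $T_U$ in the orthonormal monomial basis $e_\alpha(z) = c_\alpha z^\alpha$ of $\mathcal{F}_d$, where $c_\alpha$ is the standard normalization involving $\sqrt{\alpha!}$. The diagonal entries
\[
\langle T_U e_\alpha, e_\alpha\rangle = c_\alpha^2 \int_U |z^\alpha|^2 e^{-b|z|^2/2}\,dL(z)
\]
decay super-exponentially in $|\alpha|$ for bounded $U$, because $|z^\alpha|^2 e^{-b|z|^2/2}$ concentrates on the sphere of radius $\sqrt{2|\alpha|/b}$, which leaves any fixed compact set once $|\alpha|$ is large. One obtains the leading behaviour $\log\langle T_U e_\alpha,e_\alpha\rangle \sim -|\alpha|\log|\alpha|$. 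Ordering singular values decreasingly, the $j$-th singular value $s_j$ is governed by the multi-index $\alpha_j$ of ``rank'' $j$ in the monomial ordering, and combining with $\#\{\alpha\in\N^d:|\alpha|\leq N\}\sim N^d/d!$ one is led to $\log s_j \sim -\tfrac{1}{d}(d!\,j)^{1/d}\log j$. Inverting this gives $n(\epsilon, T_U)\sim \tfrac{1}{d!}(|\log\epsilon|/\log|\log\epsilon|)^d$ for $q=1$, and the first-step reduction supplies the combinatorial prefactor for general $q$.

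The main obstacle is upgrading the diagonal-entry analysis to a genuine two-sided eigenvalue asymptotic: the monomials $e_\alpha$ are generically not eigenfunctions of $T_U$, and off-diagonal matrix entries are not a priori negligible, so one cannot merely read $s_j$ off the diagonal. I would control this via a Berezin--Lieb-type trace inequality of the form $\mathrm{tr}\,f(T_U) \leq \int_{\mathbb{C}^d} f\bigl(\widetilde{\chi}_U(z)\bigr)\,d\mu(z)$ for convex $f$ (yielding the upper bound on the counting function), paired with a min--max lower bound obtained by testing $T_U$ on finite-dimensional subspaces spanned by suitably truncated monomial families. This two-sided matching, which delivers the sharp constant $\tfrac{1}{d!}$, is the nontrivial complex-analytic input: realized for $d=1$ by the arguments of Filonov--Pushnitski in \cite{fipu} and extended to $d\geq 1$ by Melgaard--Rozenblum in \cite{mero}.
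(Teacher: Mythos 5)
The paper does not actually prove this theorem: it is a result of Melgaard and Rozenblum, quoted and cited as \cite[Proposition~7.1]{mero}, and the final paragraph of your proposal likewise defers the decisive two-sided matching to \cite{fipu,mero}. So neither the paper nor your sketch contains an independent argument; you have simply reconstructed a plausible high-level route (Bargmann transform onto the Fock space, raising-operator decomposition of $\mathcal{L}_q$, diagonal-entry heuristics, Berezin--Lieb upper bound with a min--max lower bound) to the result that the paper merely cites. Granting that, two steps of your sketch contain genuine gaps.

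First, the combinatorial count does not produce the prefactor as stated. With the paper's convention $\Lambda_q=(2(q-1)+d)b$ for $q\in\Nzero$, the number of multi-indices $\beta\in\N^d$ with $|\beta|=q-1$ is $\binom{(q-1)+d-1}{d-1}=\binom{q+d-2}{d-1}$, not $\binom{q+d-1}{d-1}$; these coincide only when $d=1$. The $q=1$ sanity check confirms this: $\mathcal{L}_1$ is a single copy of the Bargmann--Fock space, whose Toeplitz operators satisfy $n(\epsilon)\sim\frac{1}{d!}\bigl(|\log\epsilon|/\log|\log\epsilon|\bigr)^d$, i.e.\ prefactor $1=\binom{d-1}{d-1}$, whereas the displayed coefficient would give $\binom{d}{d-1}=d$. (The written formula evidently carries over a zero-based Landau-level index from \cite{mero}; your count is the internally consistent one.) Second, and more seriously, even with the correct count the decomposition $\mathcal{L}_q=\bigoplus_{|\beta|=q-1}\overline{Q}^\beta\mathcal{L}_1$ is not invariant under $S_q^U=P_q\chi_U P_q$: multiplication by $\chi_U$ mixes the sectors $\overline{Q}^\beta\mathcal{L}_1$, so $S_q^U$ is not block diagonal in this decomposition and one cannot simply multiply the lowest-level asymptotics by the number of $\beta$'s. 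Showing that the cross terms do not change the leading asymptotics of $n(\epsilon,S^U_q)$ is precisely the nontrivial content of \cite{mero}, and your outline asserts it rather than proving it.
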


The reader will recognize the structure of these results and notice that our
main results look very much like them. Indeed, our main task will be to reduce
our situation so that these results can be applied.

\subsection{The resolvent of the Landau Hamiltonian}
\label{ressub}

Since $L$ is strictly positive, $L^{-1}$ is a bounded operator in $L^2(\R^{2d})$
with range $D(L)$. Furthermore, $L^{-1}$ is an operator with an integral
kernel that we denote by $G_0$. This integral kernel is well-known
(see~\cite{si2}) to be
\begin{equation}
\label{gzeroexpression}
G_0(z,\zeta)=\frac{2b^{d-1}}{(4\pi)^d} e^{b(\overline{z}
\cdot \zeta-\overline{\zeta}\cdot z)/4}I\left(\frac{b|z-\zeta|^2}{4}\right),
\end{equation}
where 
\[
I(s)=\int_0^{+\infty} \frac{e^{-s\coth(t)}}{\sinh^d(t)}\,dt.
\]

\begin{remark}
The formula for $G_0(x,y)$ when $d=1$ is more commonly known:
\begin{equation}
\label{eq-kernel}
G_0(x,y)=\int_0^{+\infty} \frac{b}{2\pi\sinh(bs)}
\exp\Bigl(\frac{ib}{2}x\wedge y
-\frac{b}{4\tanh(bs)}|x-y|^2 \Bigr)\,ds,
\end{equation}
where $x\wedge y=x_1y_2-x_2y_1=(\overline{z}\cdot \zeta
-\overline{\zeta}\cdot z)/2$ 
if $z=x_1+ix_2$, $\zeta=y_1+iy_2$.
\end{remark}

\begin{lemma}
\label{lem:kernel}
$L^{-1}$ is an integral operator with kernel $G_0(z,\zeta)$ that has the
 following singularity at the diagonal $z=\zeta$: There exist 
$a_j\in C^\infty(\R^{2d}\times \R^{2d})$, for 
$j\in\Nzero$, and $b_j\in C^\infty(\R^{2d}\times \R^{2d})$, for 
$j\in\Nzero$ with $j\geq \max(1,d-1)$, such that for any large enough $N$, 
there exists a function 
$\tilde{a}_{N+1}\in C^{N}(\R^{2d}\times \R^{2d})$ such that 
\begin{align}
G_0(z,\zeta) &=\frac{1}{2\pi} \log\bigl(\frac1{|z-\zeta|}\bigr)
+\sum_{j=1}^Na_j(z,\zeta)|z-\zeta|^{2j}\nonumber\\
&\quad+\sum_{j=1}^Nb_j(z,\zeta)|z-\zeta|^{2j}\log|z-\zeta|+
\tilde{a}_N(z,z-\zeta), \quad d=1\nonumber\\
G_0(z,\zeta) &=\frac{\Gamma(d-1)}{2\pi^d} |z-\zeta|^{2-2d}
+\sum_{j=1}^Na_j(z,w)|z-\zeta|^{2-2d+2j} \label{eq:as}\\
&\quad+\sum_{j=d-1}^Nb_j(z,\zeta)|z-\zeta|^{2-2d+2j}\log|z-\zeta|+
\tilde{a}_N(z,z-\zeta), \quad d>1.\nonumber
\end{align}
The corresponding expansions, obtained by the term-wise differentiation, 
exist also for $\partial_N G_0(z,w)$ whenever 
$z,w\in \partial \Omega$. Moreover, $G_0(z,w)$ decays as a Gaussian as 
$|z-w|\to{+\infty}$ uniformly in both $z$ and $w$.
\end{lemma}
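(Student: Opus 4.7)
The plan is to reduce both the singular expansion at the diagonal and the Gaussian decay off-diagonal to classical properties of the modified Bessel function $K_\nu$. First I perform the substitution $u=\coth(t)$ in the integral defining $I(s)$, which maps $(0,+\infty)$ onto $(1,+\infty)$ with $dt=-du/(u^2-1)$ and $\sinh(t)=(u^2-1)^{-1/2}$, giving
\begin{equation*}
I(s)=\int_1^{+\infty}e^{-su}(u^2-1)^{d/2-1}\,du.
\end{equation*}
Comparing this with the standard integral representation
\begin{equation*}
K_\nu(s)=\frac{\sqrt{\pi}(s/2)^\nu}{\Gamma(\nu+1/2)}\int_1^{+\infty}e^{-su}(u^2-1)^{\nu-1/2}\,du,\qquad \mathrm{Re}(\nu+1/2)>0,
\end{equation*}
I identify $\nu=(d-1)/2$ and obtain $I(s)=\frac{\Gamma(d/2)}{\sqrt{\pi}}(2/s)^{(d-1)/2}K_{(d-1)/2}(s)$, which reduces every quantitative question about $G_0$ to known asymptotic information about $K_\nu$.

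For the near-diagonal expansion I invoke the classical small-argument expansion of $K_\nu(s)$. When $d$ is even, $\nu$ is a positive half-integer and $K_\nu(s)$ equals $e^{-s}$ times an elementary Laurent polynomial in $s$, with no logarithms; when $d$ is odd, $\nu$ is a non-negative integer and the expansion additionally involves logarithmic terms of the form $s^{\nu+2k}\log s$ starting at $k=0$. Truncation at order $N$ yields a $C^N$ remainder of order $s^N$. Multiplying by the factor $(2/s)^{(d-1)/2}$ and by the smooth (in fact entire) prefactor $\frac{2b^{d-1}}{(4\pi)^d}e^{b(\bar z\cdot\zeta-\bar\zeta\cdot z)/4}$, and substituting $s=b|z-\zeta|^2/4$, converts powers $s^k$ into $|z-\zeta|^{2k}$ times smooth functions of $(z,\zeta)$, and $\log s$ into $2\log|z-\zeta|$ plus smooth pieces. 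Grouping terms of the same homogeneity yields the coefficients $a_j(z,\zeta)$ and the logarithmic coefficients $b_j(z,\zeta)$, with the latter starting at $j=\max(1,d-1)$ as dictated by where log-type contributions first arise in the Bessel expansion after the shift by $(2/s)^{(d-1)/2}$, together with a remainder $\tilde a_N(z,z-\zeta)$ of class $C^N$.

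The Gaussian decay is immediate from the large-$s$ asymptotic $K_\nu(s)\sim\sqrt{\pi/(2s)}\,e^{-s}$ together with the fact that the exponent $b(\bar z\cdot\zeta-\bar\zeta\cdot z)/4$ is purely imaginary, so $|e^{b(\bar z\cdot\zeta-\bar\zeta\cdot z)/4}|=1$: substituting $s=b|z-\zeta|^2/4$ yields $|G_0(z,\zeta)|=O(|z-\zeta|^{-d}e^{-b|z-\zeta|^2/4})$ uniformly in both variables, with analogous bounds for every derivative. The expansion for $\partial_N G_0$ on $\partial\Omega\times\partial\Omega$ then follows by termwise application of $\nu_\Omega\cdot(\nabla-ib\Ab_0)$ to the diagonal expansion; since the $a_j$, $b_j$ and the magnetic factor are $C^\infty$, only the explicit power and logarithmic factors generate new singular contributions, shifting the exponents by one and lowering the remainder regularity by one order.

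The main obstacle I anticipate is the bookkeeping in the odd-$d$ case, where the logarithms in the Bessel expansion must be tracked carefully through multiplication by $s^{-(d-1)/2}$ and the change of variable $s=b|z-\zeta|^2/4$ in order to confirm that the $b_j$-terms really start at $j=\max(1,d-1)$ and not earlier. Verifying that $\tilde a_N(z,z-\zeta)$ belongs to $C^N$ in both slots then reduces to the standard remainder estimate for the truncated small-argument expansion of $K_\nu$, combined with the observation that the prefactor is jointly smooth in $(z,\zeta)$.
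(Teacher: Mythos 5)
Your proof is correct, and it takes a genuinely different and arguably cleaner route than the paper. The paper splits $I(s)=I_0(s)+I_\infty(s)$ by integrating over $[0,1]$ and $[1,\infty)$ respectively, then substitutes $u=\coth(t)$ in $I_0$ and Taylor-expands $\bigl(1-u^{-2}\bigr)^{(d-2)/2}$ term by term, reducing everything to incomplete Gamma functions $g_m(t)=\int_t^\infty e^{-u}u^m\,du$ whose small-$t$ expansions (including the $\log t$ term for $m<0$) are assembled into the final coefficients $c_j$, $c_j'$, $d_j$. You instead apply the same change of variables to the \emph{entire} integral, recognize it as the standard integral representation of $K_\nu$ with $\nu=(d-1)/2$, and thereby identify $I(s)=\frac{\Gamma(d/2)}{\sqrt\pi}(2/s)^{(d-1)/2}K_{(d-1)/2}(s)$ exactly; this reduces the diagonal singularity, the presence or absence of logarithms (distinguishing $d$ even, half-integer order, from $d$ odd, integer order), and the Gaussian decay all to classical tabulated facts about $K_\nu$. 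Your approach buys conciseness and robustness against algebraic slips; the paper's buys self-containment. Two small imprecisions worth tidying: truncating the Bessel expansion at order $N$ gives a remainder that is only $O(s^N)$, so you should truncate at a sufficiently high order $M=M(N)$ to guarantee the remainder is genuinely $C^N$ jointly in $(z,z-\zeta)$; and for the $\partial_N G_0$ statement, differentiating $|z-w|^{2j}$ normally gives $2j|z-w|^{2j-2}(z-w)\cdot\nu$, and since $(z-w)\cdot\nu=O(|z-w|^2)$ when both points lie on the smooth surface $\partial\Omega$, the singularity order is in fact preserved rather than shifted by one — this is the reason the double layer $B$ has the same order $-1$ as the single layer $A$ in Lemma~\ref{lem:pseudo-zero}. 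Neither point affects the correctness of your argument, and the paper is equally brief on the $\partial_N G_0$ claim.
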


The proof of this Lemma is of a computational nature and is deferred to 
Appendix~\ref{sec:expansion}, where also the asymptotic expansion is computed 
explicitly.

\begin{remark}
For $d=1$ the integral~\eqref{eq-kernel} can be expressed in terms of the 
Whittaker function
(see~\cite[Section~4.9, formula~(31)]{baer1} and~\cite[Chapter~6]{baer2}) as
\begin{align*}
G_0(x,y)&=\frac{\pi^{3/2}}{b}\Bigl(\frac{b}{8}|x-y|^2\Bigr)^{-3/4}
\exp\Bigl(\frac{ib}{2}x\wedge y\Bigr)\\
&\quad\times\Bigl[W_{\frac12,-\frac12}\Bigl(\frac{b}{2}|x-y|^2\Bigr)
+\frac12W_{-\frac12,-\frac12}\Bigl(\frac{b}{2}|x-y|^2\Bigr)\Bigr].
\end{align*}
Lemma~\ref{lem:kernel} follows in this case from asymptotic formulae for 
Whittaker functions~\cite{baer2}.
\end{remark}

\subsection{Boundary layer operators}
\label{subsec:boundary}

Recall that $K\subset\R^{2d}$ has been assumed to be a compact subset of 
$\R^{2d}$ with smooth boundary and that we defined the domain 
$\Omega=\R^{2d}\setminus K$. Since $\Omega$ 
and $K$ are complementary, the Hilbert space $L^2\bigl(\R^{2d}\bigr)$ is 
naturally decomposed as 
the orthogonal direct sum $L^2(\Omega)\oplus L^2(K)$ in the sense that any 
function $u\in L^2(\R^{2d})$ can be uniquely represented as 
$u_\Omega\oplus u_K$ where $u_\Omega$ and $u_K$ are the restrictions of $u$ 
to $\Omega$ and $K$ respectively.

We let $\Psi^*(\partial\Omega)$ denote the filtered algebra of 
classical\footnote{Also known under the name $1$-step polyhomogeneous.} 
pseudo-differential operators on the common boundary $\partial\Omega$ of 
$\Omega$ and $K$. For a reference 
on pseudo-differential operators, the reader is referred 
to~\cite[Chapter $5$]{agr2},~\cite[Chapter $18$]{horIII} 
or~\cite[Chapter I]{shubin}. We fix a classical pseudo-differential operator
$\tau\in \Psi^0(\partial\Omega)$. The proofs also works for $\tau\in \Psi^t(\partial\Omega)$,
for $t<1$ but becomes notationally more complicated. We restrict 
our attention to the case that $\tau$ is self-adjoint, in the following two subsections
this assumption is needed merely to simplify proofs. 

For the boundary considerations of this Section, more care in the analysis of 
the Robin boundary differential expressions on $\partial \Omega$ defined 
in~\eqref{eq:dNdR} is needed.
We note that by~\cite[Theorem~9.4, Chapter~1]{lions} the magnetic normal 
derivative, cf.~\eqref{eq:dNdR}, gives well defined continuous operators for 
$s\in \R\setminus \{\Z+\frac{1}{2}\}$ such that $s>3/2$:
\[
\partial_{N,\Omega}:H^s_{\text{loc}}(\Omega)\to H^{s-3/2}(\partial \Omega)
\quad\mbox{and}\quad 
\partial_{N,K}:H^s(K)\to H^{s-3/2}(\partial \Omega).
\]
We remind the reader that the normal derivative appearing in 
$\partial_{N,K}$ and $\partial_{N,\Omega}$ are both 
with respect to the unit \emph{outward} normal vector to the boundary of 
$\Omega$. Again following \cite[Theorem~9.4, Chapter~1]{lions}, the trace 
operators 
\[\gamma_{0,\Omega}:H^s_{\text{loc}}(\Omega)\to H^{s-1/2}(\partial \Omega)
\quad\mbox{and}\quad 
\gamma_{0,K}:H^s(K)\to H^{s-1/2}(\partial \Omega),\]
mapping a function to its boundary value, are continuous for $s\in \R\setminus \{\Z+\frac{1}{2}\}$ such that $s>1/2$. 
For $s\in \R\setminus \Z+\frac{1}{2}$ such that $s>3/2$ we define the Robin boundary operators on $\partial \Omega$,
following the expressions ~\eqref{eq:dNdR}, by means of
\begin{gather}
\label{eq-normalder}
\nonumber
\partial_{R,\Omega}:=
\partial_{N,\Omega}+\tau \gamma_{0,\Omega}
:H^s_{\text{loc}}(\Omega)\to H^{s-3/2}(\partial \Omega)
\quad\text{and}\\
\partial_{R,K}:=\partial_{N,K}+\tau \gamma_{0,K}
:H^s(K)\to H^{s-3/2}(\partial \Omega).
\end{gather}
As a rule, we suppress the $K$ and the $\Omega$ from the notation in these 
operators whenever the domain is clear from the context.
We sometimes write $(\partial_R)_x$ in order to stress that the differentiation 
in~\eqref{eq-normalder} is with respect to the variable $x$.

With $G_0(x,y)$ as in~\eqref{gzeroexpression}, we define the operators 
$\mathcal{A}$, $\mathcal{B}$, $A$ and $B$, acting on functions defined on 
$\partial\Omega$, as
\begin{equation}
\label{eq-boundaryop}
\begin{aligned}
\mathcal{A}u(x) &= \int_{\partial\Omega}
G_0(x,y)u(y)\, dS(y),\quad x\in\R^{2d}\setminus \partial\Omega,\\
\mathcal{B}u(x) &= \int_{\partial\Omega}
\bigl[({\partial_N})_y G_0(x,y)\bigr]u(y)\,d S(y),
\quad x\in\R^{2d}\setminus\partial\Omega,\\
Au(x) &= \int_{\partial\Omega}
G_0(x,y)u(y)\,d S(y),\quad x\in\partial\Omega,\\
Bu(x) &= \int_{\partial\Omega}
\bigl[({\partial_N})_y G_0(x,y)\bigr]u(y)\,d S(y),\quad x\in\partial\Omega.
\end{aligned}
\end{equation}

The potentials $\mathcal{A}$ and $\mathcal{B}$ are usually called the
single and double layer potentials. They satisfy $L\mathcal{A}u(x)=0$ and
$L\mathcal{B}u(x)=0$ in $\R^{2d}\setminus{\partial\Omega}$. We will write
limit relations at the boundary for these potentials. 
We refer to~\cite[Chapter 3, Section 12]{agrruss} 
where the corresponding potentials are considered for the Helmholtz 
operator, see also \cite[Section~5.7]{agr2} for the low-dimensional case. 
Since, according to Lemma~\ref{lem:kernel}, the Green functions for both $L$ 
and the Helmholtz operator are globally estimated pseudo-differential 
operators with the same asymptotics in the leading terms as $x-y\to0$, 
the limit relations in~\cite{agr2} apply here as well.
For all $x_0\in\partial\Omega$ it holds that
\begin{equation}
\label{eq:limitrelations}
\begin{gathered}
\begin{aligned}
\lim_{\substack{x\in K^\circ\\x\to x_0}} (\mathcal{A}u)(x)&=(Au)(x_0),\quad
&\lim_{\substack{x\in K^\circ\\x\to x_0}} 
(\mathcal{B}u)(x)&=\frac{1}{2}u(x_0)+(Bu)(x_0),
\\
\lim_{\substack{x\in \Omega\\x\to x_0}} (\mathcal{A}u)(x)&=(Au)(x_0),\quad
&
\lim_{\substack{x\in \Omega\\x\to x_0}} 
(\mathcal{B}u)(x)&=-\frac{1}{2}u(x_0)+(Bu)(x_0),
\end{aligned}\\
\begin{gathered}
\lim_{\substack{x\in \Omega\\x\to x_0}} \partial_N(\mathcal{A}u)(x)
-
\lim_{\substack{x\in K^\circ\\x\to x_0}} \partial_N(\mathcal{A}u)(x)
=u(x_0).\\
% Not needed?
%\lim_{\substack{x\in \Omega\\x\to x_0}} \partial_N(\mathcal{B}u)(x)
%-
%\lim_{\substack{x\in K^\circ\\x\to x_0}} \partial_N(\mathcal{B}u)(x)
%=0.
\end{gathered}
\end{gathered}
\end{equation}

We recall from~\cite{agrruss,agr2} that any function
$u=u_\Omega\oplus u_K\in H^s_{\text{loc}}(\Omega)\oplus H^s(K)$ 
exponentially decaying 
at $\infty$ and solving $Lu=0$ in $\Omega\dot{\cup} K^\circ$ admits the 
representation by the formulas
\[
\begin{aligned}
u&=\mathcal{B}(\gamma_{0,K}u_K)-\mathcal{A}\bigl(\partial_{N,K} u_K\bigr),
\quad \text{in $K^\circ$,}\\
u&=\mathcal{A}\bigl(\partial_{N,\Omega} u_\Omega)
-\mathcal{B}(\gamma_{0,\Omega}u_\Omega),
\quad \text{in $\Omega$.}
\end{aligned}
\]
Using the limit values~\eqref{eq:limitrelations}, we obtain the following
formulas connecting Dirichlet and Robin data at the boundary $\partial\Omega$:
\begin{equation}
\label{eq:boundaryrelations}
\begin{aligned}
\Bigl(B+A\tau-\frac12\Bigr)\gamma_{0,K}u_K &= A(\partial_{R,K} u_K),\\
\Bigl(B+A\tau+\frac12\Bigr)\gamma_{0,\Omega}u_\Omega 
&= A(\partial_{R,\Omega} u_\Omega).\\
\end{aligned}
\end{equation}

We will use these relations to define the Dirichlet to Robin and Robin to 
Dirichlet operators in Section~\ref{sec:DRRD} below. Before doing so, we present
some results on the operators $A$ and $B$. Recall that for $p\geq1$, 
the symmetrically normed ideal of weak Schatten class operators 
$\mathcal{J}_{p,w}(L^2(\partial\Omega))$ consists of those compact operators
$C$ on $L^2(\partial\Omega)$ whose singular values $\{\mu_k(C)\}_{k\in \N}$ 
behaves like $\mu_k(C)=O(k^{-1/p})$ as $k\to \infty$. See more in~\cite{simon}.

\begin{lemma}
\label{lem:pseudo-zero}
The operators $A$ and $B$ are classical pseudo-differential operators of order 
$-1$. Furthermore:
\begin{enumerate}
\item $A$ defines a self-adjoint operator on $L^2(\partial\Omega)$;
\item A is elliptic with constant principal symbol;
\item $A$ defines a Fredholm operator 
$A:L^2(\partial\Omega)\to H^1(\partial\Omega)$ whose index vanishes;
\item $A$ and $B$, considered as operators on $L^2(\Omega)$, belong to the weak 
Schatten class $\mathcal{J}_{2d-1,w}(L^2(\partial\Omega))$.
\end{enumerate}
\end{lemma}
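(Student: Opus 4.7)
The plan is to use the diagonal asymptotics of $G_0$ from Lemma~\ref{lem:kernel} to identify $A$ and $B$ with classical single and double layer potentials, and then invoke standard boundary-layer theory on the closed manifold $\partial\Omega$. First, I would observe that the leading singularity of $G_0(z,\zeta)$ as $\zeta\to z$ matches the fundamental solution of $-\Delta$ on $\R^{2d}$: the logarithm when $d=1$ and the Newton kernel $c_d|z-\zeta|^{2-2d}$ when $d>1$. Restriction of such a kernel to $\partial\Omega\times \partial\Omega$ yields a classical pseudo-differential operator of order $-1$ on the $(2d-1)$-dimensional closed manifold $\partial\Omega$, and the lower order terms in the expansion of Lemma~\ref{lem:kernel} contribute operators of strictly lower order; see for instance~\cite[Ch.~3, Sec.~12]{agrruss}. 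For $B$, a naive degree count on $({\partial_N})_y G_0$ would yield order $0$, but the well-known geometric cancellation — that the outward normal $\nu_y$ becomes nearly tangent to $x-y$ as $y\to x$ on a smooth boundary — effectively reduces the order by $1$, so that $B$ likewise lies in $\Psi^{-1}(\partial\Omega)$. This proves (1).

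Assertion (2) is immediate: since $L^{-1}$ is self-adjoint on $L^2(\R^{2d})$, its integral kernel satisfies $G_0(x,y)=\overline{G_0(y,x)}$, and restricting to $\partial\Omega\times\partial\Omega$ shows that $A$ is self-adjoint on $L^2(\partial\Omega)$. For (3), the principal symbol of $A$ is computed from the leading singularity of the kernel via the conormal Fourier transform; since this singularity depends only on the Euclidean distance $|z-\zeta|$, a direct calculation (or comparison with the Euclidean Laplacian, whose symbol $|\xi|^{-2}$ integrates along the conormal direction to a multiple of $|\xi|^{-1}$) gives $\sigma_{-1}(A)(x,\xi) = c/|\xi|$ for a positive constant $c$ independent of the base point $x\in\partial\Omega$. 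This symbol is nonvanishing, proving ellipticity.

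For (4), ellipticity of $A$ on the compact closed manifold $\partial\Omega$ immediately yields that $A:L^2(\partial\Omega)\to H^1(\partial\Omega)$ is Fredholm. To see the index vanishes, I would run a homotopy argument: choose $c>0$ so that $c(1-\Delta_{\partial\Omega})^{-1/2}$ has the same principal symbol as $A$, and consider the convex combination $A_t = (1-t)A + t\,c(1-\Delta_{\partial\Omega})^{-1/2}$ for $t\in[0,1]$. Each $A_t$ has principal symbol equal to that of $A$, so is elliptic of order $-1$, and the family $A_t:L^2(\partial\Omega)\to H^1(\partial\Omega)$ is norm-continuous, hence has constant Fredholm index. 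At $t=1$ the operator is invertible, so $\mathrm{ind}(A)=0$.

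For (5), I would use Weyl's law on $\partial\Omega$ combined with a standard order-counting trick. Writing $T\in\Psi^{-1}(\partial\Omega)$ as $T = C\cdot (1-\Delta_{\partial\Omega})^{-1/2}$ with $C = T\cdot (1-\Delta_{\partial\Omega})^{1/2}\in\Psi^0(\partial\Omega)$ bounded on $L^2$, the singular value inequality $\mu_k(CS)\leq \|C\|\mu_k(S)$ combined with Weyl's asymptotic $\mu_k((1-\Delta_{\partial\Omega})^{-1/2}) = O(k^{-1/(2d-1)})$ gives $\mu_k(T) = O(k^{-1/(2d-1)})$, i.e., $T\in \mathcal{J}_{2d-1,w}(L^2(\partial\Omega))$. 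Applied to $A$ and $B$, this yields (5). The main technical obstacle throughout is Step~1 — in particular verifying the cancellation that keeps $B$ in $\Psi^{-1}(\partial\Omega)$ — but this is a standard computation once the expansion in Lemma~\ref{lem:kernel} is in hand.
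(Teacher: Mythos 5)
Your proposal is correct and follows essentially the same route as the paper: order $-1$ from the diagonal asymptotics of $G_0$ (including the tangential cancellation for $B$), self-adjointness from $G_0(z,w)=\overline{G_0(w,z)}$, ellipticity and a base-point-independent principal symbol from the leading coefficient, and vanishing index plus the weak Schatten class via Weyl's law. The paper's proof is terser and just cites H\"ormander and the Weyl law, whereas you spell out the homotopy argument for $\ind(A)=0$ and the factorization $T=C(1-\Delta_{\partial\Omega})^{-1/2}$, but these are exactly the standard expansions of what the paper invokes, not a different method.
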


\begin{proof}
It follows directly from the asymptotics in Lemma~\ref{lem:kernel} that $A$ 
and $B$ are classical pseudo-differential operators of order $-1$, see for instance 
\cite[Theorem 18.2.8]{horIII}. As such, 
the Weyl law on $\partial\Omega$ implies that $A$ and $B$ belong to the weak 
Schatten class $\mathcal{J}_{2d-1,w}(L^2(\partial\Omega))$. It is also clear 
from Lemma~\ref{lem:kernel} that $\sigma_{-1}(A)=\frac{\Gamma(d-1)}{2\pi^d}$. 
Hence $A$ is elliptic. Furthermore, Equation~\eqref{gzeroexpression} implies 
that $G_0(z,w)=\overline{G_0(w,z)}$ so $A$ is self-adjoint on 
$L^2(\partial\Omega)$. 
Since $\sigma_{-1}(A)$ is a constant mapping, it is a lower order perturbation of
an invertible pseudo-differential operator and the statement 
$\ind(A:L^2(\partial \Omega)\to H^1(\partial \Omega))=0$ follows.
\end{proof}

\begin{lemma}
\label{lem:pseudo}
The elliptic operator $A$ defines an isomorphism
$A:L^2(\partial\Omega)\to H^1(\partial\Omega)$.
\end{lemma}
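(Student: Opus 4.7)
The plan is to leverage what Lemma~\ref{lem:pseudo-zero} already provides: $A$ is a Fredholm operator $L^2(\partial\Omega)\to H^1(\partial\Omega)$ of index zero. Hence it suffices to prove that $A$ is injective on $L^2(\partial\Omega)$, since then $\dim\Ker(A)=0$ and vanishing index forces surjectivity. The injectivity argument will be the classical single layer potential / jump relation argument, now performed relative to the shifted Landau Hamiltonian $L$.

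Suppose $u\in L^2(\partial\Omega)$ satisfies $Au=0$. Form the single layer potential $v:=\mathcal{A}u$ defined on $\R^{2d}\setminus \partial\Omega$ by~\eqref{eq-boundaryop}. Since $\partial\Omega$ is smooth and compact, and the kernel $G_0$ decays as a Gaussian at infinity uniformly in both variables by Lemma~\ref{lem:kernel}, the function $v$ belongs to $H^1_{\Ab_0}(\R^{2d}\setminus\partial\Omega)$ (in fact to $H^{3/2}_{\mathrm{loc}}$, by the standard mapping properties of layer potentials attached to pseudo-differential Green functions, cf.~\cite{agrruss,agr2}), and decays as a Gaussian as $|x|\to\infty$. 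Moreover $v$ satisfies $Lv=0$ pointwise in $K^\circ$ and in $\Omega$, since $L$ applied to $G_0(\cdot,y)$ vanishes off-diagonal.

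By the first two limit relations in~\eqref{eq:limitrelations}, the Dirichlet boundary values of $v$ from both sides coincide with $Au=0$. Consequently $v|_{K^\circ}$ lies in the form domain of the Dirichlet realization $L^D_K$ of $L$ on $K^\circ$ and satisfies $L^D_K v=0$; since $L$ has been shifted at the outset so that the form $\int |(\nabla-ib\Ab_0)\cdot|^2\,dx$ is strictly positive on $H^1_{\Ab_0}(\R^{2d})$ (see Subsection~\ref{subsectionextensions}), its restriction to $H^1_{0,\Ab_0}(K^\circ)$ is a fortiori strictly positive, and we conclude $v=0$ on $K^\circ$. The same reasoning, applied on the exterior $\Omega$ and using the Gaussian decay of $v$ at infinity to ensure $v|_\Omega\in H^1_{0,\Ab_0}(\Omega)$, yields $v=0$ on $\Omega$. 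Applying then the third limit relation in~\eqref{eq:limitrelations}, namely the jump of the normal derivative of the single layer potential, we obtain
\begin{equation*}
u(x_0)=\lim_{\substack{x\in\Omega\\ x\to x_0}}\partial_N v(x)-\lim_{\substack{x\in K^\circ\\ x\to x_0}}\partial_N v(x)=0-0=0,
\end{equation*}
so $u=0$, proving injectivity.

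The main obstacle I foresee is verifying that $v=\mathcal{A}u$ has sufficient regularity near $\partial\Omega$ for the limit relations~\eqref{eq:limitrelations} and the Dirichlet uniqueness argument to apply at the $L^2(\partial\Omega)$ level of density data; this is precisely the content of the boundary layer calculus established in~\cite{agrruss,agr2}, which applies here since the singularity of $G_0$ at the diagonal given in Lemma~\ref{lem:kernel} matches that of the Helmholtz Green function. Beyond this, the argument is entirely standard: Fredholmness plus index zero plus injectivity equals isomorphism.
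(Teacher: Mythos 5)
Your proposal follows essentially the same route as the paper: reduce to injectivity via the index-zero Fredholm property from Lemma~\ref{lem:pseudo-zero}, form the single layer potential, use the jump relations~\eqref{eq:limitrelations}, and conclude via uniqueness of the homogeneous Dirichlet problem on both sides. Two remarks on where the paper differs and resolves the concern you flag at the end. First, the paper begins by invoking elliptic regularity to reduce to the case $h\in C^\infty(\partial\Omega)$ (since $A$ is an elliptic pseudo-differential operator, any $L^2$ element of its kernel is automatically smooth); this sidesteps entirely the regularity issues you raise about whether $v=\mathcal{A}u$ with $u\in L^2(\partial\Omega)$ has enough regularity near $\partial\Omega$ for the trace and jump relations to apply. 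You correctly identify this as the weak point, but you should close it by making the same reduction rather than appealing to the mapping properties of layer potentials at the $L^2$ density level. Second, for uniqueness the paper does not invoke strict positivity of the Dirichlet form by appeal to a spectral shift — and indeed no shift of $L$ is needed — but computes directly via the factorization $L=\sum_j \overline{Q}_jQ_j+bd$ from~\eqref{eq-creann}, obtaining $0=bd\|v\|^2+\sum_j\|Q_jv\|^2$ after integration by parts, which gives $v\equiv0$ unconditionally since $bd>0$. Your appeal to the Dirichlet realization being strictly positive is correct in substance (the Dirichlet form on $K^\circ$ inherits the lower bound $\Lambda_1=db$ from the Landau Hamiltonian on all of $\R^{2d}$), but attributing it to the quadratic form shift of Subsection~\ref{subsectionextensions} is slightly off, since that shift concerns $\l_\Omega$ and $\l_K$ (with the Robin term $\tau$), not $L$ itself. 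With these two adjustments your argument coincides with the paper's.
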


\begin{proof}
To prove that $A$ is an isomorphism, we follow the proof 
of~\cite[Chapter~7, Proposition~11.5]{tay2} with the necessary modifications.
Since the index of $A:L^2(\partial\Omega)\to H^1(\partial\Omega)$ vanishes, 
it suffices to prove that the operator $A$ is injective. By elliptic regularity, it suffices 
to prove that $A:C^\infty(\partial\Omega)\to C^\infty(\partial\Omega)$ is 
injective.

Assume that $h\in C^\infty(\partial\Omega)$ with $Ah=0$. If we define 
$u\in C^\infty(K^\circ)$ by $u(x)=\mathcal{A}h(x)$, $x\in K^\circ$, then $u$ satisfies
\begin{equation*}
\begin{cases}
-(\nabla-ib\mathbf{A}_0)^2u = 0 & \text{in $K^\circ$}, \\
u = 0 & \text{on $\partial\Omega$.}
\end{cases}
\end{equation*}
We use~\eqref{eq-creann} and integrate by parts, to get
\begin{equation*}
0 = \bigl\langle -(\nabla-b\mathbf{A}_0)^2 u,u\bigr\rangle_{L^2(K)} 
= bd\|u\|_{L^2(K)}^2
+ \sum_{j=1}^{d}\|Q_ju\|_{L^2(K)}^2.
\end{equation*}
This implies that $u\equiv 0$ in $K$, i.e.
\begin{equation}
\label{eq-Ah0}
\mathcal{A}h(x)\equiv 0\quad \text{in $K^\circ$.}
\end{equation}

It follows from the limit relations~\eqref{eq:limitrelations} that
$\partial_N(\mathcal{A}h)(x)$ makes
a jump across the boundary $\partial\Omega$ of size $h$, so if we let
$w(x)=\mathcal{A}h(x)$, $x\in\Omega$, then it satisfies
\begin{equation}\label{eq-outer}
\begin{cases}
-(\nabla-ib\mathbf{A}_0)^2w = 0 & \text{in $\Omega$}, \\
\partial_N w = h & \text{on $\partial\Omega$.}
\end{cases}
\end{equation}
Since, again by~\eqref{eq:limitrelations}, $\mathcal{A}h$ does not jump 
across $\partial\Omega$, we see 
by~\eqref{eq-Ah0} that $w=0$ on $\partial\Omega$.

From the exponential decay of $G_0(x,y)$ as $|x-y|\to{+\infty}$ it follows that
$w(x)=O(|x|^{-N})$ as $|x|\to{+\infty}$ for all $N>0$. This also applies to all
derivatives of $w$.
Moreover $w$ is smooth. Hence we can integrate by parts in $\Omega$ to find
\begin{equation*}
0 = \bigl\langle -(\nabla-ib\mathbf{A}_0)^2 w,w\bigr\rangle_{L^2(\Omega)} 
= bd\|w\|_{L^2(\Omega)}^2 + \sum_{j=1}^{d}\|Q_jw\|_{L^2(\Omega)}^2,
\end{equation*}
and hence $w\equiv 0$ in $\Omega$. 
From~\eqref{eq-outer} we see that $h=0$ on $\partial\Omega$.
\end{proof}

\subsection{The Dirichlet to Robin and Robin to Dirichlet operators}
\label{sec:DRRD}

Let $\phi\in L^2(\partial\Omega)$ be given, and let $u$ be a solution with exponential 
decay at infinity to the exterior Robin problem
\[
\begin{cases}
Lu=0,& \text{in $\Omega$},\\
\partial_R u=\phi, & \text{on $\partial\Omega$}.
\end{cases}
\]
We will see below in Equation~\eqref{eq:RDDR} that the existence of 
$u$ is guaranteed in a subspace of finite codimension. This solution 
is unique, provided certain orthogonality conditions are imposed. 
We denote by $\RDO\phi$ the boundary values of $u$ at $\partial\Omega$, 
whenever $\phi$ admits a solution $u$.
The operator $\RDO:\phi\mapsto\RDO\phi$ is called the \emph{exterior Robin to
Dirichlet} operator for the differential equation $Lu=0$. We define the
\emph{interior Robin to Dirichlet operator} $\RDK$ in a similar way. Their
inverse operators, associating Robin data of solutions to their Dirichlet data
are called the \emph{exterior and interior Dirichlet to Robin} operators, and
are denoted $\DRO$ and $\DRK$ respectively. 

Using the relations in~\eqref{eq:boundaryrelations} we find that these operators 
in fact are independent on the choice of solution $u$ for $\phi$ outside a 
finite-dimensional subspace. It follows from~\eqref{eq:boundaryrelations} that:
\begin{equation}
\label{eq:RDDR}
\begin{aligned}
\Bigl(B+A\tau-\frac12\Bigr)\RDK &= A,\quad\mbox{on} \quad A^{-1}\Bigl(B+A\tau-\frac12\Bigr)C^\infty(\partial\Omega);\\
\Bigl(B+A\tau+\frac12\Bigr)\RDO &= A,\quad\mbox{on} \quad A^{-1}\Bigl(B+A\tau+\frac12\Bigr)C^\infty(\partial\Omega);\\
\DRK &= A^{-1}\Bigl(B+A\tau-\frac12\Bigr);\\
\DRO &= A^{-1}\Bigl(B+A\tau+\frac12\Bigr).
\end{aligned}
\end{equation}
These equations determine $\RDK$, $\RDO$, $\DRK$ and $\DRO$ outside a 
finite-dimensional subspace while $B+A\tau\pm \frac12$ are elliptic 
pseudo-differential operators of order $0$. After a choice of extension, 
Equation \eqref{eq:RDDR} allows us to consider the operators $\RDK$, 
$\RDO$, $\DRK$ and $\DRO$ as pseudo-differential operators on $\partial\Omega$.
More precisely, we have the following standard result for Dirichlet to Robin operators. 
The proof can be found in~\cite[Appendix C of Chapter $12$]{tay2}.

\begin{prop}
The interior and exterior Robin to Dirichlet and Dirichlet to Robin operators
are given by the relations in~\eqref{eq:RDDR}, and are elliptic 
pseudodifferential operators with constant principal symbols:
\[
\RDK,\RDO\in \Psi^{-1}(\partial\Omega)\quad\mbox{and}
\quad \DRK,\DRO\in \Psi^1(\partial\Omega).
\]
\end{prop}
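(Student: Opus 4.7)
The plan is to take the formulas in~\eqref{eq:RDDR} as the starting point; those relations have already been derived from Green's representation of exponentially decaying solutions of $Lu = 0$ combined with the jump relations~\eqref{eq:limitrelations} (essentially they are~\eqref{eq:boundaryrelations} with $A$ inverted on the left). Once those formulas are granted, the proposition reduces to a routine exercise in the classical pseudo-differential calculus on the smooth closed manifold $\partial \Omega$.

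The main step is to track orders and principal symbols. By Lemma~\ref{lem:pseudo-zero}, one has $A, B \in \Psi^{-1}(\partial \Omega)$, with $A$ elliptic of principal symbol $\sigma_{-1}(A)$ constant in the base variable $x$. By Lemma~\ref{lem:pseudo}, the map $A: L^2(\partial \Omega) \to H^1(\partial \Omega)$ is an isomorphism, so the standard parametrix construction for an invertible elliptic classical PsiDO on a closed manifold produces $A^{-1} \in \Psi^{+1}(\partial \Omega)$ with principal symbol $1/\sigma_{-1}(A)$, also constant in $x$, and agreeing with the true inverse modulo $\Psi^{-\infty}$. Since $\tau \in \Psi^0(\partial \Omega)$, the product $A \tau$ lies in $\Psi^{-1}$, so $B + A\tau \pm \tfrac{1}{2} \in \Psi^0(\partial \Omega)$ with principal symbol the nonzero constant $\pm \tfrac{1}{2}$. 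Composing with $A^{-1}$ then places $\DRK, \DRO \in \Psi^1(\partial \Omega)$, elliptic with constant principal symbols $\mp \tfrac{1}{2 \sigma_{-1}(A)}$ and $\pm \tfrac{1}{2 \sigma_{-1}(A)}$ respectively. Inverting these elliptic operators via the same parametrix procedure puts $\RDK, \RDO$ in $\Psi^{-1}(\partial \Omega)$ with the reciprocal constant principal symbols.

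The only delicate point is the standard but nontrivial fact that an invertible elliptic classical PsiDO on a closed smooth manifold has a classical pseudo-differential inverse, invoked here twice (once for $A$, once for $\DRK$ and $\DRO$). The smoothness of $\partial \Omega$ enters essentially at this step, which is precisely the technical core of the cited Taylor reference~\cite[Appendix C of Chapter~12]{tay2}. A minor additional wrinkle is that $\DRK, \DRO$ may fail to be strictly injective on a finite-dimensional subspace corresponding to Dirichlet eigenfunctions of the interior problem for $L_K$, so that $\RDK, \RDO$ are only literally defined up to finite rank; this ambiguity sits in $\Psi^{-\infty}$ and does not affect the principal-symbol analysis above.
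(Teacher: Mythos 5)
Your argument is correct and takes essentially the same route the paper has in mind: the paper defers this Proposition to Taylor's Appendix~C of Chapter~12, and what you have written out is precisely the symbol calculus underlying that reference, built from Lemmas~\ref{lem:pseudo-zero} and~\ref{lem:pseudo} together with the classical fact that an elliptic classical $\Psi$DO on a closed manifold has a classical parametrix, hence (when invertible) a classical inverse. The only small imprecision is in your closing remark — the finite-rank ambiguity in $\RDK,\RDO$ stems from the kernels of the order-zero operators $B+A\tau\pm\tfrac12$ (equivalently, interior/exterior \emph{Robin} solutions of $Lu=0$, not Dirichlet eigenfunctions) — but, as you correctly note, this sits in $\Psi^{-\infty}(\partial\Omega)$ and does not affect the order or principal-symbol conclusion.
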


Very oftenly, Equation \eqref{eq:RDDR} determines the Dirichlet to Robin operators; 
the operators $B+A\tau\pm \frac12$ are, in a sense made precise below, 
\emph{generically} invertible. This fact is based on the following lemma.

\begin{lemma}
\label{smallperturbation}
Assume that $M$ is a closed manifold and that $s>0$. Let $T\in \Psi^0(M)$ be 
an elliptic operator of index $0$ and $A\in \Psi^{-s}(M)$ be an invertible 
operator $A:L^2(M)\to H^s(M)$. Viewing $A^{-1}$ and $A^{-1}T$ as unbounded 
operators on $L^2(M)$, then for any $\epsilon\notin -\sigma(A^{-1}T)$, the 
bounded operator
\[
T+\epsilon A:L^2(M)\to L^2(M)
\]
is invertible.
\end{lemma}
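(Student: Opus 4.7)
My plan is to reduce the claim to the combination of a Fredholm-index argument and the spectral hypothesis, by exploiting the fact that $A$ is compact on $L^2(M)$.

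First I would observe that since $M$ is closed and $A\in\Psi^{-s}(M)$ with $s>0$, the operator $A:L^2(M)\to L^2(M)$ is compact: it factors through the compact embedding $H^s(M)\hookrightarrow L^2(M)$. Because $T\in\Psi^0(M)$ is elliptic of index zero, $T:L^2(M)\to L^2(M)$ is Fredholm with $\ind(T)=0$, and therefore the compact perturbation $T+\epsilon A:L^2(M)\to L^2(M)$ is Fredholm with $\ind(T+\epsilon A)=0$. Hence it suffices to prove injectivity, as an index-zero Fredholm operator is invertible as soon as it has trivial kernel.

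Next I would exploit the formal identity $T+\epsilon A=A\bigl(A^{-1}T+\epsilon\bigr)$ at the level of kernels. Suppose $f\in L^2(M)$ satisfies $(T+\epsilon A)f=0$. Then $Tf=-\epsilon Af$, and since $A:L^2(M)\to H^s(M)$ is bounded, we get $Tf\in H^s(M)$; in particular $f$ lies in the domain
\[
\mathrm{dom}(A^{-1}T)=\{g\in L^2(M):Tg\in H^s(M)\}\,,
\]
on which $A^{-1}T$ acts. Applying the bounded inverse $A^{-1}:H^s(M)\to L^2(M)$ to the relation $Tf=-\epsilon Af$ yields $A^{-1}Tf=-\epsilon f$, i.e. $(A^{-1}T+\epsilon)f=0$. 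The hypothesis $\epsilon\notin-\sigma(A^{-1}T)$ means $-\epsilon\notin\sigma(A^{-1}T)$, so $A^{-1}T+\epsilon$ is invertible as an unbounded operator on $L^2(M)$, forcing $f=0$. Combined with the Fredholm-of-index-zero property, this gives the invertibility of $T+\epsilon A$.

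The main subtlety (the only thing beyond standard manipulations) is the careful treatment of $A^{-1}T$ as an unbounded operator: one needs to check that the equation $(T+\epsilon A)f=0$ automatically places $f$ in $\mathrm{dom}(A^{-1}T)$, and one needs the correct interpretation of $\sigma(A^{-1}T)$ (namely, that $-\epsilon\notin\sigma(A^{-1}T)$ really means that $A^{-1}T+\epsilon$ has trivial kernel, which is the only consequence used). Everything else reduces to compactness of $A$ and the Fredholm stability of index $0$ under compact perturbations.
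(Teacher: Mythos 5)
Your proof is correct and follows essentially the same route as the paper: reduce to injectivity via the index-zero Fredholm property of $T+\epsilon A$, then pass from $\ker(T+\epsilon A)$ to $\ker(A^{-1}T+\epsilon)$ using invertibility of $A$, and finally invoke the spectral hypothesis. The only cosmetic difference is that you deduce the Fredholm property from compactness of $A$ rather than from ellipticity of $T+\epsilon A$, and you use only the direction of the spectral condition that is actually needed, which streamlines the final step slightly.
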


The statement of the Lemma makes sense because $T$ is a pseudo-differential 
operator of order zero and as such it preserves the domain $H^s(M)$ of $A^{-1}$.
We also note that by elliptic regularity, the set $-\sigma(A^{-1}T)$ is the 
same when changing 
the domain and range of $A^{-1}T$ to $H^t(M)$ for any $t$.

\begin{proof}
As $\ind(T)=0$ and $A$ is of negative order, the operator 
$T+\epsilon A$ is an elliptic pseudo-differential operator of order $0$. 
As such, it defines an operator $H^t(M)\to H^t(M)$ with index $0$ for 
any $\epsilon$ and $t$. Hence $T+\epsilon A$ is invertible if and only if 
$\ker(T+\epsilon A)=0$. However, $A$ is invertible so $\ker(T+\epsilon A)=0$ 
holds if and only if $\ker(A^{-1}T+\epsilon)=0$. By elliptic regularity these 
subspaces 
do not depend on the choice of domain in the Sobolev scale. The operator 
$A^{-1}T$ is an elliptic pseudo-differential operator of order $s$ with index 
$0$. It follows that $\ind(A^{-1}T+\epsilon)=0$. Thus 
$\ker(A^{-1}T+\epsilon)=0$ holds if and only if 
$-\epsilon\notin \sigma(A^{-1}T)$. 
\end{proof}

We say that $\tau$ is \emph{generic} if the operators 
\[
T_{\pm,\tau}:=B+A\tau\pm \frac12
\]
are invertible. Note the following well-known consequence of elliptic regularity and 
the Fredholm property of elliptic operators, an elliptic pseudo-differential operator 
$D$ of order $m$ is invertible as an operator between Sobolev spaces 
$D:H^s(\partial\Omega)\to H^{s-m}(\partial\Omega)$, for some $s\in \R$, 
if and only if $D$ is invertible inside the algebra of pseudo-differential operators.
The following Corollary motivates the terminology \emph{generic}.

\begin{cor}
\label{genericgamma}
Letting $\tau$, $A$ and $B$ be as above. For all $\epsilon\in [-1,1]$ outside a 
finite subset, $\tau+\epsilon$ is generic.
\end{cor}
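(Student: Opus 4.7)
The plan is to reduce the statement to Lemma~\ref{smallperturbation}. A direct computation shows
\[
T_{\pm,\tau+\epsilon}=B+A(\tau+\epsilon)\pm\tfrac12=T_{\pm,\tau}+\epsilon A,
\]
so the task becomes showing that $T_{\pm,\tau}+\epsilon A$ is invertible for all but finitely many $\epsilon\in[-1,1]$, for each of the two signs separately; taking the union of the two finite exceptional sets will conclude that $\tau+\epsilon$ is generic outside a finite subset of $[-1,1]$.

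To apply Lemma~\ref{smallperturbation} with $T=T_{\pm,\tau}$ and the same $A$, I first verify the hypotheses. By Lemma~\ref{lem:pseudo-zero}, both $B$ and $A$ lie in $\Psi^{-1}(\partial\Omega)$, so together with $\tau\in\Psi^0(\partial\Omega)$ one gets $B+A\tau\in\Psi^{-1}(\partial\Omega)$. Hence $T_{\pm,\tau}=\pm\tfrac12 I+(B+A\tau)$ is elliptic of order $0$ with constant principal symbol $\pm\tfrac12$, and as a compact perturbation of $\pm\tfrac12 I$ has index $0$. Lemma~\ref{lem:pseudo} provides that $A\in\Psi^{-1}(\partial\Omega)$ is an isomorphism $L^2(\partial\Omega)\to H^1(\partial\Omega)$, matching the hypotheses of Lemma~\ref{smallperturbation} with $s=1$. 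That lemma then delivers invertibility of $T_{\pm,\tau}+\epsilon A$ for every $\epsilon\notin-\sigma(A^{-1}T_{\pm,\tau})$.

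What remains is to show $\sigma(A^{-1}T_{\pm,\tau})\cap[-1,1]$ is finite. The composition $A^{-1}T_{\pm,\tau}$ lies in $\Psi^1(\partial\Omega)$ and splits as
\[
A^{-1}T_{\pm,\tau}=\pm\tfrac12 A^{-1}+(A^{-1}B+\tau),
\]
where $A^{-1}B+\tau\in\Psi^0(\partial\Omega)$ is bounded on $L^2(\partial\Omega)$. By Lemma~\ref{lem:pseudo-zero}(1) and the invertibility of $A$, the operator $\pm\tfrac12 A^{-1}$ is self-adjoint and unbounded on $L^2(\partial\Omega)$ with domain $H^1(\partial\Omega)$, and has compact resolvent thanks to the compact embedding $H^1(\partial\Omega)\hookrightarrow L^2(\partial\Omega)$. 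Setting $K=A^{-1}B+\tau$, the standard estimate $\|K(\pm\tfrac12 A^{-1}-it)^{-1}\|\le\|K\|/|t|$ for real $t$ shows that $it$ lies in the resolvent set of $A^{-1}T_{\pm,\tau}$ for $|t|$ large, with resolvent compact as a product of a compact and a bounded operator. Therefore $A^{-1}T_{\pm,\tau}$ has discrete spectrum on $L^2(\partial\Omega)$, and its intersection with $[-1,1]$ is finite, as desired. The main subtlety in the argument is precisely this discreteness statement, since $A^{-1}T_{\pm,\tau}$ is not manifestly self-adjoint (the double-layer operator $B$ is not self-adjoint in general); the resolution is to isolate the self-adjoint leading piece $\pm\tfrac12 A^{-1}$ and treat the remainder as a bounded perturbation of a self-adjoint operator with compact resolvent.
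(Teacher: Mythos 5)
Your proof is correct, but for the crucial step --- discreteness of $\sigma(A^{-1}T_{\pm,\tau})$ --- you take a genuinely different route from the paper. The paper invokes Shubin's Theorem I.8.4 (an elliptic pseudo-differential operator on a closed manifold has discrete spectrum as soon as its resolvent set is nonempty) and then establishes nonemptiness of the resolvent set via the G\aa rding inequality applied to $\pm A^{-1}T_{\pm,\tau}$, whose principal symbols are positive constants. You instead make the decomposition $A^{-1}T_{\pm,\tau}=\pm\tfrac12 A^{-1}+(A^{-1}B+\tau)$ explicit, observe that the leading part is self-adjoint with compact resolvent (using the self-adjointness and invertibility of $A$ from Lemma~\ref{lem:pseudo-zero} and Lemma~\ref{lem:pseudo}, plus the compact embedding $H^1\hookrightarrow L^2$), and then run a Neumann-series argument along the imaginary axis using $\|K(\pm\tfrac12 A^{-1}-it)^{-1}\|\le \|K\|/|t|$ to get compactness of the resolvent of the full operator, hence discrete spectrum. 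Both arguments are sound; yours is more hands-on and avoids citing the abstract discreteness theorem for elliptic operators, but it pays for this by needing the self-adjointness and positivity of $A$ in an essential way (to identify the unperturbed part as self-adjoint with the correct resolvent bound), whereas the paper's route via G\aa rding only needs the sign of the principal symbol. One very minor remark: when you write that the pieces "split as" $\pm\tfrac12 A^{-1}+(A^{-1}B+\tau)$, it is worth noting in passing that the domain $H^1(\partial\Omega)$ is exactly the natural domain of $A^{-1}T_{\pm,\tau}$, so the decomposition holds as an identity of unbounded operators; you implicitly use this when comparing resolvent sets.
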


\begin{proof}
By \cite[Theorem I.$8.4$]{shubin}, if the spectrum 
$\sigma(A^{-1}T_{\pm,\tau})$ 
is not equal to $\mathbb{C}$, it is a discrete subset of $\mathbb{C}$ and 
$[-1,1]\cap \sigma(-A^{-1}T_{+,\tau})\cap \sigma(-A^{-1}T_{-,\tau})$ is a 
finite set. 
The Corollary follows from Lemma~\ref{smallperturbation} provided there exists 
$\lambda_\pm\in \mathbb{C}$ such that $A^{-1}T_{\pm,\tau}+\lambda_\pm$ are 
invertible. 
We note that the principal symbols
$\pm \sigma_1(A^{-1}T_{\pm,\tau})$ are positive 
constant functions on $S^*\partial \Omega$. Existence of 
$\lambda_\pm\in \mathbb{C}$ follows 
from the G\aa rding inequality (\cite[Theorem $18.1.14$]{horIII}).  
\end{proof}

We turn our attention to associating Robin data on $\partial\Omega$ to
Dirichlet data for functions in the Landau subspace $\mathcal{L}_q$, or 
more generally to solutions of the homogeneous equation 
$(L-\Lambda_q)u=0$ in $K^\circ$.
The construction is well known and can be found in, for 
instance,~\cite[Chapter~7.12 and Appendix~C of Chapter~12]{tay2}.

Let $\mathcal{Q}_q^K\subseteq C^\infty(\overline{K})$ denote the space of 
solutions $u\in C^\infty(\overline{K})$ to $(L-\Lambda_q)u=0$ in $K^\circ$ and 
let $\mathcal{Q}_q^{\partial\Omega}\subseteq C^\infty(\partial\Omega)$ be 
the image 
of $\mathcal{Q}_q^K$ under the restriction mapping $C^\infty(\overline{K})\to 
C^\infty(\partial\Omega)$. Since $L-\Lambda_q$ is a strongly elliptic operator 
in $K$, the space 
$\mathcal{Q}_q^{\partial\Omega}\subseteq C^\infty(\partial\Omega)$
has finite codimension. Since the eigenvalue multiplicities of $L$ in $K$ 
equipped with Dirichlet conditions on $\partial\Omega$ are finite, the kernel of 
$\mathcal{Q}_q^K\to \mathcal{Q}_q^{\partial\Omega}$ is finite-dimensional.

This means that we can, for any function $\phi\in \mathcal{Q}_q^{\partial\Omega}$, solve
\[
\begin{cases}
(L-\Lambda_q)u = 0,&\text{in $K^\circ$,}\\
u = \phi,& \text{on $\partial\Omega$.}
\end{cases}
\]
The condition that $u$ is $L^2$-orthogonal to 
$\ker\bigl(\mathcal{Q}_q^K\to \mathcal{Q}_q^{\partial\Omega}\bigr)$ 
guarantees a unique solution. Define the corresponding solution operator 
\[
^0\DRq:\mathcal{Q}_q^{\partial\Omega}\to C^\infty(\partial\Omega),
\quad ^0\DRq\phi:=\partial_{R,K}u.
\]
We let $\DRq:C^\infty(\partial\Omega)\to C^\infty(\partial\Omega)$ denote 
any extension of this operator, which exists since 
$\mathcal{Q}_q^{\partial\Omega}\subseteq C^\infty(\partial\Omega)$
has finite codimension. The following result is standard and follows 
from~\cite[Chapter $7$]{tay2}.

\begin{lemma}
\label{anelliptic}
The operator $\DRq$ possesses the following properties:
\begin{enumerate}
\item $\DRq$ is an elliptic pseudo-differential operator of order $1$ with 
constant positive principal symbol.
\item For any $s$, $\ind\bigl(\DRq:H^s(\partial\Omega)
\to H^{s-1}(\partial\Omega)\bigr)=0$. 
\item There exists a number $c=c(\partial\Omega,q)\in \R$ such that 
$\DRq:H^{1/2}(\partial\Omega)\to H^{-1/2}(\partial\Omega)$ is 
invertible as an operator on $L^2(\partial\Omega)$ as long as $\tau\geq c$.
\end{enumerate}
\end{lemma}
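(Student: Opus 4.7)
The plan is to establish the three properties in sequence, relying on the pseudo-differential calculus of boundary value problems as in~\cite[Chapter~7 and Appendix~C of Chapter~12]{tay2}; since the magnetic vector potential contributes only lower-order terms to $L$, the standard constructions for strongly elliptic operators adapt directly. Throughout, I keep in mind that $\DRq$ is defined only up to a finite-rank (hence smoothing) correction on the finite-codimensional subspace $\mathcal{Q}_q^{\partial\Omega}$, so all three properties can be read off from $^0\DRq$.

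For (1), on $\mathcal{Q}_q^{\partial\Omega}$ the operator $^0\DRq$ sends $\phi$ to $\partial_{R,K}u$, where $u$ solves $(L-\Lambda_q)u=0$ with $\gamma_{0,K}u=\phi$ and a finite-dimensional orthogonality condition. The Dirichlet to Neumann part $\phi\mapsto\partial_{N,K}u$ is a classical pseudo-differential operator of order one whose principal symbol is determined locally by the principal symbol of $L$---which agrees with that of the Euclidean Laplacian---and equals $|\xi'|$ in local coordinates on $\partial\Omega$. Adding $\tau\in\Psi^0(\partial\Omega)$ leaves the principal symbol untouched, and the finite-rank extension to $C^\infty(\partial\Omega)$ is smoothing, so $\DRq\in\Psi^1(\partial\Omega)$ with principal symbol a positive constant multiple of $|\xi'|$.

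For (2), two elliptic pseudo-differential operators of the same order with the same principal symbol differ by an operator of strictly lower order, and such an operator is compact as a map $H^s(\partial\Omega)\to H^{s-1}(\partial\Omega)$ by Rellich's theorem on the closed manifold $\partial\Omega$. Comparing $\DRq$ with $c\,(1-\Delta_{\partial\Omega})^{1/2}$ (for the constant $c>0$ from (1)), which is self-adjoint, positive, and hence invertible between the relevant Sobolev spaces, the compact-perturbation invariance of the Fredholm index gives $\ind(\DRq)=0$ on every Sobolev scale.

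For (3), the key identity is
\[
\DRq^\tau = \DRq^0 + \tau,
\]
which follows immediately from $\partial_{R,K}=\partial_{N,K}+\tau\gamma_{0,K}$ together with the observation that the Dirichlet problem $(L-\Lambda_q)u=0$, $\gamma_{0,K}u=\phi$ does not involve $\tau$. Applying G\aa rding's inequality (\cite[Theorem~18.1.14]{horIII}) to $\DRq^0$ produces $c_1>0$ and $c_2\in\R$ with
\[
\Real\bigl\langle \DRq^0\phi,\phi\bigr\rangle_{L^2(\partial\Omega)}
\geq c_1\|\phi\|_{H^{1/2}(\partial\Omega)}^2 - c_2\|\phi\|_{L^2(\partial\Omega)}^2.
\]
Taking $c:=c_2$ and assuming $\tau\geq c$ as self-adjoint operators yields $\Real\langle\DRq^\tau\phi,\phi\rangle\geq c_1\|\phi\|_{H^{1/2}}^2$, forcing injectivity of $\DRq^\tau$; combined with the index-zero Fredholm property from (2), this gives the invertibility $H^{1/2}(\partial\Omega)\to H^{-1/2}(\partial\Omega)$. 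The most delicate bookkeeping is to confirm that the finite-dimensional extension chosen to define $\DRq$ on all of $C^\infty(\partial\Omega)$---necessary whenever $\Lambda_q$ lies in the Dirichlet spectrum of $L$ on $K^\circ$---affects none of these three properties, which reduces to the finite-codimensionality of $\mathcal{Q}_q^{\partial\Omega}$ and the fact that the correction is finite rank, hence smoothing and compact.
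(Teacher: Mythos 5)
Your proposal is correct and follows essentially the same route as the paper: citing Taylor's Proposition C.1 for the principal symbol, deducing index zero from the constant principal symbol by comparison with an invertible elliptic operator of the same order, and then using G\aa rding's inequality plus positivity of $\tau$ (equivalently, the decomposition $\DRq^\tau = \DRq^0 + \tau$, which the paper uses implicitly when passing from $\tau$ to $\tau - c$) to obtain injectivity and hence invertibility. Your presentation of step (3) is slightly cleaner, but the underlying argument is the same as the paper's.
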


\begin{proof}
The computation of the principal symbol of $\DRq$ can be found in 
Proposition~C.1 of~\cite[Chapter $12$]{tay2}. The identity $\ind\bigl(\DRq:H^s(\partial\Omega)
\to H^{s-1}(\partial\Omega)\bigr)=0$ follows in the same way as in the proof of 
Lemma \ref{lem:pseudo-zero}. From the index computation for $\DRq$ we conclude that 
$\DRq$ is an isomorphism if and only if 
$\DRq$ is injective. It follows from the G\aa rding 
inequality that there exists a positive constant $c$
such that for $\tau\geq 0$, for a certain constant $c'_\tau>0$,
\[
\Real\bigl\langle  \DRq u,u\bigr\rangle
\geq c'_\tau\|u\|_{H^{1/2}(\partial\Omega)}^2
-c\|u\|_{L^2(\partial\Omega)}^2.
\]
Hence, as long as $\tau\geq c$, we have
$\Real\langle  \DRq u,u\rangle
\geq c'_{\tau-c}\|u\|_{H^{1/2}(\partial\Omega)}^2$. 
Injectivity of $\DRq:H^{1/2}(\partial\Omega)\to H^{-1/2}(\partial\Omega)$ assuming $\tau\geq c$ follows.
\end{proof}

\begin{remark}
\label{DvNvsnormal}
We note that by construction, $\DRq$ coincides with $\partial_{R,K}$ 
outside a finite-dimensional subspace. Hence, for $q\in\Nzero$, a finite rank 
smoothing operator $S_q\in \Psi^{-\infty}(\partial\Omega)$ exists,
such that, as long as $u\in C^\infty(\overline{K})$ satisfies $(L-\Lambda_q) u=0$ in $K^\circ$, then
\[
\partial_{R,K} u
=(\DRq+S_q)\gamma_{0,K}u.
\]
\end{remark}

\subsection{Reduction to a Toeplitz operator}
\label{subsec:reduction}

In this subsection we prove Theorem~\ref{thm-GKPtwo} modulo a technical 
Lemma that we prove in the next section. We assume as above that 
$K\subset \R^{2d}$ is compact with smooth boundary upon which $\tau$ is a classical
pseudo-differential operator which we for simplicity assume to have order $0$.
In the previous subsections, we made the assumption that $\tau$ was self-adjoint to 
simplify proofs while in this subsection it is necessary for the results to hold. 
Let $q\in\Nzero$ and pick $\delta>0$ such that
\[
\Big(\bigl(\Lambda_q^{-1}-2\delta,\Lambda_q^{-1}+2\delta\bigr)
\setminus\bigl\{\Lambda_q^{-1}\bigr\}\Big)\cap
\sigma_{\text{ess}}(\widetilde{L}^{-1})=\varnothing. 
\]
Denote by
$\bigl\{r_j^{(q)}\bigr\}_{j\geq 1}$ the decreasing sequence of eigenvalues of
$\widetilde{L}^{-1}$ in the interval
$(\Lambda^{-1}_q,\Lambda_q^{-1}+\delta)$. 
For each $q\in\Nzero$ we introduce the operator 
\begin{equation}
\label{eq-Tn}
T_q=P_qVP_q\,,
\end{equation} 
where, as before, $P_q$ is the orthogonal projection onto the eigenspace 
$\mathcal L_q$ associated with $\Lambda_q$ and $V=\widetilde L^{-1}-L^{-1}$. 
By Lemma~\ref{lem:V}, $V$ is a 
positive and compact operator. These properties are inherited by $T_q$. 
Denote by $\{t_j^{(q)}\}$ the decreasing sequence of eigenvalues of $T_q$.
The next lemma, proved in \cite[Proposition~2.2]{puro}, shows that
$r_j^{(q)}-\Lambda_q^{-1}$ are close to the eigenvalues of $T_q$.

\begin{lemma}[{\cite[Proposition~2.2]{puro}}]
\label{lem:RT}
Given $\epsilon>0$ there exist integers $l$ and $j_0$ such that
\begin{equation*}
 (1-\epsilon)t_{j+l}^{(q)}\leq r_{j}^{(q)}-\Lambda_q^{-1} 
\leq (1+\epsilon)t_{j-l}^{(q)}, \quad\forall~j\geq j_0.
\end{equation*}
\end{lemma}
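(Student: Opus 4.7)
The plan is to use the min--max principle combined with a block decomposition with respect to the Landau projection $P_q$ and its complement. Set $H:=\widetilde{L}^{-1}-\Lambda_q^{-1}=M+V$, with $M:=L^{-1}-\Lambda_q^{-1}$. By the choice of $\delta$, the spectrum of $H$ in $(0,2\delta)$ is purely discrete and consists exactly of $\{r_j^{(q)}-\Lambda_q^{-1}\}_{j\geq 1}$. The key structural observation is that $M$ annihilates $\mathrm{Ran}(P_q)$, while on $\mathrm{Ran}(P_q^\perp)$ its spectrum is bounded away from zero by the constant $c_q:=\min(\Lambda_{q-1}^{-1}-\Lambda_q^{-1},\Lambda_q^{-1}-\Lambda_{q+1}^{-1})>0$ determined by the Landau gaps. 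This gap justifies comparing the positive discrete eigenvalues of $H$ to those of the compression $T_q=P_qVP_q$.

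Decomposing $f=P_qf+P_q^\perp f$ one has
\[
\langle Hf,f\rangle=\langle T_q P_qf,P_qf\rangle+\langle MP_q^\perp f,P_q^\perp f\rangle+\langle P_q^\perp VP_q^\perp f,P_q^\perp f\rangle+2\Real\langle VP_qf,P_q^\perp f\rangle.
\]
Since $V\geq 0$, I would bound the cross term by the Cauchy--Schwarz-type inequality
\[
2|\Real\langle VP_qf,P_q^\perp f\rangle|\leq \epsilon\langle VP_qf,P_qf\rangle+\epsilon^{-1}\langle VP_q^\perp f,P_q^\perp f\rangle.
\]
Substituting this in yields two-sided quadratic form inequalities $H_-\leq H\leq H_+$, where each $H_\pm$ splits as a direct sum of $(1\pm\epsilon)T_q$ on $\mathrm{Ran}(P_q)$ and an operator of the form $M+C_\epsilon\, P_q^\perp VP_q^\perp$ on $\mathrm{Ran}(P_q^\perp)$, for some constant $C_\epsilon$. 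On $\mathrm{Ran}(P_q^\perp)$ this latter operator is a compact perturbation of $M|_{P_q^\perp}$, whose spectrum has a gap of size $c_q$ around $0$. By Weyl's theorem its spectrum in $(0,\delta)$ is therefore discrete and, after shrinking $\delta$, only finitely many eigenvalues, say $l=l(\epsilon)$ of them, fall into $(0,\delta)$.

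Applying the min--max principle to $H_\pm$, the $j$-th positive eigenvalue of $H_+$ is at most the $(j-l)$-th eigenvalue of $(1+\epsilon)T_q$, giving the upper bound $r_j^{(q)}-\Lambda_q^{-1}\leq (1+\epsilon)t_{j-l}^{(q)}$; the symmetric argument applied to $H_-$ yields the lower bound $r_j^{(q)}-\Lambda_q^{-1}\geq (1-\epsilon)t_{j+l}^{(q)}$. The main obstacle is the bookkeeping in the min--max step: one must verify that the shift $l$ really depends only on $\epsilon$ (and $q$) and not on $j$, so that the finite contribution of the $P_q^\perp$-block amounts to a uniform index shift rather than a multiplicative distortion. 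This uses precisely the spectral gap of $M$ on $\mathrm{Ran}(P_q^\perp)$ together with the compactness of $V$ furnished by Lemma~\ref{lem:V}.
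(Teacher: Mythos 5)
The paper does not prove this lemma; it cites it as \cite[Proposition~2.2]{puro} and relies on that reference. So there is no in-paper proof to compare against, and your proposal has to be assessed on its own merits as a reconstruction of the Pushnitski--Rozenblum argument.

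Your overall strategy is the right one and follows the spirit of \cite{puro}: decompose $H=\widetilde{L}^{-1}-\Lambda_q^{-1}=M+V$ with $M=L^{-1}-\Lambda_q^{-1}$, block with respect to $P_q/P_q^\perp$, absorb the off-diagonal coupling by the Cauchy--Schwarz estimate $2|\Real\langle VP_qf,P_q^\perp f\rangle|\le\epsilon\langle VP_qf,P_qf\rangle+\epsilon^{-1}\langle VP_q^\perp f,P_q^\perp f\rangle$, and pass to the block-diagonal $H_\pm$. The algebra there is correct; note only that the quantity you write as $c_q=\min(\Lambda_{q-1}^{-1}-\Lambda_q^{-1},\,\Lambda_q^{-1}-\Lambda_{q+1}^{-1})$ makes sense only for $q\geq 2$ — for $q=1$ one should take $c_1=\Lambda_1^{-1}-\Lambda_2^{-1}$, since the paper's convention $\Lambda_0=-\infty$ would give $\Lambda_0^{-1}=0$ and a negative entry in the min.

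The step you describe as ``bookkeeping'' is in fact the crux and is not yet a proof. For $q\geq 2$ the eigenvalues $r_j^{(q)}-\Lambda_q^{-1}$ lie strictly inside the spectral gap $(0,\Lambda_{q-1}^{-1}-\Lambda_q^{-1})$ of $\sigma_{\text{ess}}(H)$, with $0$ sitting at the bottom of that gap and belonging to the essential spectrum. The standard variational min--max principle characterises eigenvalues below (or above) the essential spectrum, not eigenvalues inside a gap, so the implication ``$H\le H_+$ block-diagonal $\Rightarrow$ the $j$-th eigenvalue of $H$ in $(0,\delta)$ is $\le$ the $j$-th eigenvalue of $H_+$'' does not follow from the usual min--max. (For $q=1$ it does, because there $M\leq 0$, so $\widetilde L^{-1}$'s eigenvalues near $\Lambda_1^{-1}$ sit above the whole essential spectrum and the ordinary max--min applies.) Form inequalities between self-adjoint operators do not, in general, control the ordering or the count of their discrete spectra inside a common gap: under the interpolation $A+t(B-A)$ with $B-A\geq 0$ compact, eigenvalues move up continuously, but they may emerge from the essential spectrum at the bottom endpoint of the gap or leave through its top, so the counting function in $(\lambda,\delta)$ is not monotone in $t$. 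Making the ``uniform shift $l$'' precise requires either the Birman--Schwinger principle (reducing $\mu\in\sigma(M+V)$ to $1\in\sigma\bigl(V^{1/2}(\mu-M)^{-1}V^{1/2}\bigr)$, where the singular $\mu^{-1}V^{1/2}P_qV^{1/2}$ term carries the Toeplitz asymptotics and the $P_q^\perp$ contribution is uniformly bounded), or a careful eigenvalue-crossing argument tracking how many eigenvalues can exit through $\delta$ along the interpolation. You state the right heuristic — the spectral gap of $M|_{P_q^\perp}$ together with compactness of $V$ bounds the extraneous contribution — but you must supply the actual gap-comparison lemma; as written, this remains a genuine gap in the argument.
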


The spectrum of $T_q$ will be related further to the spectrums of Toeplitz 
operators for generic operators $\tau$. Recall that
given a compact domain $U\subset\R^{2d}$, we introduced 
in~\eqref{eq-toep} the Toeplitz operator $S_q^U$. We will prove now 
the following result.

\begin{lemma}
\label{lem:toeplitztva}
For all $q\in\Nzero$ there exists a finite-dimensional subspace 
$\mathcal{W}_q\subseteq \mathcal{L}_q$ such that if 
$K_0\subset K \subset K_1$ are 
compact domains with ${\partial K_i\cap \partial K=\varnothing}$ (for $i=0$ 
and $i=1$) there exists a constant $C>1$ such
that
\begin{equation}
\label{eq:inequality}
 \frac1C \langle f,S_q^{K_0}f\rangle_{L^2(\R^{2d})} 
\leq \langle f, T_q f \rangle_{L^2(\R^{2d})} 
\leq C \langle
 f,S_q^{K_1}f\rangle_{L^2(\R^{2d})}\quad\forall~f\in \mathcal{L}_q\ominus\mathcal{W}_q\,.
\end{equation}
\end{lemma}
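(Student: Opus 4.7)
The goal is to rewrite $\langle f,T_qf\rangle=\langle f,Vf\rangle$, for $f\in\mathcal{L}_q$, as a quadratic form of an order-$1$ pseudo-differential operator on $\partial \Omega$, identify that form with $\|f|_{\partial\Omega}\|_{H^{1/2}(\partial\Omega)}^2$ modulo a finite-dimensional subspace, and then compare the latter with the $L^2$-norms on $K_0$ and $K_1$ by interior elliptic and trace estimates.

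For $f\in \mathcal{L}_q$ we have $L^{-1}f=\Lambda_q^{-1}f$, so $w:=Vf=\widetilde{L}^{-1}f-\Lambda_q^{-1}f$ solves $Lw=0$ in $\Omega\dot{\cup}K^\circ$ with identical Robin data $\partial_R w_\Omega=\partial_R w_K=-\Lambda_q^{-1}\partial_R f$ on $\partial\Omega$. Applying the Robin-to-Dirichlet operators $\RDO$ and $\RDK$ from Subsection~\ref{sec:DRRD}, the boundary jump becomes $(v_\Omega-v_K)|_{\partial \Omega}=-\Lambda_q^{-1}(\RDO-\RDK)\partial_R f$, and Lemma~\ref{lem:V} yields
\[
\langle f,T_qf\rangle=\Lambda_q^{-2}\bigl\langle \partial_R f,(\RDK-\RDO)\partial_R f\bigr\rangle_{L^2(\partial \Omega)}.
\]
Since $(L-\Lambda_q)f=0$ in $K^\circ$, Remark~\ref{DvNvsnormal} gives $\partial_R f=(\DRq+S_q)\psi$ with $\psi:=\gamma_0 f$, and hence
\[
\langle f,T_qf\rangle=\bigl\langle \psi,\Theta_q\psi\bigr\rangle_{L^2(\partial \Omega)},\quad \Theta_q:=\Lambda_q^{-2}(\DRq+S_q)^*(\RDK-\RDO)(\DRq+S_q),
\]
which is a self-adjoint classical pseudo-differential operator on $\partial \Omega$ of order $1+(-1)+1=1$.

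The deferred technical lemma from Section~\ref{sec:jobbigtlemma} establishes that, modulo a finite-dimensional subspace, $\langle \psi,\Theta_q\psi\rangle\asymp \|\psi\|_{H^{1/2}(\partial \Omega)}^2$. For the upper bound against $K_1$, let $U\Subset K_1^\circ$ be a relatively compact open collar of $\partial \Omega$; since $(L-\Lambda_q)f=0$ on a neighborhood of $\overline{U}$, interior elliptic regularity and the trace theorem give $\|\psi\|_{H^{1/2}(\partial \Omega)}\leq C\|f\|_{H^1(U)}\leq C'\|f\|_{L^2(K_1)}$ for every $f\in \mathcal{L}_q$. For the lower bound against $K_0$, observe that $f|_K$ solves the Dirichlet boundary value problem $(L-\Lambda_q)u=0$ in $K^\circ$, $u=\psi$ on $\partial \Omega$; the finite-dimensional Dirichlet kernel $\mathcal{D}_q$ at level $\Lambda_q$ pulls back via $f\mapsto f|_K$ to a finite-dimensional subspace $\mathcal{W}_q\subset \mathcal{L}_q$ (finite-dimensional because restriction to $K$ is injective on $\mathcal{L}_q$ by Aronszajn unique continuation for real-analytic Landau eigenfunctions); on the orthogonal complement of $\mathcal{W}_q$ the Fredholm solution operator to the Dirichlet problem is bounded, giving $\|f\|_{L^2(K_0)}\leq \|f\|_{L^2(K)}\leq C\|\psi\|_{H^{1/2}(\partial \Omega)}$. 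After enlarging $\mathcal{W}_q$ to also absorb the exceptional subspace from the technical lemma, the three estimates combine to give~\eqref{eq:inequality}.

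I expect the main obstacle to be the $H^{1/2}$-equivalence for $\Theta_q$: a priori its order is only at most $1$, and one must verify that the principal symbol is a strictly positive scalar multiple of $|\xi|$ so that the G\aa rding inequality upgrades to $H^{1/2}$-coercivity modulo a finite-dimensional subspace. This is precisely the content of the deferred lemma and relies on the principal-symbol formulas for $\DRq$, $\RDK$ and $\RDO$ recorded in Subsection~\ref{sec:DRRD}. A secondary subtlety is the possibility that $\Lambda_q$ lies in the Dirichlet spectrum of $L$ on $K^\circ$, which is the origin of the finite-dimensional correction $\mathcal{W}_q$.
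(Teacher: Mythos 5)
Your argument follows the paper's proof almost exactly; the only surface-level difference is a cosmetic one in how the boundary pseudo-differential operator is packaged. You work with the Robin-to-Dirichlet operators directly and set
\[
\Theta_q=\Lambda_q^{-2}(\DRq+S_q)^*(\RDK-\RDO)(\DRq+S_q),
\]
whereas the paper's Lemma~\ref{lem:Tn} uses the parametrices $R_{\pm,\tau}$ of $T_{\pm,\tau}=B+A\tau\pm\frac12$ and writes
$T_{\tau,q}=(\DRq+S_q)^*(R_{-,\tau}-R_{+,\tau})A(\DRq+S_q)$.
Since $\RDK-\RDO=(R_{-,\tau}-R_{+,\tau})A$ modulo a finite-rank smoothing operator, your $\Theta_q$ and the paper's $\Lambda_q^{-2}T_{\tau,q}$ coincide up to a finite-rank correction, which is harmless because the statement only asks for an inequality on a finite-codimension subspace. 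The paper's route of tracking the explicit finite-rank remainders $\hat F_{\pm,\tau}$ coming from $R_{\pm,\tau}T_{\pm,\tau}-1$ has the small advantage that the parametrices are everywhere-defined pseudo-differential operators (no auxiliary extension of $\RDK,\RDO$ is needed), so the bookkeeping of the exceptional space $\mathcal W_q$ becomes a list of explicit finite-rank operators
$\mathcal L_q\ominus\mathcal W_q=\ker(S_{\tau,q}\gamma_0 P_q)\cap\ker F_q\cap\ker F_K P_q\cap\mathcal L_q$.
Everything else lines up: your step that $\langle\psi,\Theta_q\psi\rangle\asymp\|\psi\|_{H^{1/2}(\partial\Omega)}^2$ modulo a finite-dimensional correction is exactly the paper's Lemma~\ref{controllingnonpositive} (the G\aa rding inequality for the positive-principal-symbol order-$1$ operator, after projecting off the finitely many non-positive eigenvalues of its self-adjoint part); your upper bound via interior elliptic regularity and the trace theorem is Step~2 of the paper's proof; and your lower bound via the Fredholm Dirichlet solution operator for $L-\Lambda_q$ on $K$ is Proposition~\ref{fredholmofl} and Remark~\ref{splittingremark}. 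The two subtleties you flag at the end (positivity of the principal symbol, and the Dirichlet spectrum of $L$ on $K$) are exactly the ones the paper addresses, so nothing essential is missing.
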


The proof of Lemma~\ref{lem:toeplitztva} is by reduction of the operator 
$T_q$ to a pseudo-differential operator on the common boundary 
$\partial\Omega$ of $\Omega$ and $K$. We postpone the proof to 
Section~\ref{sec:jobbigtlemma} below, and continue instead with the proof 
of~\eqref{eq-thm-2}.

\begin{cor}
\label{cor:thm-resolvent}
Whenever $K\subset \R^2$ is compact with $C^\infty$-boundary,
\[\lim_{j\to{+\infty}}\Bigl(j!\bigl(r_j^{(q)}-\Lambda_q^{-1}\bigr)\Bigr)^{1/j}
=\frac{b}2\bigl(\kap(K)\bigr)^2\,.\]
In particular,~\eqref{eq-thm-2} holds true.
\end{cor}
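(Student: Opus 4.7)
The plan is to chain the following ingredients, all of which are in place in the excerpt.

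First, by Lemma~\ref{lem:RT}, it suffices to analyze $t_j^{(q)}$, the decreasing eigenvalues of $T_q=P_qVP_q$, rather than $r_j^{(q)}-\Lambda_q^{-1}$ directly. Indeed, from the two-sided bound $(1-\epsilon)t_{j+l}^{(q)}\le r_j^{(q)}-\Lambda_q^{-1}\le (1+\epsilon)t_{j-l}^{(q)}$, a finite shift of the index and a multiplicative constant $(1\pm\epsilon)^{1/j}\to 1$ do not affect the limit $\lim_j(j!\,a_j)^{1/j}$ provided the sequence $a_j$ is positive and decays super-geometrically. So establishing
\[
\lim_{j\to\infty}\bigl(j!\, t_j^{(q)}\bigr)^{1/j}=\frac{b}{2}\bigl(\kap(K)\bigr)^2
\]
will give the corresponding statement for $r_j^{(q)}-\Lambda_q^{-1}$.

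Next, I would sandwich $T_q$ between Toeplitz operators. By Lemma~\ref{lem:toeplitztva}, for any compact $K_0\subset K\subset K_1$ with $\partial K_i\cap \partial K=\varnothing$ there is a finite-dimensional subspace $\mathcal{W}_q\subset \mathcal{L}_q$ and a constant $C>1$ with
\[
C^{-1}\langle f,S_q^{K_0}f\rangle\le \langle f,T_qf\rangle\le C\langle f,S_q^{K_1}f\rangle\qquad\forall\,f\in\mathcal{L}_q\ominus\mathcal{W}_q.
\]
Applying the Ky Fan / min-max characterization of eigenvalues, and absorbing the finite-dimensional defect into an index shift (which as above is invisible to $(j!\,a_j)^{1/j}$), I obtain
\[
C^{-1}s_{j+l'}^{(q),K_0}\le t_j^{(q)}\le C\,s_{j-l'}^{(q),K_1}
\]
for some fixed $l'$ and all sufficiently large $j$, where $s_j^{(q),U}$ denotes the decreasing eigenvalue sequence of $S_q^U$.

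Now Theorem~\ref{lem:toepspectrum} applies to the Lipschitz domains $K_0$ and $K_1$, giving
\[
\lim_{j\to\infty}\bigl(j!\,s_j^{(q),K_i}\bigr)^{1/j}=\frac{b}{2}\bigl(\kap(K_i)\bigr)^2,\qquad i=0,1.
\]
The sandwich, together with $C^{1/j}\to 1$, yields
\[
\frac{b}{2}\bigl(\kap(K_0)\bigr)^2\le \liminf_{j\to\infty}(j!\,t_j^{(q)})^{1/j}\le \limsup_{j\to\infty}(j!\,t_j^{(q)})^{1/j}\le \frac{b}{2}\bigl(\kap(K_1)\bigr)^2.
\]
Letting $K_0\nearrow K$ and $K_1\searrow K$ through smooth approximations and using the standard continuity of logarithmic capacity under monotone Hausdorff approximation of a $C^\infty$ planar compactum (which I would quote from standard potential theory) squeezes the upper and lower bounds to $\tfrac{b}{2}(\kap(K))^2$. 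This proves the displayed formula for $t_j^{(q)}$ and hence, by the first paragraph, for $r_j^{(q)}-\Lambda_q^{-1}$.

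Finally I transfer the statement from $\widetilde L^{-1}$ to $L_\Omega$ to obtain~\eqref{eq-thm-2}. Since $\widetilde L=L_\Omega\oplus L_K$ and $L_K$ has compact resolvent (hence only finitely many eigenvalues in $(\Lambda_{q-1},\Lambda_q)$), the nondecreasing eigenvalues of $\widetilde L$ in that interval coincide with $\{\ell_j^{(q)}\}$ after a finite shift of index. Passing to inverses, the eigenvalues $r_j^{(q)}$ of $\widetilde L^{-1}$ in $(\Lambda_q^{-1},\Lambda_q^{-1}+\delta)$ satisfy, for $j$ large, $r_{j}^{(q)}=1/\ell_{j'}^{(q)}$ where $j'=j+O(1)$, and
\[
r_j^{(q)}-\Lambda_q^{-1}=\frac{\Lambda_q-\ell_{j'}^{(q)}}{\Lambda_q\ell_{j'}^{(q)}}=\bigl(\Lambda_q^{-2}+o(1)\bigr)\bigl(\Lambda_q-\ell_{j'}^{(q)}\bigr),
\]
since $\ell_{j'}^{(q)}\to\Lambda_q$. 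Taking $j$-th roots the prefactor tends to $1$, and the finite index shift is again absorbed. Hence $\lim_{j\to\infty}(j!(\Lambda_q-\ell_j^{(q)}))^{1/j}=\tfrac{b}{2}(\kap(K))^2$, which is exactly~\eqref{eq-thm-2}.

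The main obstacle I expect is the squeezing step that moves the capacity of the approximating domains $K_0,K_1$ to that of $K$; everything else is essentially bookkeeping with min-max and with the $j$-th root trivialising multiplicative and finite-index perturbations. The continuity of $\kap$ under monotone $C^\infty$ approximation is classical but needs a careful citation, and for nonconvex $K$ one must be slightly attentive to choose $K_0,K_1$ as smooth tubular thickenings/contractions of $K$ so that $\partial K_i\cap\partial K=\varnothing$ while $\kap(K_i)\to\kap(K)$.
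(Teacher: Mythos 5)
Your argument is correct and follows the same route as the paper: reduce to $T_q = P_q V P_q$ via Lemma~\ref{lem:RT}, sandwich $T_q$ between Toeplitz operators $S_q^{K_0}$ and $S_q^{K_1}$ using Lemma~\ref{lem:toeplitztva} and the min-max principle, apply Theorem~\ref{lem:toepspectrum}, and squeeze by letting $K_0, K_1 \to K$. Your explicit handling of the finite index shift coming from the codimension-$\dim\mathcal{W}_q$ restriction, and your final paragraph passing from $r_j^{(q)}$ (eigenvalues of $\widetilde L^{-1}$) back to $\ell_j^{(q)}$ (eigenvalues of $L_\Omega$), are useful bookkeeping details that the paper leaves implicit but that do not change the approach.
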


\begin{proof}
Invoking the variational min-max principle, the result of 
Lemma~\ref{lem:toeplitztva} provides us with a sufficiently large
integer $j_0\in\mathbb N$ such that, for all $j\geq j_0$, we have,
\begin{displaymath}
\frac1C s_{j,K_0}^{(q)}\leq t_j^{(q)}\leq C s_{j,K_1}^{(q)}\,.
\end{displaymath}
Here $\{s_{j,K_0}^{(q)}\}_j$ and $\{s_{j,K_1}^{(q)}\}_j$ are the decreasing 
sequences of eigenvalues of $S_q^{K_0}$ and $S_q^{K_1}$ respectively.
Applying the result of Theorem~\ref{lem:toepspectrum} in the inequality 
above, we get
\begin{displaymath}
\frac{b}2\left(\text{Cap}(K_0)\right)^2
\leq\lim_{j\to{+\infty}}\left(j!t_j^{(q)}\right)^{1/j}
\leq\frac{b}2\left(\text{Cap}(K_1)\right)^2.
\end{displaymath}
Since both $K_0\subset K\subset K_1$ are arbitrary, we get by making them 
close to $K$,
\begin{displaymath}
\lim_{j\to{+\infty}}\left(j!t_j^{(q)}\right)^{1/j}=\frac{b}2
\left(\text{Cap}(K)\right)^2\,.
\end{displaymath}
Applying the above asymptotic limit in the estimate of Lemma~\ref{lem:RT}, 
we get the announced result in Corollary~\ref{cor:thm-resolvent} above.
\end{proof}

\begin{cor}
\label{cor:thm-resolventhigherd}
Equation~\eqref{eq-thm-higherd} holds true.
\end{cor}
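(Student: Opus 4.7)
The plan is to mimic the strategy of Corollary~\ref{cor:thm-resolvent}, counting eigenvalues rather than tracking exponential decay, and invoking Theorem~\ref{lem:toepspectrumhigherd} in place of Theorem~\ref{lem:toepspectrum}. First, I would translate the counting problem for $L_{\Omega,b}^\tau$ into one for $\widetilde L^{-1}$ via Lemma~\ref{lem:esssp}: an eigenvalue $\ell_j^{(q)}$ of $\widetilde L$ in $(\Lambda_{q-1},\Lambda_q-\epsilon)$ corresponds to an eigenvalue $r_j^{(q)}=(\ell_j^{(q)})^{-1}$ of $\widetilde L^{-1}$ with $r_j^{(q)}-\Lambda_q^{-1}>\eta(\epsilon)$, where $\eta(\epsilon):=(\Lambda_q-\epsilon)^{-1}-\Lambda_q^{-1}\sim\epsilon/\Lambda_q^2$ as $\epsilon\to 0^+$. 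Since $L_K$ has discrete spectrum, its eigenvalues contribute at most a bounded additive term to the count of $L_\Omega$-eigenvalues below $\Lambda_q-\epsilon$ in this window.

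Next, Lemma~\ref{lem:RT} replaces $r_j^{(q)}-\Lambda_q^{-1}$ by the eigenvalues $t_j^{(q)}$ of $T_q=P_qVP_q$ up to multiplicative factors arbitrarily close to $1$ and fixed additive shifts in the index; and for any admissible pair $K_0\subset K\subset K_1$ in Lemma~\ref{lem:toeplitztva}, the min-max principle applied to~\eqref{eq:inequality} sandwiches the count $n(\eta,T_q)$ of eigenvalues of $T_q$ exceeding a threshold $\eta$ between $n(C\eta,S_q^{K_0})-\dim\mathcal W_q$ and $n(C^{-1}\eta,S_q^{K_1})+\dim\mathcal W_q$. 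Chaining the two sandwiches produces, for suitable constants $c_1,c_2>0$ and some $c>0$ independent of $\epsilon$, and for all sufficiently small $\epsilon>0$,
\[
n\bigl(c_1\eta(\epsilon),S_q^{K_0}\bigr)-c\leq N\bigl(\Lambda_{q-1},\Lambda_q-\epsilon,L_{\Omega,b}^\tau\bigr)\leq n\bigl(c_2\eta(\epsilon),S_q^{K_1}\bigr)+c.
\]

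Applying Theorem~\ref{lem:toepspectrumhigherd} to the two Toeplitz operators produces asymptotics of the double-logarithmic form $(|\log\delta|/\log|\log\delta|)^d$, which is insensitive to the substitutions $\delta\mapsto c_i\eta(\epsilon)$ since $|\log(c_i\eta(\epsilon))|=|\log\epsilon|+O(1)$ and the double logarithm itself changes only by $o(1)$. Both the upper and lower bounds therefore converge to
\[
\binom{q+d-1}{d-1}\frac{1}{d!}\Bigl(\frac{|\log\epsilon|}{\log|\log\epsilon|}\Bigr)^d,
\]
which is~\eqref{eq-thm-higherd}. A pleasant contrast with the $d=1$ case is that Theorem~\ref{lem:toepspectrumhigherd} is shape-independent, so there is no need to let $K_0,K_1\to K$ or to bookkeep capacities. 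The only point that genuinely requires verification is that the accumulated additive corrections ($c$, $\dim\mathcal W_q$, index shifts from Lemma~\ref{lem:RT}) and multiplicative distortions ($c_1,c_2,C$) all disappear at leading order in the double-logarithmic scale; this is immediate from the explicit form of that scale and is, in effect, the only technical obstacle in the argument.
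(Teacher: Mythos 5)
Your proposal is correct and follows the same route the paper intends: the paper's own proof of this corollary is a single sentence ("This is clear from Lemma~\ref{lem:toeplitztva} and Theorem~\ref{lem:toepspectrumhigherd}."), and your argument is precisely the chain of reductions --- inversion to $\widetilde L^{-1}$, Lemma~\ref{lem:RT}, min-max with Lemma~\ref{lem:toeplitztva}, then Theorem~\ref{lem:toepspectrumhigherd} together with the observation that the double-logarithmic scale absorbs all multiplicative factors and finite additive shifts --- that the authors are leaving implicit.
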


\begin{proof}
This is clear from Lemma~\ref{lem:toeplitztva} and 
Theorem~\ref{lem:toepspectrumhigherd}.
\end{proof}

Summing up the results of 
Corollaries~\ref{corl:proof-mainthm},~\ref{cor:thm-resolvent} 
and~\ref{cor:thm-resolventhigherd}, we end up with the proof of 
Theorem~\ref{thm-GKPtwo}. All that remains is to prove 
Lemma~\ref{lem:toeplitztva}. That will be the subject of the next section.

\section{Proof of Lemma~\ref{lem:toeplitztva}}
\label{sec:jobbigtlemma}

The aim of this section is to prove Lemma~\ref{lem:toeplitztva}. 
The proof in this subsection goes along similar lines as in Subsection~4.2 
of~\cite{P}. Recall the operators $A$ and $B$ from~\eqref{eq-boundaryop}.

\begin{lemma}
\label{parametrixlemma}
Consider the elliptic operators 
$T_{\pm,\tau}=B+A\tau\pm \frac12
\in \Psi^0(\partial\Omega)$. There exist elliptic operators 
$R_{\pm,\tau}\in \Psi^0(\partial\Omega)$, with principal symbol 
$\sigma_0(R_{\pm,\tau})=\pm 2$, such that the operators
\[
T_{\pm,\tau} R_{\pm,\tau}-1\in \Psi^{-\infty}(\partial\Omega)
\quad\text{and}\quad R_{\pm,\tau} T_{\pm,\tau} -1
\in \Psi^{-\infty}(\partial\Omega)
\]
are of finite rank.
\end{lemma}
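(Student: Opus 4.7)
The plan is to construct $R_{\pm,\tau}$ as the Moore--Penrose generalized inverse of the elliptic operator $T_{\pm,\tau}$, then verify that it is a pseudo-differential operator by comparing it to a standard pseudo-differential parametrix.

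First I would observe that, since $A\in \Psi^{-1}(\partial\Omega)$, $\tau\in \Psi^0(\partial\Omega)$, and $B\in \Psi^{-1}(\partial\Omega)$ (cf.\ Lemma~\ref{lem:pseudo-zero}), the operator $T_{\pm,\tau}=B+A\tau\pm\tfrac12$ lies in $\Psi^0(\partial\Omega)$ with constant principal symbol $\sigma_0(T_{\pm,\tau})=\pm\tfrac12$. In particular $T_{\pm,\tau}$ is elliptic, so by the standard symbolic calculus (see e.g.~\cite[Chapter~I]{shubin} or~\cite[Theorem~18.1.24]{horIII}) there exists a parametrix $Q_{\pm,\tau}\in \Psi^0(\partial\Omega)$ with principal symbol $\sigma_0(Q_{\pm,\tau})=\pm 2$ such that
\[
S_{\pm,1}:=T_{\pm,\tau}Q_{\pm,\tau}-1\in \Psi^{-\infty}(\partial\Omega)\quad\text{and}\quad S_{\pm,2}:=Q_{\pm,\tau}T_{\pm,\tau}-1\in \Psi^{-\infty}(\partial\Omega).
\]

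Since $\partial\Omega$ is closed, the smoothing operators $S_{\pm,i}$ are compact, so $T_{\pm,\tau}$ is Fredholm on $L^2(\partial\Omega)$. By elliptic regularity, $\ker(T_{\pm,\tau})$ and $\ker(T_{\pm,\tau}^*)$ are finite-dimensional subspaces of $C^\infty(\partial\Omega)$, hence the orthogonal projections $P_{\pm,\tau}$ and $P'_{\pm,\tau}$ onto them are finite-rank operators with smooth Schwartz kernels; in particular $P_{\pm,\tau},P'_{\pm,\tau}\in \Psi^{-\infty}(\partial\Omega)$. Define $R_{\pm,\tau}$ to be zero on $\ker(T_{\pm,\tau}^*)$ and the inverse of the bijection $T_{\pm,\tau}\colon \ker(T_{\pm,\tau})^\perp\to \ker(T_{\pm,\tau}^*)^\perp$ on its range; this is a bounded operator on $L^2(\partial\Omega)$ satisfying
\[
T_{\pm,\tau}R_{\pm,\tau}=1-P'_{\pm,\tau},\qquad R_{\pm,\tau}T_{\pm,\tau}=1-P_{\pm,\tau}.
\]
These identities already give the required finite-rank statements, provided $R_{\pm,\tau}$ is pseudo-differential with principal symbol $\pm 2$.

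To upgrade $R_{\pm,\tau}$ to a pseudo-differential operator, I would multiply the identity $T_{\pm,\tau}Q_{\pm,\tau}=1+S_{\pm,1}$ on the left by $R_{\pm,\tau}$ to obtain
\[
(1-P_{\pm,\tau})Q_{\pm,\tau}=R_{\pm,\tau}+R_{\pm,\tau}S_{\pm,1},
\]
so that
\[
R_{\pm,\tau}=Q_{\pm,\tau}-P_{\pm,\tau}Q_{\pm,\tau}-R_{\pm,\tau}S_{\pm,1}.
\]
The term $P_{\pm,\tau}Q_{\pm,\tau}$ is in $\Psi^{-\infty}(\partial\Omega)$ because $P_{\pm,\tau}$ is finite-rank smoothing, and $R_{\pm,\tau}S_{\pm,1}$ is smoothing because $R_{\pm,\tau}$ is a bounded operator on every Sobolev space (by the analogous argument applied to $T_{\pm,\tau}^*$, the Moore--Penrose inverse preserves each $H^s(\partial\Omega)$) and $S_{\pm,1}$ is regularising. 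Consequently $R_{\pm,\tau}-Q_{\pm,\tau}\in \Psi^{-\infty}(\partial\Omega)$, so $R_{\pm,\tau}\in \Psi^0(\partial\Omega)$ with the same principal symbol $\pm 2$ as $Q_{\pm,\tau}$, and ellipticity follows.

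The main obstacle is the bootstrapping argument that shows the Moore--Penrose inverse acts continuously on every Sobolev space; once this is in hand the rest is routine symbolic calculus. This step follows because, using $Q_{\pm,\tau}$, any solution $u$ of $T_{\pm,\tau}u=f$ satisfies $u=Q_{\pm,\tau}f-S_{\pm,2}u$, so iteratively $u$ inherits the regularity of $f$ modulo the finite-dimensional smooth subspace $\ker(T_{\pm,\tau})$, giving the needed mapping property of $R_{\pm,\tau}$ on Sobolev scales.
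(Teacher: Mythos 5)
Your proof is correct, and it takes a genuinely different route from the paper. The paper works abstractly in the quotient algebras $\mathfrak{A}_{\text{fin}}:=\Psi^0(\partial\Omega)/\Psi^{-\infty}_{\text{fin}}(\partial\Omega)$ and $\mathfrak{A}:=\Psi^0(\partial\Omega)/\Psi^{-\infty}(\partial\Omega)$, and reduces the claim to the algebraic statement that a lift $\tilde a\in\mathfrak{A}_{\text{fin}}$ of an invertible element $a\in\mathfrak{A}$ is itself invertible; this is established by lifting $a^{-1}$, showing the resulting defects $1+s_{L/R}$ are invertible in $\mathfrak{A}_{\text{fin}}$ (using Fredholmness of $1+S_{L/R}$ and elliptic regularity to upgrade finite-rank corrections to finite-rank smoothing), and combining the two one-sided inverses. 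You instead construct $R_{\pm,\tau}$ directly as the Moore--Penrose inverse of $T_{\pm,\tau}$ and verify that it is pseudo-differential by comparing it with a symbolic parametrix: the identity $R_{\pm,\tau}=Q_{\pm,\tau}-P_{\pm,\tau}Q_{\pm,\tau}-R_{\pm,\tau}S_{\pm,1}$, together with Sobolev boundedness of $R_{\pm,\tau}$ on the whole scale (obtained via the bootstrap $u=Q_{\pm,\tau}(1-P'_{\pm,\tau})f-S_{\pm,2}u$ and duality for negative $s$), shows $R_{\pm,\tau}-Q_{\pm,\tau}\in\Psi^{-\infty}(\partial\Omega)$. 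Both arguments ultimately rely on the same mechanism (elliptic regularity placing the kernel and cokernel inside $C^\infty(\partial\Omega)$, plus boundedness of a generalized inverse on every $H^s$), but your version is more concrete and produces the operator $R_{\pm,\tau}$ explicitly, whereas the paper's version is shorter once one accepts the algebraic framework and makes the structure of the argument (invertibility passing along a quotient map) more transparent. One small point worth making explicit in your write-up: in the bootstrap for Sobolev boundedness, when passing from $u=Q_{\pm,\tau}(1-P'_{\pm,\tau})f-S_{\pm,2}u$ to a norm bound, the term $S_{\pm,2}u$ is controlled by $\|u\|_{L^2}\leq C\|f\|_{L^2}$ because $S_{\pm,2}$ maps $L^2$ into every $H^t$, which is what closes the estimate for $s\geq 0$; you gesture at this but it deserves a sentence.
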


Lemma \ref{parametrixlemma} could be considered folklore.
In lack of a reference we provide a proof of its statement.

\begin{proof}
We let $\Psi^{-\infty}_{\text{fin}}(\partial\Omega)\subseteq \Psi^0(\partial\Omega)$ 
denote the ideal of finite rank smoothing operators. 
We consider the unital algebras 
\[
\mathfrak{A}_{\text{fin}}:=\Psi^0(\partial\Omega)/\Psi^{-\infty}_{\text{fin}}(\partial \Omega)
\quad\mbox{and}\quad 
\mathfrak{A}:=\Psi^0(\partial\Omega)/\Psi^{-\infty}(\partial \Omega).
\]
There is a quotient mapping $\mathfrak{A}_{\text{fin}}\to  \mathfrak{A}$ 
whose kernel is $\Psi^{-\infty}(\partial\Omega)/\Psi^{-\infty}_{\text{fin}}(\partial\Omega)$. 
It follows by means of the standard techniques of 
pseudo-differential operators that an element $T\in \Psi^0(\partial\Omega)$ 
is elliptic if and only if the equivalence class $T\mod \Psi^{-\infty}(\partial\Omega)$ 
is an invertible element of $\mathfrak{A}$. Hence the Lemma follows if we can 
prove that if $\tilde{a}\in  \mathfrak{A}_{\text{fin}}$ satisfies that 
$a:=\tilde{a}\mod \Psi^{-\infty}(\partial\Omega)/\Psi^{-\infty}_{\text{fin}}(\partial\Omega) \in \mathfrak{A}$ 
is invertible, then so is $\tilde{a}$. 

We choose a lift $\tilde{r}\in \mathfrak{A}_{\text{fin}}$ of $a^{-1}$ 
and consider the elements in $\Psi^{-\infty}(\partial\Omega)/\Psi^{-\infty}_{\text{fin}}(\partial\Omega)$
defined by
\[
s_L:=\tilde{r} \tilde{a}-1_{\mathfrak{A}_{\text{fin}}}
\quad\mbox{and}\quad  
s_R:=\tilde{a} \tilde{r}-1_{\mathfrak{A}_{\text{fin}}}.
\]
We can lift $s_L$ and $s_R$ to smoothing operators $S_L$, $S_R\in \Psi^{-\infty}(\partial\Omega)$. 
The operators $1+S_L$ and $1+S_R$ are elliptic operators of index $0$. Fredholm operators of 
index $0$ are invertible modulo finite rank operators. If follows by elliptic regularity that $1+S_L$ 
and $1+S_R$ are invertible modulo finite rank smoothing operators. Hence $1_{\mathfrak{A}_{\text{fin}}}+s_L$ 
and $1_{\mathfrak{A}_{\text{fin}}}+s_R$ are invertible elements of $\mathfrak{A}_{\text{fin}}$.

Let us define $\tilde{r}_R:=\tilde{r}(1_{\mathfrak{A}_{\text{fin}}}+s_R)^{-1}$ and 
$\tilde{r}_L:=(1_{\mathfrak{A}_{\text{fin}}}+s_L)^{-1}\tilde{r}$.
A direct computation shows that 
\[
\tilde{a}\tilde{r}_R=\tilde{r}_L\tilde{a}=1_{\mathfrak{A}_{\text{fin}}}.
\]
Multiplying the identity $\tilde{a}\tilde{r}_R=1_{\mathfrak{A}_{\text{fin}}}$ with $\tilde{r}_L$ from the left 
proves that $\tilde{r}_R=\tilde{r}_L$.  It follows that $\tilde{a}$ is invertible with inverse $\tilde{r}_L$.
\end{proof}

By Lemma~\ref{smallperturbation}, 
it is generically the case that $T_{\pm,\tau}$ is invertible.
To simplify notation we set 
$\hat{F}_{\pm,\tau}:=R_{\pm,\tau} T_{\pm,\tau} -1$.

\begin{lemma}
\label{lem:Tn}
Let $q\in\Nzero$. There exists a finite rank smoothing operator 
$F_q:L^2(\R^{2d})\to L^2(\partial\Omega)$ such that for 
all $f,g\in L^2(\R^{2d})$,
\begin{equation}\label{eq-lem:Tn}
\langle f,T_q g\rangle_{L^2(\R^{2d})}
=\frac{1}{\Lambda_q^2}\int_{\partial\Omega}
(P_qf)\cdot\overline{T_{\tau,q}(P_q g)}\, d S
+\langle \gamma_0 P_qf,F_qg\rangle_{L^2(\partial\Omega)}\,,
\end{equation}
where $T_{\tau,q}$ is the elliptic operator defined by
\[
T_{\tau,q}:=\left(\DRq+S_q\right)^*(R_{-,\tau}-R_{+,\tau})A\left(\DRq
+S_q\right) \in \Psi^1(\partial\Omega).
\]
\end{lemma}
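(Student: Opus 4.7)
The plan is to apply Lemma~\ref{lem:V} to the projected functions $P_q f$ and $P_q g$, which lie in $\mathcal{L}_q$, and then to rewrite the resulting boundary integral using the Robin-to-Dirichlet calculus from Subsection~\ref{sec:DRRD}. First I would use $T_q = P_q V P_q$ together with the eigenvalue identity $L^{-1}(P_q f)=\Lambda_q^{-1}P_q f$ to conclude from Lemma~\ref{lem:V} that
\[
\langle f, T_q g\rangle_{L^2(\R^{2d})}
= \Lambda_q^{-1}\int_{\partial\Omega}\partial_R(P_q f)\cdot\overline{(v_\Omega - v_K)}\,dS,
\]
where $v=\widetilde{L}^{-1}(P_q g)=v_\Omega\oplus v_K$. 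To handle $v_\Omega - v_K$ on the boundary I would write $v_\Omega=\Lambda_q^{-1}(P_q g)|_\Omega + z_\Omega$, so that $z_\Omega$ solves the homogeneous exterior problem $Lz_\Omega=0$ in $\Omega$ with Robin datum $\partial_R z_\Omega=-\Lambda_q^{-1}\partial_R(P_q g)$ (and analogously for $z_K$ in $K^\circ$). Applying the Robin-to-Dirichlet operators $\RDO$ and $\RDK$ and observing that the contributions from $\Lambda_q^{-1}(P_q g)|_\Omega$ and $\Lambda_q^{-1}(P_q g)|_K$ cancel because $P_q g\in \mathcal{L}_q$ is smooth across $\partial\Omega$, I obtain
\[
\gamma_0 v_\Omega-\gamma_0 v_K
= \Lambda_q^{-1}(\RDK-\RDO)\,\partial_R(P_q g).
\]

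Next I would invoke Remark~\ref{DvNvsnormal} on the Landau eigenfunctions to rewrite $\partial_R(P_q f)=(\DRq+S_q)\gamma_0 P_q f$ and $\partial_R(P_q g)=(\DRq+S_q)\gamma_0 P_q g$; taking adjoints transforms the boundary integral into
\[
\Lambda_q^{-2}\bigl\langle \gamma_0 P_q f,\,(\DRq+S_q)^*(\RDK-\RDO)(\DRq+S_q)\,\gamma_0 P_q g\bigr\rangle_{L^2(\partial\Omega)}.
\]
Then I would use Lemma~\ref{parametrixlemma} together with the identities $T_{-,\tau}\RDK=A$ and $T_{+,\tau}\RDO=A$ from~\eqref{eq:RDDR}, applying the parametrices $R_{\pm,\tau}$ on the left, to conclude that $\RDK-\RDO = (R_{-,\tau}-R_{+,\tau})A$ modulo a finite rank smoothing operator on $\partial\Omega$. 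Substituting isolates the principal contribution $\Lambda_q^{-2}\langle\gamma_0 P_q f,\,T_{\tau,q}\gamma_0 P_q g\rangle_{L^2(\partial\Omega)}$, and every remaining term is the composition of a finite rank smoothing operator with bounded pseudo-differential operators ($\DRq$, $S_q$, $A$, $R_{\pm,\tau}$) and the trace $\gamma_0\circ P_q$. These assemble into a single finite rank smoothing operator $F_q:L^2(\R^{2d})\to L^2(\partial\Omega)$ of the form $F_q g = \Lambda_q^{-2}F'_q\gamma_0 P_q g$, yielding \eqref{eq-lem:Tn}.

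The main obstacle I anticipate is a clean justification of the identity $\RDK-\RDO\equiv(R_{-,\tau}-R_{+,\tau})A$ modulo finite rank smoothing operators, since the relations in~\eqref{eq:RDDR} only hold on subspaces of finite codimension and each of $\RDK,\RDO$ is defined only up to a chosen pseudo-differential extension off a finite-dimensional subspace. This requires careful parametrix bookkeeping — extending $\RDK$ and $\RDO$ via $R_{\pm,\tau}A$, absorbing the resulting discrepancies into $\hat{F}_{\pm,\tau}\in\Psi^{-\infty}_{\text{fin}}(\partial\Omega)$, and verifying that all such discrepancies stay within the finite rank smoothing ideal after composition with $(\DRq+S_q)^*$ and $(\DRq+S_q)$.
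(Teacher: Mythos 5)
Your overall strategy is the same as the paper's: project onto $\mathcal{L}_q$, localize to the boundary via Lemma~\ref{lem:V}, split $v_\Omega - v_K$ by inserting the reference solution $w=\Lambda_q^{-1}P_q g$ (which is your $\Lambda_q^{-1}(P_qg)|_\Omega$, $\Lambda_q^{-1}(P_qg)|_K$), and then convert Robin data to Dirichlet data on both sides using Remark~\ref{DvNvsnormal} and the parametrix from Lemma~\ref{parametrixlemma}. However, the gap you flag at the end is real, and the paper takes a deliberately different (slightly lower-tech) route to avoid it. Instead of first producing the operator identity $\gamma_0 v_\Omega-\gamma_0 v_K = \Lambda_q^{-1}(\RDK-\RDO)\partial_R(P_qg)$ and then trying to replace $\RDK-\RDO$ by $(R_{-,\tau}-R_{+,\tau})A$ up to finite rank, the paper applies the \emph{exact} boundary relations~\eqref{eq:boundaryrelations} directly to the concrete functions $v_\Omega - w$ and $w - v_K$ (each of which solves $Lu=0$ on its side of $\partial\Omega$), obtaining
\[
T_{+,\tau}\gamma_0(v_\Omega-w)=A\partial_{R,\Omega}(v_\Omega-w),\qquad
T_{-,\tau}\gamma_0(w-v_K)=A\partial_{R,K}(w-v_K),
\]
and only then hits these with the parametrices $R_{\pm,\tau}$, so that the remainder is literally $-\hat{F}_{+,\tau}\gamma_0(v_\Omega-w)-\hat{F}_{-,\tau}\gamma_0(w-v_K)$ with $\hat{F}_{\pm,\tau}$ finite rank. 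This sidesteps entirely the question of whether $\RDK$ and $\RDO$, as chosen pseudo-differential extensions, reproduce the boundary values of the particular solutions $v_\Omega-w$ and $w-v_K$.

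A related consequence is that your claimed form of the error, $F_q g = \Lambda_q^{-2}F'_q\gamma_0 P_q g$, is not what comes out. The remainder terms $\hat{F}_{\pm,\tau}\gamma_0(v_\Omega-w)$ and $\hat{F}_{\pm,\tau}\gamma_0(w-v_K)$ involve $\gamma_0 v_\Omega = \gamma_0 L_\Omega^{-1}P_qg$ and $\gamma_0 v_K = \gamma_0 L_K^{-1}P_qg$, not merely $\gamma_0 P_qg$; accordingly the paper's $F_q$ is
\[
\Lambda_q F_q=(\DRq+S_q)^*\Bigl(\Lambda_q^{-1}(\hat{F}_{-,\tau}-\hat{F}_{+,\tau})
+\hat{F}_{+,\tau}\gamma_0 L_\Omega^{-1}-\hat{F}_{-,\tau}\gamma_0 L_K^{-1}\Bigr)P_q,
\]
which is finite rank because $\hat{F}_{\pm,\tau}$ are, but does not factor through $\gamma_0 P_q$. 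This is harmless for the statement of the lemma (which only asks for \emph{some} finite rank $F_q:L^2(\R^{2d})\to L^2(\partial\Omega)$), but you should not assert the stronger factored form. If you restructure your argument to follow the paper and apply $R_{\pm,\tau}$ to the exact relations~\eqref{eq:boundaryrelations} rather than to a putative operator identity for $\RDK-\RDO$, the bookkeeping you were worried about becomes automatic and the proof closes.
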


\begin{proof}
We set $u=L^{-1}P_qf=\Lambda_q^{-1}P_qf$,
$v=\widetilde{L}^{-1} P_qg=v_\Omega\oplus v_K$ and
$w=L^{-1}P_qg=\Lambda_q^{-1}P_qg$. Notice that
\begin{displaymath}
\langle f,T_q g\rangle_{L^2(\R^{2d})}
=\langle P_q f,V P_q g\rangle_{L^2(\R^{2d})}
\end{displaymath}
where $V$ is the operator defined in~\eqref{eq-V}. 
Invoking Lemma~\ref{lem:V}, we 
write,
\begin{align*}
\langle P_q f,V P_q g\rangle_{L^2(\R^{2d})} 
&= \int_{\partial\Omega}\partial_R u
\cdot
\overline{(v_\Omega-v_K)}\, d S\\
&=\int_{\partial\Omega}\partial_R u
\cdot
\overline{(v_\Omega-w+w-v_K)}\, d S.
\end{align*}
Note that since $u\in H^1_{\Ab_0}(\R^{2d})$, $\partial_R u=\partial_{R,K}u
=\partial_{R,\Omega}u$. Using~\eqref{eq:RDDR} and 
Lemma~\ref{parametrixlemma}, we can write further,
\begin{align}
\label{firstandsecond}
\langle P_q f, V P_q g\rangle_{L^2(\R^{2d})}
&=\int_{\partial\Omega}
\partial_R u
\cdot
\overline{(R_{+,\tau}A(\partial_{R,\Omega}(v_\Omega-w))+R_{-,\tau}
A(\partial_{R,K}(w-v_K)))}\, d S\\
\nonumber
&\quad-\int_{\partial\Omega}
\partial_R u
\cdot
\overline{\hat{F}_{+,\tau}(v_\Omega-w)
+\hat{F}_{-,\tau}(w-v_K))}\, d S.
\end{align}
Notice that $v_\Omega$ and $v_K$ are in the domain of the operators
$L_\Omega$ and $L_K$ respectively, hence 
$\partial_R v_\Omega=\partial_R v_K=0$. By construction, $L_qu=L_qw=0$ in 
$K$, and Remark~\ref{DvNvsnormal} implies that 
\begin{equation}
\label{magicalformulas}
\partial_R u=(\DRq+S_q)
\gamma_0u
\quad\mbox{and} \quad 
\partial_R w
=(\DRq+S_q)\gamma_0w,
\end{equation}
for some finite rank smoothing operator 
$S_q\in \Psi^{-\infty}(\partial\Omega)$. 
Consequently, after applying this identity to the first term 
in~\eqref{firstandsecond} we get
\begin{align*}
\label{eq-lem-RD}
\langle P_qf,&VP_qg\rangle_{L^2(\R^{2d})}
+\int_{\partial\Omega}
\partial_R u
\cdot
\overline{\hat{F}_{+,\tau}(v_\Omega-w)
+\hat{F}_{-,\tau}(w-v_K))}\,d S\\
&=\int_{\partial\Omega}(\DRq+S_q)u
\cdot
\overline{(R_{-,\tau}-R_{+,\tau})A(\DRq+S_q)w}\,d S
=\langle u,T_{\tau,q}w\rangle_{L^2(\partial \Omega)}.
\end{align*} 

As for the second term in~\eqref{firstandsecond},
\[
\begin{multlined}
\int_{\partial\Omega}\partial_R 
u
\cdot\overline{\hat{F}_{+,\tau}(v_\Omega-w)
+\hat{F}_{-,\tau}(w-v_K))}\, d S\\
=\int_{\partial\Omega}  
u
\cdot
\overline{\underbrace{(\DRq+S_q)^*(\Lambda_q^{-1}
(\hat{F}_{-,\tau}-\hat{F}_{+,\tau})+(\hat{F}_{+,\tau}L_\Omega^{-1}
-\hat{F}_{-,\tau}L_K^{-1}))P_q}_{\Lambda_q F_q}g}\, d S.
\end{multlined}
\]
Since $\hat{F}_{\pm,\tau}$ are of finite rank, it is clear that $F_q$ is of 
finite rank.
\end{proof}

\begin{lemma}
\label{controllingnonpositive}
The operator $T_{\tau,q}$ has discrete spectrum, and there exists a finite rank 
operator $S_{\tau,q}\in \Psi^{-\infty}(\partial\Omega)$ such that for some 
$b,C>0$,
\[\|\phi\|_{H^{1/2}(\partial\Omega)}
\leq C\left(\Real\langle\phi, T_{\tau,q}\phi\rangle_{L^2(\partial\Omega)}
+b\|S_{\tau,q}\phi\|_{L^2(\partial\Omega)}\right).\]
\end{lemma}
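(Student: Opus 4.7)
The plan is to recognize $T_{\tau,q}$ as an elliptic classical pseudo-differential operator of order $1$ on the closed manifold $\partial\Omega$, and then to invoke a Gårding-type inequality. First, I would verify that $T_{\tau,q}\in \Psi^1(\partial\Omega)$ by composition, with the orders adding as $1+0+(-1)+1=1$: the operator $\DRq$ (and its adjoint) lies in $\Psi^1$ with constant real principal symbol by Lemma~\ref{anelliptic}, $R_{-,\tau}-R_{+,\tau}\in \Psi^0$ has constant principal symbol equal to $-4$ by Lemma~\ref{parametrixlemma}, $A\in \Psi^{-1}$ has constant principal symbol by Lemma~\ref{lem:pseudo-zero}, and the smoothing operator $S_q$ contributes nothing to the principal symbol. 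Multiplying these out, $\sigma_1(T_{\tau,q})$ is a non-zero real constant multiple of $|\xi|$, so $T_{\tau,q}$ is elliptic of order $1$ with constant principal symbol.

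Second, with ellipticity in hand, the standard Gårding inequality applied to $T_{\tau,q}$ provides constants $c, C>0$ such that
\[
\Real\langle\phi, T_{\tau,q}\phi\rangle_{L^2(\partial\Omega)}\geq c\|\phi\|_{H^{1/2}(\partial\Omega)}^2 - C\|\phi\|_{L^2(\partial\Omega)}^2, \qquad \phi\in H^{1/2}(\partial\Omega).
\]
To convert the $L^2$ remainder into a finite-rank one, I would use the compactness of the embedding $H^{1/2}(\partial\Omega)\hookrightarrow L^2(\partial\Omega)$: for any $\epsilon>0$ there exists a finite-rank projection $P_N\in \Psi^{-\infty}(\partial\Omega)$, for instance onto the span of the first $N$ eigenfunctions of a positive Laplace-type operator on $\partial\Omega$, such that $\|\phi\|_{L^2}^2\leq \epsilon\|\phi\|_{H^{1/2}}^2 + C_\epsilon\|P_N\phi\|_{L^2}^2$. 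Choosing $\epsilon$ small enough to absorb the Sobolev-norm remainder into the left-hand side of the Gårding bound, and setting $S_{\tau,q}:=P_N$, yields the inequality asserted in the lemma (after harmlessly adjusting $C$ and $b$).

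Finally, for the discrete spectrum of $T_{\tau,q}$, the coercivity just established implies that $T_{\tau,q}+\lambda$ is invertible for $\lambda$ sufficiently large, with bounded inverse $H^{-1/2}(\partial\Omega)\to H^{1/2}(\partial\Omega)$; composing with the compact embedding $H^{1/2}\hookrightarrow L^2$ makes the resolvent compact on $L^2(\partial\Omega)$, so the general theory of operators with compact resolvent yields that the spectrum of $T_{\tau,q}$ is discrete. The main obstacle I anticipate is the careful sign bookkeeping in the first step: the various orientation-dependent contributions to $T_{\tau,q}$ (the outward normal of $\Omega$ versus that of $K$, the $\pm\tfrac12$ jumps in the limit relations~\eqref{eq:limitrelations}, and the signs of the constant principal symbols of $A$, $\DRq$ and the $R_{\pm,\tau}$) must line up correctly so that $\sigma_1(T_{\tau,q})$ has the sign for which Gårding's inequality delivers a lower bound on $\Real\langle\phi, T_{\tau,q}\phi\rangle$ rather than an upper bound.
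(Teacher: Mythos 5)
Your proof is correct and follows essentially the same route as the paper: recognize $T_{\tau,q}$ as an elliptic first-order classical pseudo-differential operator with (real, constant, sign-definite) principal symbol, apply the G\aa rding inequality to bound $\|\phi\|_{H^{1/2}}^2$ by $\Real\langle\phi,T_{\tau,q}\phi\rangle$ plus an $L^2$-remainder, absorb that remainder using a finite-rank smoothing projection, and get discreteness of the spectrum from ellipticity. The only (cosmetic) difference is the choice of $S_{\tau,q}$: the paper takes the spectral projection of $\tfrac12(T_{\tau,q}+T_{\tau,q}^*)$ onto its finitely many non-positive eigenvalues and appeals to~\cite[Theorem~I.8.4]{shubin} for discreteness, whereas you take a spectral truncation of a reference operator and argue via compactness of the resolvent; both work.
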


\begin{proof}
By an argument similar to that of Corollary~\ref{genericgamma}, it follows from 
\cite[Theorem I.$8.4$]{shubin} and the G\aa rding inequality that 
$T_{\tau,q}$ has discrete spectrum. We define the elliptic self-adjoint 
first order 
pseudo-differential operator
\[
\tilde{T}_{\tau,q}:=\frac12(T_{\tau,q}+T_{\tau,q}^*).
\]
Since $T_{\tau,q}$ has positive principal symbol, we see that 
$T_{\tau,q}-\tilde{T}_{\tau,q}\in \Psi^0(\partial\Omega)$ defines a bounded 
operator. The G\aa rding inequality implies that for some $b,C>0$,
\[\|\phi\|_{H^{1/2}(\partial\Omega)}\leq 
C\left(\langle \phi,\tilde{T}_{\tau,q}\phi\rangle_{L^2(\partial\Omega)}+
b\|\phi\|_{L^2(\partial\Omega)}\right).\]
Since $\tilde{T}_{\tau,q}$ is of order $1$, elliptic and self-adjoint, its 
spectrum is a discrete subset of $\R$. 
The G\aa rding inequality implies that 
$\tilde{T}_{\tau,q}$ 
is bounded from below, so the spectrum of $\tilde{T}_{\tau,q}$ only accumulates 
at $+\infty$ and there are only finitely many non-positive eigenvalues. 
We define $S_{\tau,q}\in \Psi^{-\infty}(\partial\Omega)$ as the finite rank 
projection onto the non-positive eigenspace of $\tilde{T}_{\tau,q}$. For a, 
possibly new, constant $C$, the Lemma follows because 
\[
\Real\langle \phi, T_{\tau,q}\phi\rangle_{L^2(\partial\Omega)}
=\langle \phi, \tilde{T}_{\tau,q}\phi\rangle_{L^2(\partial\Omega)}.
\]
\end{proof}

\begin{prop}
\label{fredholmofl}
The linear operator
\begin{equation}
\label{hetttillhminett}
H^1(K)\to H^{-1}(K)\oplus H^{1/2}(\partial\Omega), 
\quad f\mapsto (L-\Lambda_q)f\oplus \gamma_{0,K}f,
\end{equation}
is Fredholm.
\end{prop}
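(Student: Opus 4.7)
The plan is to reduce the Fredholm property of $T\colon u\mapsto ((L-\Lambda_q)u,\gamma_{0,K}u)$ to that of the weak Dirichlet realisation of $L-\Lambda_q$ on $K$, and then split off the nonzero boundary data by a bounded lifting of the trace. Boundedness of $T$ between the stated spaces is immediate: $L-\Lambda_q$ is a second-order differential operator with smooth bounded coefficients on the compact set $K$, hence it maps $H^1(K)\to H^{-1}(K)$ continuously, and $\gamma_{0,K}\colon H^1(K)\to H^{1/2}(\partial\Omega)$ is the standard trace.

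First, I would establish that the weak Dirichlet operator
\[
\mathcal{L}_D := (L-\Lambda_q)|_{H^1_0(K)}\colon H^1_0(K)\to H^{-1}(K)
\]
is Fredholm of index zero. For $M$ sufficiently large, G\aa rding's inequality gives coercivity of the shifted form $u\mapsto \l_K(u)+(M-\Lambda_q)\|u\|_{L^2(K)}^2$ on $H^1_0(K)$, so $\mathcal{L}_D+M\colon H^1_0(K)\to H^{-1}(K)$ is an isomorphism by Lax--Milgram. Since the embedding $H^1_0(K)\hookrightarrow L^2(K)\hookrightarrow H^{-1}(K)$ is compact by Rellich's theorem, $\mathcal{L}_D = (\mathcal{L}_D+M)-M$ is a compact perturbation of an isomorphism and hence Fredholm of index $0$. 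Denote its finite-dimensional kernel by $\mathcal{N}$.

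Next, I would conclude the Fredholm property of $T$ by the standard lifting argument. Fix any bounded right inverse $E\colon H^{1/2}(\partial\Omega)\to H^1(K)$ of the trace $\gamma_{0,K}$. Writing a candidate solution as $u=Eg+v$ with $v\in H^1_0(K)$ turns the system $Tu=(f,g)$ into the single equation $\mathcal{L}_D v = f-(L-\Lambda_q)Eg$ in $H^{-1}(K)$, which by the previous step is solvable precisely when the right-hand side lies in $\operatorname{range}(\mathcal{L}_D)$, a closed subspace of codimension $\dim\mathcal{N}$. It follows that $\ker T = \mathcal{N}$ is finite-dimensional and that $\operatorname{range}(T)$ is the preimage under the continuous affine map $(f,g)\mapsto f-(L-\Lambda_q)Eg$ of the closed finite-codimensional subspace $\operatorname{range}(\mathcal{L}_D)$, so $\operatorname{range}(T)$ is closed and of finite codimension in $H^{-1}(K)\oplus H^{1/2}(\partial\Omega)$.

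The only piece requiring genuine attention is the coercivity bound in the first step, i.e., the G\aa rding inequality for the magnetic form at the low-regularity scale $H^1_0\to H^{-1}$. Nothing magnetic really intervenes since $\Ab_0$ is bounded on the compact set $K$ and the principal part of $L$ is just $-\Delta$; the argument is the standard variational treatment of a strongly elliptic second-order operator with smooth coefficients on a compact smooth domain.
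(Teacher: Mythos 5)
Your proof is correct and follows essentially the same line as the paper: reduce to the invertibility of a shifted Dirichlet problem and then absorb the difference $L-\Lambda_q-(L+\lambda)$ as a compact perturbation via Rellich. Where the paper simply cites elliptic boundary value problem theory to obtain invertibility of $f\mapsto (L+\lambda)f\oplus\gamma_{0,K}f$ for large $\Real\lambda$, you unpack exactly that claim using Lax--Milgram on $H^1_0(K)$ together with a bounded trace-lifting, which is the standard route at the variational $H^1\to H^{-1}\oplus H^{1/2}$ regularity level.
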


\begin{proof}
We note that the strongly elliptic differential operator $L-\Lambda_q$ defines 
an elliptic boundary value problem when equipped with Dirichlet condition as 
in~\eqref{hetttillhminett}. 
It follows that the linear operator given in~\eqref{hetttillhminett} is Fredholm 
because it is a compact perturbation of the operator 
$f\mapsto (L+\lambda)f\oplus \gamma_{0,K}f$ which is invertible for $\Re(\lambda)$ 
large enough. 
\end{proof}

\begin{remark}
\label{splittingremark}
As a consequence of Proposition~\ref{fredholmofl}, there exists a bounded linear 
extension mapping $\mathfrak{Q}_q:H^{1/2}(\partial\Omega)\to H^1(K)$ 
satisfying 
$\gamma_0\mathfrak{Q}_q\phi=\phi$ and $(L-\Lambda_q)\mathfrak{Q}_q \phi=0$ in 
$K^\circ$ for $\phi$ outside a finite-dimensional subspace of 
$H^{1/2}(\partial\Omega)$.  This finite-dimensional subspace can be chosen as 
the orthogonal complement of the space 
$\gamma_{0,K}\left(\ker\left((L-\Lambda_q):H^1(K)\to H^{-1}(K)\right)\right)$. 
We conclude that there exists a finite rank operator $F_K$ on $H^1(K)$, 
smoothing 
in the interior, such that whenever $f\in H^1(K)$ satisfies $(L-\Lambda_q)f=0$ 
in $K^\circ$,
\[
\|f\|_{H^1(K)}\leq 
C\left(\|\gamma_0 f\|_{H^{1/2}(\partial\Omega)}+\|F_K f\|_{H^1(K)}\right),
\]
for some constant $C$. 
\end{remark}

\begin{lemma}\label{lem:Tn2}
Let $q\in\Nzero$ and let $F_q:L^2(\R^{2d})\to L^2(\partial\Omega)$ be the 
finite rank operator of Lemma~\ref{lem:Tn}, 
$S_{\tau,q}\in \Psi^{-\infty}(\partial\Omega)$ the finite rank operator of 
Lemma~\ref{controllingnonpositive} 
and $P_q$ the orthogonal projection on the Landau level 
$\mathcal L_q$. There exist constants $b>0$, $C>1$, 
such that for all 
$f\in L^2(\R^{2d})$ it holds that,
\begin{align*}
\frac1C\|\gamma_0&P_q f\|_{H^{1/2}(\partial\Omega)}
(\|\gamma_0P_qf\|_{H^{1/2}(\partial\Omega)}-
\|F_qf\|_{L^2(\partial\Omega)}
-b\|S_{\tau,q}\gamma_0P_qf\|_{L^2(\partial\Omega)})\\
&\leq  \langle f,T_qf\rangle_{L^2(\R^{2d})}
\leq C\|\gamma_0P_qf\|_{H^{1/2}(\partial\Omega)}
(\|\gamma_0P_qf\|_{H^{1/2}(\partial\Omega)}+\|F_qf\|_{L^2(\partial\Omega)})\,.
\end{align*}
\end{lemma}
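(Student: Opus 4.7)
The plan is to apply Lemma~\ref{lem:Tn} with $g=f$, introducing the shorthand $\psi:=\gamma_0 P_q f$, and then to exploit the order-$1$ elliptic nature of $T_{\tau,q}$ for the upper bound and the G\aa rding-type inequality of Lemma~\ref{controllingnonpositive} for the lower bound.

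First, since $\langle f,T_q f\rangle=\langle P_q f, V P_q f\rangle$ is real and nonnegative, taking the real part of the identity in Lemma~\ref{lem:Tn} produces
\begin{equation*}
\langle f,T_q f\rangle=\frac{1}{\Lambda_q^2}\Real\langle \psi, T_{\tau,q}\psi\rangle_{L^2(\partial\Omega)}+\Real\langle \psi, F_q f\rangle_{L^2(\partial\Omega)}.
\end{equation*}
For the upper bound I use that $T_{\tau,q}\in\Psi^1(\partial\Omega)$ is bounded from $H^{1/2}(\partial\Omega)$ to $H^{-1/2}(\partial\Omega)$, so the first term is dominated by $C_1\|\psi\|_{H^{1/2}(\partial\Omega)}^2$; Cauchy--Schwarz together with the inclusion $H^{1/2}(\partial\Omega)\hookrightarrow L^2(\partial\Omega)$ yields $|\langle \psi, F_q f\rangle|\leq \|\psi\|_{H^{1/2}(\partial\Omega)}\|F_q f\|_{L^2(\partial\Omega)}$. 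Summing and factoring $\|\psi\|_{H^{1/2}(\partial\Omega)}$ gives the claimed upper bound.

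For the lower bound I would apply Lemma~\ref{controllingnonpositive} in its natural quadratic G\aa rding form, producing constants $c,b'>0$ with $\Real\langle \psi, T_{\tau,q}\psi\rangle\geq c\|\psi\|_{H^{1/2}(\partial\Omega)}^2-b'\|S_{\tau,q}\psi\|_{L^2(\partial\Omega)}^2$. Since $S_{\tau,q}$ is bounded on $L^2(\partial\Omega)$ and $\|\psi\|_{L^2(\partial\Omega)}\leq\|\psi\|_{H^{1/2}(\partial\Omega)}$, one has
\begin{equation*}
\|S_{\tau,q}\psi\|_{L^2(\partial\Omega)}^2\leq \|S_{\tau,q}\|_{\mathrm{op}}\|\psi\|_{H^{1/2}(\partial\Omega)}\|S_{\tau,q}\psi\|_{L^2(\partial\Omega)},
\end{equation*}
which converts the quadratic remainder into the linear form appearing in the statement. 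Applying Cauchy--Schwarz once more to the $F_q$ term and factoring $\|\psi\|_{H^{1/2}(\partial\Omega)}$ produces the required lower bound after renaming constants.

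The step that demands the most care is the algebraic rearrangement into the precise product form $\|\psi\|_{H^{1/2}(\partial\Omega)}(\|\psi\|_{H^{1/2}(\partial\Omega)}\pm\|F_q f\|_{L^2(\partial\Omega)}\pm b\|S_{\tau,q}\psi\|_{L^2(\partial\Omega)})$: the G\aa rding estimate naturally yields an error quadratic in $\psi$, so the term $\|S_{\tau,q}\psi\|_{L^2(\partial\Omega)}^2$ must be traded for the product $\|\psi\|_{H^{1/2}(\partial\Omega)}\|S_{\tau,q}\psi\|_{L^2(\partial\Omega)}$ using the smoothing nature of $S_{\tau,q}$. Everything else is bookkeeping of constants into a single $C>1$.
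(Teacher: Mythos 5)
Your proof is correct and takes the same approach as the paper: apply the identity from Lemma~\ref{lem:Tn} with $g=f$, bound $\Real\langle\psi,T_{\tau,q}\psi\rangle$ from above via the $H^{1/2}(\partial\Omega)\to H^{-1/2}(\partial\Omega)$-boundedness of $T_{\tau,q}\in\Psi^1(\partial\Omega)$, and from below via the G\aa rding-type estimate of Lemma~\ref{controllingnonpositive}, then estimate the $F_q$-remainder with Cauchy--Schwarz and factor out $\|\psi\|_{H^{1/2}(\partial\Omega)}$. You are actually more careful than the printed proof at one point: Lemma~\ref{controllingnonpositive} as stated has un-squared norms (which is dimensionally inconsistent against the quadratic form $\Real\langle\phi,T_{\tau,q}\phi\rangle$), and the paper passes silently from it to the product lower bound $\frac{1}{C}\|\phi\|_{H^{1/2}(\partial\Omega)}\bigl(\|\phi\|_{H^{1/2}(\partial\Omega)}-b\|S_{\tau,q}\phi\|_{L^2(\partial\Omega)}\bigr)$, whereas you correctly read the lemma in its natural quadratic form and make the trade $\|S_{\tau,q}\psi\|_{L^2(\partial\Omega)}^2\leq\|S_{\tau,q}\|_{\mathrm{op}}\|\psi\|_{H^{1/2}(\partial\Omega)}\|S_{\tau,q}\psi\|_{L^2(\partial\Omega)}$ explicit. (The residual cosmetic issue, shared with the paper, is that the derived coefficient in front of $\|F_qf\|_{L^2(\partial\Omega)}$ is $1$ rather than $1/C$; this is immaterial since the lemma is used only on $\ker F_q$.)
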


\begin{proof}
Lemma~\ref{lem:pseudo-zero} states that the operator $T_{\tau,q}$ from 
Lemma~\ref{lem:Tn} is an elliptic pseudo-differential operator of order $1$ 
with positive principal symbol. 
Hence, by Lemma~\ref{controllingnonpositive}, there are constants $b,C>1$ such 
that
\[
\frac1C\|\phi\|_{H^{1/2}(\partial\Omega)}(\|\phi\|_{H^{1/2}(\partial\Omega)}
-b\|S_{\tau,q}\phi\|_{L^2(\partial\Omega)})
\leq \Real\langle \phi,T_{\tau,q} \phi\rangle_{L^2(\partial\Omega)}
\leq C\|\phi\|_{H^{1/2}(\partial\Omega)}^2,
\]
for all $\phi\in H^{1/2}(\partial\Omega)$. Applying the above estimates with 
$\phi=\gamma_0P_qf$
and $f\in L^2(\R^{2d})$, and recalling~\eqref{eq-lem:Tn}, we get
that the double inequality announced in
the above lemma holds for all $f\in L^2(\R^{2d})$ due to Lemma~\ref{lem:Tn} and 
the fact that 
\begin{align*}
\Real\langle \phi, T_{\tau,q}\phi\rangle_{L^2(\partial\Omega)}
&=\langle f,T_qf\rangle+\Real
\langle \phi,F_qf\rangle_{L^2(\partial\Omega)}. 
\end{align*}
\end{proof}

\begin{proof}[Proof of Lemma~\ref{lem:toeplitztva}]
Let $S_{\tau,q}\in \Psi^{-\infty}(\partial\Omega)$ denote the finite rank 
operator from Lemma~\ref{lem:Tn}.
The operator 
$S_{\tau,q}\gamma_0 P_q:L^2(\R^{2d})\to L^2(\partial\Omega)$ 
is a well defined finite rank operator since $S_{\tau,q}$ is finite rank and 
$P_qL^2(\R^{2d})\subseteq C^\infty(\R^{2d})$. Recall the finite rank operator 
$F_K$ on $H^1(K)$ from 
Remark~\ref{splittingremark} and let $F_KP_q:L^2(\R^{2d})\to H^1(K)$ denote the
finite rank operator 
$f\mapsto (P_qf)|_K\mapsto F_K\left[(P_qf)|_K\right]$ which is well defined 
since $(P_qf)|_K\in C^\infty(\overline{K})$.
We define the space 
$\mathcal{W}_q\subseteq \mathcal{L}_q$ by means of
\[
\mathcal{L}_q\ominus \mathcal{W}_q
:=\ker \left(S_{\tau,q}\gamma_0 P_q\right) \cap \ker F_q\cap \ker F_KP_q
\cap \mathcal{L}_q\subseteq \mathcal{L}_q.
\]
The space $\mathcal{W}_q$ is of finite dimension because all operators
$S_{\tau,q}\gamma_0 P_q$, $F_q$ and  $F_KP_q$ are of finite rank.

{\it Step~1. Lower bound.} 

We prove that  the lower bound in~\eqref{eq:inequality} is valid for all 
$f\in \mathcal{W}_q^\perp$. For simplicity we set $\phi:=\gamma_0 f$. 
By the definition of $T_q$ from~\eqref{eq-Tn}, the estimate of 
Lemma~\ref{lem:Tn2} gives,
\begin{displaymath}
\langle f,T_qf\rangle_{L^2(\R^{2d})}\geq
\frac1C\|\phi\|_{H^{1/2}(\partial\Omega)}^2\,.
\end{displaymath}
So it suffices to prove that
\begin{displaymath}
\langle f,S_q^{K}f\rangle_{L^2(\R^{2d})}\leq
C'\|\phi\|_{H^{1/2}(\partial\Omega)}^2\,,
\end{displaymath}
for some positive constant $C'$. Recalling the definition of $S_q^{K}$, and 
using that $\|f\|_{L^2(K)}\leq \|f\|_{H^{1}(K)}$, 
this follows once showing the estimate
\begin{equation}
\label{eq-lem-ppp}
\|f\|_{H^1(K)}\leq C'\|\phi\|_{H^{1/2}(\partial\Omega)}\,.
\end{equation}
Since $(L-\Lambda_q)f=0$ this estimate follows from Remark~\ref{splittingremark}.

\noindent{\it Step~2. Upper bound.}\\
Now we establish the upper bound in~\eqref{eq:inequality}. Let 
$f\in L^2(\R^{2d})$ and $u=P_qf$, the projection of $f$ onto the eigenspace 
$\mathcal L_q$. Notice that the trace 
theorem,~\cite[Theorem $9.4$, Chapter $1$]{lions} gives,
\begin{equation*}
\|\gamma_0 u\|_{H^{1/2}(\partial\Omega)}\leq
 C\|u\|_{H^2(K)}\,,
\end{equation*}
for some positive constant $C$. Notice that $(L-\Lambda_q)u=0$. By the elliptic 
regularity, given a domain
$K_1$ such that $K\subset K_1$, there exists a constant $C_{K_1}$ such that,
\begin{displaymath}
\|u\|_{H^2(K)}
\leq C_{K_1}
\left(\|L_qu\|_{L^2(K_1)}+\|u\|_{L^2(K_1)}\right)=C_{K_1}\|u\|_{L^2(K_1)}.
\end{displaymath}
Summing up, we get,
\begin{displaymath}
\|\gamma_0 P_qf\|_{H^{1/2}(\partial\Omega)}\leq
C\|P_qf\|_{L^2(K_1)}\,,\quad\forall~f\in L^2(\R^{2d})\,.
\end{displaymath}
Substituting the above inequality in the estimate of Lemma~\ref{lem:Tn2}, we 
obtain the upper bound announced in~\eqref{eq:inequality}.
\end{proof}

\appendix

\section{Proof of Lemma~\ref{lem:kernel}}
\label{sec:expansion}

The proof of Lemma~\ref{lem:kernel}, and the expansion of the function $G_0$ 
defined in~\eqref{gzeroexpression}, is based on an expansion of the function 
$I$.

\begin{lemma}
\label{iasymptotic}
The function 
\[
I(s):=\int_0^{+\infty} \frac{e^{-s\coth(t)}}{\sinh^d(t)}\, dt
\]
can be written as
\[
I(s)=I_0(s)+I_{\infty}(s)
\]
where 
\begin{enumerate}
\item $I_0(s),I_\infty(s)=O(e^{-s})$ as $s\to +\infty$.
\item $I_\infty\in C^\infty(\R)$.
\item The function $I_0\in C^\infty(\R_+)$ admits an asymptotic expansion for 
small $s$:
\begin{equation}
\label{izeronasty}
I_0(s)=\sum_{j=1-d}^{{+\infty}} (e^{-sa}c_j-c_j')s^j
-\sum_{j=0}^{+\infty} d_j s^j\log(s),
\end{equation}
where 
\begin{align*}
c_j&:=\begin{dcases}
\smashoperator[r]{\sum_{k=0}^{\lfloor (d-j)/2\rfloor-1}} (-1)^k
\dbinom{\frac{d-2}{2}}{k}
(d-2(k+1))_{j+1}a^{d-2k-1-j},& 1-d\leq j<0,\\
\smashoperator[r]{\sum_{k=\lceil (d-j-1)/2\rceil-1}^{+\infty}} (-1)^k
\dbinom{\frac{d-2}{2}}{k}
\frac{a^{d-2k-1+j}}{(2k+1-d)_{j+1}},& 0\leq j\leq d-2,\\
0,& j>d-2.
\end{dcases}\\
c_j'&:=
\begin{dcases}
\smashoperator[r]{\sum_{\substack{j\geq k\geq 1\\j-k-d \equiv 1 \bmod 2}}}
\frac{(-1)^{(j+k+d-1)/2}a^k}{(j-k)!k\cdot k!} 
\dbinom{\frac{d-2}{2}}{\frac{j-k+d-1}{2}}\\
\qquad +\frac{(-1)^{(j+d-1)/2}}{j!} 
\dbinom{\frac{d-2}{2}}{\frac{j+d-1}{2}}
(\gamma+\log(a)+1),& j-d\equiv 1 \bmod 2, \;j\geq 1,\\
\smashoperator[r]{\sum_{\substack{j\geq k\geq 1 \\ j-k-d\equiv 1\bmod 2}}}
\frac{(-1)^{(j+k+d-1)/2}a^k}{(j-k)!k\cdot k!} 
\dbinom{\frac{d-2}{2}}{\frac{j-k+d-1}{2}}, 
& j-d\equiv 0 \bmod 2, \;j\geq 1.\\
\gamma+\log(a)+1,& \text{$j=0$ and $d$ is odd.}\\
0,& \text{$j=0$ and $d$ is even.}
\end{dcases}\\
d_j&:=\smashoperator{\sum_{\substack{j\geq k\geq 1\\j-k-d\equiv 1\bmod 2}}} 
\frac{(-1)^{(j+k+d-1)/2}a^k}{(j-k)!k\cdot k!} 
\dbinom{\frac{d-2}{2}}{\frac{j-k+d-1}{2}}.
\end{align*}
Here $a=\coth(1)$ and $\gamma$ is the Euler-Mascheroni constant.
\end{enumerate}
\end{lemma}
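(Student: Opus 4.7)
My plan is to split the integral at $t=1$ and set
\[
I_\infty(s) := \int_1^{+\infty} \frac{e^{-s\coth(t)}}{\sinh^d(t)}\, dt,
\qquad
I_0(s) := \int_0^1 \frac{e^{-s\coth(t)}}{\sinh^d(t)}\, dt,
\]
so that items (1) and (2) concerning $I_\infty$ follow at once. On $[1,+\infty)$ the functions $\coth(t)$ and $\sinh^{-d}(t)$ are smooth with $\sinh^{-d}(t)$ decaying exponentially, so differentiation under the integral sign is justified for all $s\in\R$ and yields $I_\infty\in C^\infty(\R)$; further $\coth(t)\geq 1$ there, so $|I_\infty(s)|\leq e^{-s}\int_1^{+\infty}\sinh^{-d}(t)\,dt$. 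The bound $I_0(s)=O(e^{-sa})$ as $s\to +\infty$ follows analogously from $\coth(t)\geq a$ on $(0,1]$ and the integrability of $e^{-s\coth(t)}\sinh^{-d}(t)$ near $t=0$.

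For the expansion of $I_0$ as $s\to 0^+$ I would substitute $u=\coth(t)$, which gives $du=-\sinh^{-2}(t)\,dt$ and $\sinh^d(t)=(u^2-1)^{-d/2}$, reaching
\[
I_0(s) = \int_a^{+\infty} e^{-su}(u^2-1)^{(d-2)/2}\,du.
\]
Writing $(u^2-1)^{(d-2)/2}=u^{d-2}(1-u^{-2})^{(d-2)/2}$ and expanding by the binomial series
\[
(1-u^{-2})^{(d-2)/2}=\sum_{k\geq 0}(-1)^k\binom{(d-2)/2}{k}u^{-2k},
\]
which terminates at $k=(d-2)/2$ when $d$ is even and converges absolutely on $u>1$ when $d$ is odd, reduces the problem to the integrals
\[
J_m(s):=\int_a^{+\infty} e^{-su}u^m\,du,\qquad m=d-2-2k.
\]
In the odd-$d$ case I would justify the interchange of summation and integration by truncating at a large $N$ and showing that the integral of the resulting remainder is smooth in $s$ and contributes only to a high-order tail of the asymptotic expansion.

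Each $J_m(s)$ is then expanded in one of two ways. For $m\geq 0$, which occurs only for the finitely many $k\leq (d-2)/2$ (present only when $d\geq 2$), iterated integration by parts yields
\[
J_m(s)=e^{-sa}\sum_{l=0}^{m}\frac{m!\,a^{m-l}}{(m-l)!\,s^{l+1}},
\]
supplying exactly the $e^{-sa}c_j s^j$ terms with $1-d\leq j\leq -1$. For $m=-n\leq -1$, which occurs only when $d$ is odd, one rewrites $J_{-n}(s)=a^{1-n}E_n(sa)$ and invokes the standard small-argument expansion
\[
E_n(x)=\frac{(-x)^{n-1}}{(n-1)!}\bigl(\psi(n)-\log x\bigr)+\sum_{k\neq n-1}\frac{(-x)^k}{(k-n+1)\,k!},
\]
in which $\log(sa)=\log(s)+\log(a)$ produces the $d_j s^j\log(s)$ terms, while the remaining constants (containing $\psi(n)=-\gamma+\sum_{1}^{n-1}1/k$ and $\log(a)$) produce the $c_j'$ part; the universal constant $\gamma+\log(a)+1$ visible in the stated formulas arises precisely from this pairing of $-\log(sa)$ with the $\psi(n)$ term together with the $l=0$ contribution from the $m\geq 0$ branch at neighbouring indices.

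The main obstacle is the combinatorial bookkeeping: reindexing the double sum (over the binomial index $k$ and the integration-by-parts or exponential-integral expansion index) so that the coefficient of a fixed power $s^j$ collects exactly the stated finite sum defining $c_j$, $c_j'$, $d_j$. The parity constraint $j-d\equiv 1\pmod{2}$ appearing in the formulas for $c_j'$ and $d_j$ reflects the fact that $m=d-2-2k$ has the parity of $d$, so logarithms emerge only in the odd-$d$ regime, exactly as the binomial $\binom{(d-2)/2}{\,\cdot\,}$ with half-integer top argument dictates. The analytical ingredients—smoothness of $I_\infty$, the substitution, and the incomplete gamma / exponential integral expansions—are all standard.
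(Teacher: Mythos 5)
Your proposal matches the paper's proof essentially step for step: the same split at $t=1$, the same substitution $u=\coth(t)$ yielding $\int_a^{+\infty}e^{-su}(u^2-1)^{(d-2)/2}\,du$, the same binomial expansion in $u^{-2}$, and the same reduction to incomplete-gamma-type integrals $\int_a^{+\infty}e^{-su}u^m\,du$ handled by integration by parts for $m\geq 0$ and by the standard exponential-integral series for $m<0$ (the paper packages these as $g_m(t)=\int_t^{+\infty}e^{-u}u^m\,du$, which is your $J_m$ up to the rescaling $J_m(s)=s^{-m-1}g_m(sa)$). The remaining difference is purely notational, and your description of the final reindexing mirrors the paper's "rearranging these terms" step.
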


\begin{proof}
We write
\[
I_0(s):=\int_0^1 \frac{e^{-s\coth(t)}}{\sinh^d(t)} \,dt
\quad\mbox{and}\quad I_\infty(s):=\int_1^{+\infty}  
\frac{e^{-s\coth(t)}}{\sinh^d(t)}\,d t.
\]
It is easily verified that $I_\infty\in C^\infty(\R)$. Since 
$\coth(t)\geq 1$ for $t\geq 1$, $I_\infty(s)=O(e^{-s})$ as 
$s\to +\infty$. Since $\coth(t)\geq 6/5$ for $t\in[0, 1]$, 
$I_0(s)=O(e^{-s})$ as 
$s\to +\infty$ follows once one notice that the singularity at $t=0$ from the 
$\sinh^d(t)$ is canceled by $e^{-s\coth(t)/6}$. 

Let us turn to the unpleasant computation of $I_0$. After the change 
of variables $u=\coth(t)$ the integral defining $I_0$ transforms to
\[
I_0(s)=\int_a^{+\infty} e^{-su}(u^2-1)^{\frac{d-2}{2}}\, d u
=\int_a^{+\infty} e^{-su}u^{d-2}
\Bigl(1-\frac{1}{u^{2}}\Bigr)^{\frac{d-2}{2}}\, d u.
\]
Here $a:=\coth(1)>1$. Using the Taylor expansion of 
$\left(1-\frac{1}{u^{2}}\right)^{\frac{d-2}{2}}$, which converges uniformly 
for $u\geq \coth(1)$, we arrive at the identity
\begin{align}
\label{izeroaftertaylor}
I_0(s)&=\sum_{k=0}^{+\infty} (-1)^k
\dbinom{\frac{d-2}{2}}{k}
\int_a^{+\infty} e^{-su} u^{d-2(k+1)}\, d u\\
\nonumber
&=\sum_{k=0}^{+\infty} (-1)^k
\dbinom{\frac{d-2}{2}}{k}
s^{2k+1-d}g_{d-2(k+1)}(sa).
\end{align}
where
\[
g_m(t):=\int_{t}^{+\infty} e^{-u} u^{m}\, d u.
\]
After an integration by parts, one arrives at the identity
\[
g_m(t)=
\begin{dcases}
e^{-t}\sum_{j=0}^m(m)_{j+1} t^{m-j},& m\geq0,\\
e^{-t}\sum_{j=1}^{-m-1}\frac{t^{m+j}}{(-m-1)_{j}}
-\frac{\gamma+\log(t)
+\sum_{j=0}^{+\infty} (-1)^j\frac{t^j}{j\cdot j!}}{(-m-1)!},& m<0.
\end{dcases}
\]
Here $(m)_l:=m(m-1)\cdots (m-l+1)$ denotes the Pochhammer symbol. Putting this 
into~\eqref{izeroaftertaylor}, 
\begin{align*}
I_0(s)=&\sum_{k=0}^{\lfloor (d-2)/2\rfloor}
\sum_{j=0}^{d-2(k+1)} (-1)^k 
\dbinom{\frac{d-2}{2}}{k}
(d-2(k+1))_j e^{-sa} a^{d-2(k+1)-j}s^{-j-1}\\
&+\sum_{k=\lfloor d/2\rfloor}^{+\infty} (-1)^k
\dbinom{\frac{d-2}{2}}{k}
e^{-sa}
\sum_{j=1}^{2(k+1)-d} \frac{a^{d-2(k+1)-j}}{(2k+1-d)_j}s^{j-1}\\
&-\sum_{k=\lfloor d/2\rfloor}^{+\infty} (-1)^k
\dbinom{\frac{d-2}{2}}{k}
\frac{s^{2k+1-d}(\gamma+\log(a)+1)+s^{2k+1-d}\log(s)}{(2k+1-d)!}\\
&-\sum_{k=\lfloor d/2\rfloor}^{+\infty} 
\sum_{j=1}^{+\infty}(-1)^{k+j}
\dbinom{\frac{d-2}{2}}{k}
\frac{a^j}{j\cdot j!}\frac{s^{2k+1-d+j}}{(2k+1-d)!}.
\end{align*}
Rearranging these terms leads to the expression~\eqref{izeronasty}.
\end{proof}

\begin{proof}[Proof of Lemma~\ref{lem:kernel}]
Using Lemma~\ref{iasymptotic}, we have that 
\begin{align*}
G_0(z,\zeta)&=\sum_{j=1-d}^{{+\infty}} \frac{b^{j+d-1}}{2^{2d+2j-1}\pi^d} 
e^{b(\overline{z}\cdot \zeta-\overline{\zeta}\cdot z)/4}
\bigl(e^{-ab|z-\zeta|^2/4}c_j-c_j'\bigr)|z-\zeta|^{2j}\\
&\qquad-\sum_{j=0}^{+\infty} d_j  \frac{b^{j+d-1}}{2^{2d+2j-1}\pi^d} 
e^{b(\overline{z}\cdot \zeta-\overline{\zeta}\cdot z)/4}|z-\zeta|^{2j}
\log\Bigl(\frac{b|z-\zeta|^2}{4}\Bigr)\\
&\qquad + \frac{2b^{d-1}}{(4\pi)^d} 
e^{b(\overline{z}\cdot \zeta-\overline{\zeta}\cdot z)/4}
I_\infty\Bigl(\frac{b|z-\zeta|^2}{4}\Bigr).
\end{align*}
From these expressions, the Lemma follows.
\end{proof}

\section*{Acknowledgements}
The authors wish to thank Grigori Rozenblum who suggested the approach we used 
from the theory of pseudo-differential operators.
Part of this work has been prepared in the Erwin Schr\"odinger 
Institute (ESI) - Vienna which is gratefully acknowledged. The authors MG and 
MPS also wish to thank the 
Mittag--Leffler institute for a productive stay in 2012. AK is supported by 
a grant from Lebanese University.


\begin{thebibliography}{10}

\bibitem{AfHe}
A.~Aftalion and B.~Helffer.
\newblock On mathematical models for {B}ose-{E}instein condensates in optical
  lattices.
\newblock {\em Rev. Math. Phys.}, 21(2):229--278, 2009.

\bibitem{agrruss}
M.~S. Agranovich.
\newblock {\em Sobolev spaces and their generalizations elliptic problems in
  domains with smooth and Lipschitz boundary}.
\newblock Moscow Center for Continuous Mathematical Education, 2013.
\newblock Russian.

\bibitem{agr2}
M.~S. Agranovich, B.~Z. Katsenelenbaum, A.~N. Sivov, and N.~N. Voitovich.
\newblock {\em Generalized method of eigenoscillations in diffraction theory}.
\newblock WILEY-VCH Verlag Berlin GmbH, Berlin, 1999.
\newblock Translated from the Russian manuscript by Vladimir Nazaikinskii.

\bibitem{birman61a}
M.~{\v{S}}. Birman.
\newblock On the spectrum of singular boundary-value problems.
\newblock {\em Mat. Sb. (N.S.)}, 55 (97):125--174, 1961.

\bibitem{birman61b}
M.~{\v{S}}. Birman.
\newblock Perturbation of the spectrum of a singular elliptic operator under
  variation of the boundary and boundary conditions.
\newblock {\em Soviet Math. Dokl.}, 2:326--328, 1961.

\bibitem{biso}
M.~S. Birman and M.~Z. Solomjak.
\newblock {\em Spectral theory of selfadjoint operators in {H}ilbert space}.
\newblock Mathematics and its Applications (Soviet Series). D. Reidel
  Publishing Co., Dordrecht, 1987.
\newblock Translated from the 1980 Russian original by S. Khrushch\"ev and V.
  Peller.

\bibitem{baer2}
A.~Erd{\'e}lyi, W.~Magnus, F.~Oberhettinger, and F.~G. Tricomi.
\newblock {\em Higher transcendental functions. {V}ols. {I}, {II}}.
\newblock McGraw-Hill Book Company, Inc., New York-Toronto-London, 1953.
\newblock Based, in part, on notes left by Harry Bateman.

\bibitem{baer1}
A.~Erd{\'e}lyi, W.~Magnus, F.~Oberhettinger, and F.~G. Tricomi.
\newblock {\em Tables of integral transforms. {V}ol. {I}}.
\newblock McGraw-Hill Book Company, Inc., New York-Toronto-London, 1954.
\newblock Based, in part, on notes left by Harry Bateman.

\bibitem{fipu}
N.~Filonov and A.~Pushnitski.
\newblock Spectral asymptotics of {P}auli operators and orthogonal polynomials
  in complex domains.
\newblock {\em Comm. Math. Phys.}, 264(3):759--772, 2006.

\bibitem{FH}
S.~Fournais and B.~Helffer.
\newblock {\em Spectral {M}ethods in {S}urface {S}uperconductivity}, volume~77
  of {\em Progress in Nonlinear Differential Equations and Their Applications}.
\newblock Birkh\"auser, 2010.

\bibitem{FK}
S.~Fournais and A.~Kachmar.
\newblock On the energy of bound states for magnetic {S}chr\"odinger operators.
\newblock {\em J. Lond. Math. Soc. (2)}, 80(1):233--255, 2009.

\bibitem{Fr}
R.~L. Frank.
\newblock On the asymptotic number of edge states for magnetic {S}chr\"odinger
  operators.
\newblock {\em Proc. Lond. Math. Soc. (3)}, 95(1):1--19, 2007.

\bibitem{noffnoff}
P.~Grisvard.
\newblock {\em Elliptic problems in nonsmooth domains}, volume~24 of {\em
  Monographs and Studies in Mathematics}.
\newblock Pitman (Advanced Publishing Program), Boston, MA, 1985.

\bibitem{HM}
B.~Helffer and A.~Morame.
\newblock Magnetic bottles in connection with superconductivity.
\newblock {\em J. Funct. Anal.}, 185(2):604--680, 2001.

\bibitem{horIII}
L.~H{\"o}rmander.
\newblock {\em The analysis of linear partial differential operators. {III}}.
\newblock Classics in Mathematics. Springer, Berlin, 2007.
\newblock Pseudo-differential operators, Reprint of the 1994 edition.

\bibitem{hosm}
K.~Hornberger and U.~Smilansky.
\newblock Magnetic edge states.
\newblock {\em Phys. Rep.}, 367(4):249--385, 2002.

\bibitem{Kjmp}
A.~Kachmar.
\newblock On the ground state energy for a magnetic {S}chr\"odinger operator
  and the effect of the {D}e{G}ennes boundary condition.
\newblock {\em J. Math. Phys.}, 47(7):072106, 32, 2006.

\bibitem{Krmp}
A.~Kachmar.
\newblock Weyl asymptotics for magnetic {S}chr\"odinger operators and de
  {G}ennes' boundary condition.
\newblock {\em Rev. Math. Phys.}, 20(8):901--932, 2008.

\bibitem{lions}
J.-L. Lions and E.~Magenes.
\newblock {\em Non-homogeneous boundary value problems and applications. {V}ol.
  {I}}.
\newblock Springer-Verlag, New York, 1972.
\newblock Translated from the French by P. Kenneth, Die Grundlehren der
  mathematischen Wissenschaften, Band 181.

\bibitem{mero}
M.~Melgaard and G.~Rozenblum.
\newblock Eigenvalue asymptotics for weakly perturbed {D}irac and
  {S}chr\"odinger operators with constant magnetic fields of full rank.
\newblock {\em Comm. Partial Differential Equations}, 28(3-4):697--736, 2003.

\bibitem{P}
M.~Persson.
\newblock Eigenvalue asymptotics of the even-dimensional exterior
  {L}andau-{N}eumann {H}amiltonian.
\newblock {\em Adv. Math. Phys.}, pages Art. ID 873704, 15, 2009.

\bibitem{puro}
A.~Pushnitski and G.~Rozenblum.
\newblock Eigenvalue clusters of the {L}andau {H}amiltonian in the exterior of
  a compact domain.
\newblock {\em Doc. Math.}, 12:569--586, 2007.

\bibitem{shubin}
M.~A. Shubin.
\newblock {\em Pseudodifferential operators and spectral theory}.
\newblock Springer-Verlag, Berlin, second edition, 2001.
\newblock Translated from the 1978 Russian original by Stig I. Andersson.

\bibitem{si2}
B.~Simon.
\newblock {\em Functional integration and quantum physics}, volume~86 of {\em
  Pure and Applied Mathematics}.
\newblock Academic Press Inc. [Harcourt Brace Jovanovich Publishers], New York,
  1979.

\bibitem{simon}
B.~Simon.
\newblock {\em Trace ideals and their applications}, volume 120 of {\em
  Mathematical Surveys and Monographs}.
\newblock American Mathematical Society, Providence, RI, second edition, 2005.

\bibitem{tay2}
M.~E. Taylor.
\newblock {\em Partial differential equations. {II}}, volume 116 of {\em
  Applied Mathematical Sciences}.
\newblock Springer-Verlag, New York, 1996.
\newblock Qualitative studies of linear equations.

\end{thebibliography}
\end{document}